\documentclass{article}
\usepackage{amssymb}
\usepackage{mathtools}
\usepackage{enumitem}
\usepackage{amsthm}
\usepackage{amsopn}
\usepackage{bbm}
\usepackage{dsfont}

\newtheorem{theorem}{Theorem}

\newtheorem{proposition}[theorem]{Proposition}
\newtheorem{remark}[theorem]{Remark}

\newcommand{\vs}{\vskip.075in}
\newcommand{\R}{\mathbb{R}}

\newcommand{\dint}{\displaystyle \int}
\newcommand{\ds}{\displaystyle}
\newcommand{\be}{\begin{equation}}
\newcommand{\ee}{\end{equation}}
\newcommand{\dd}{\mathrm{d}}
\newcommand{\bee}{\begin{equation*}}
\newcommand{\eee}{\end{equation*}}

\begin{document}

\title{On the optimal portfolio problem with partial information and related
mean field games\thanks{%
The work was presented at conferences and seminars at CIRM (Centre
International de Rencontres Mathematiques), IMSI (Institute of Mathematical
and Statistical Innovation) at U. Chicago, Columbia University, University
of Oxford and University of Paris, VII. The authors would like to thank the
participants for their comments and suggestions, and in particular, A.
Capponi, R. Carmona, R. Cont, I. Karatzas, M. Nutz, N. Touzi and X. Y. Zhou.} \thanks {The first author was partially supported by the National Foundation grants
DMS-2153822 and DMS-2452972.}}
\author{Panagiotis E. Souganidis\thanks{%
Department of Mathematics, University of Chicago; souganidis@uchicago.edu} and
Thaleia Zariphopoulou\thanks{%
Departments of Mathematics and IROM, McCombs School of Business, The
University of Texas at Austin, and the Oxford-Man Institute, University of
Oxford; zariphop@math.utexas.edu}}
\maketitle

\begin{center}
\textbf{Abstract}
\end{center}

We study optimal portfolio choice models in markets with partial information about the stock’s drift. 
We first solve the single agent problem for general utilities using a new approach that yields regularity 
of the value function and closed-form expressions for the optimal processes. 
We then consider a $N-$player game in which players interact through the law of peers’ wealth and 
study  its mean field limit.  
This leads to a mean field game  with 
common noise in a reduced,
complete information market with unbounded controls in both the drift and
the volatility. For its solution, we derive a new master system, comprised by
the master equation and an optimality condition for the candidate mean field
equilibrium control. We analyze the cases of separable couplings and general
utilities.  Using insights from indifference valuation, we represent the value of the game as a compilation of the
solution to the problem of a  single player without competition  and a function solving a non-local
quasilinear pde in the space of measures.  When the couplings 
depend  only on the average of  peers' wealth, we derive explicit solutions and various 
regularity results for  representative cases.

\vs
\textbf{Key words}:\ Mean field games, master equation, unbounded controlled
common noise, portfolio choice, partial information, relative performance, indifference valuation, 
arbitrage-free pricing. 

\section{Introduction}

The paper contributes to the areas of optimal portfolio management with
partial information and to mean field games (MFG) in such markets with
common noise and unbounded controls. In the first part of the work, we
revisit the single agent (no competition)\ expected utility problem of
terminal wealth. Under general utilities, we produce regularity and
new representation formulae for the value function, and the optimal wealth and
portfolio processes. The results fill a gap between existing explicit
expressions for special (homothetic)\ utilities and abstract results derived
with general duality arguments. In the second part of the paper, we
introduce a $N-$player game and its continuous limit, where the agents
interact because of relative performance concerns involving the law of
peers' wealth. This leads to a mean field game with 
common noise in a reduced,
complete information market with unbounded controls in both the drift and
the volatility. For its solution, we derive a new master system, comprised by
the master equation and an optimality condition for the candidate mean field
equilibrium control. 
\vs
We analyze the cases of separable couplings and general
utilities. Using insights from indifference valuation, we view the value of the game 
as the writer's value of a claim given by the coupling at the optimum. This guides us to
 represent the value of the game as a compilation of the
solution to the single player problem without competition and a function solving a non-local
quasilinear pde in the space of measures. When the couplings 
depend  only on the average of  peers' wealth, we derive explicit solutions and various 
regularity results for  representative cases.
\vs
The underlying market model consists of a riskless bond (of zero interest
rate)\ and a stock of price $S$ with unknown drift $\Theta $, partially
observed through an observations process $Y$. Classical results from
filtering lead to a reduced, complete information market model with the
stock and the observations process solving%
\begin{equation}\label{S-Y-intro}
\begin{cases}
dS_{s}=b\left( Y_{s},s\right) S_{s}ds+S_{s}dW_{s} \ \ \text{in} \ \ (0,T] \ \ \text{and} \ \ S_{0}=S>0, \\[1.2mm]
dY_{s}=b\left(Y_{s},s\right) ds+dW_{s} \ \ \text{in} \ \ (0,T] \ \ \text{and} \ \ Y_{0}=y \in \mathbb{R}.
\end{cases}
\end{equation}%

The drift process $$b\left( Y_{s},s\right) =\mathbb{E}\left[ \left. \Theta
\right \vert \mathcal{F}_{s}^{Y}\right],$$
is known as  the best estimator $\hat{\Theta}$
given the observations from $Y,$ and the innovation process $W$ is a
Brownian motion defined on $\left[ 0,T\right] $ as 
$$W_{s}=Y_{s}-\int_{0}^{s}\hat{\Theta}_{u}\dd u. $$ %
In this market, we first consider the maximal expected utility problem of a single agent in the absence 
of competition. The state controlled process $\mathcal{X}$ models her 
wealth of the (single) agent and satisfies 
\be\label{takis103}
d\mathcal{X}_{s}=b\left( Y_{s},s\right) \alpha _{s}ds+\alpha _{s}dW_{s} \ \ \text{in} \ \ (t,T] \ \ \text{and} \ \ %
\mathcal{X}_{t}=x\in \mathbb{R},
\ee%
with $\alpha $ being the control policy, representing the amount invested in
stock, which is assumed to be measurable only with respect to $Y;$ we denote 
the set of such controls as $\mathcal{A}$. 
\vs
The value function is
defined as 
\begin{equation}
u\left( x,y,t\right) =\sup_{\alpha \in \mathcal{A}}\mathbb{E}\left[ \left. J(%
\mathcal{X}_{T})\right \vert \mathcal{X}_{t}=x,Y_{t}=y\right] \text{ \ in }%
\mathbb{R\times R\times }\left[ 0,T\right] ,  \label{u-intro}
\end{equation}%
with $J$ being the agent's utility at horizon $T$. 
\vs
We establish that $u$ is
the unique smooth solution of the associated HJB\ equation,

\be \label{HJB-intro}
u_{t} -\frac{\left(
bu_{x}+u_{xy}\right) ^{2}}{2u_{xx}} +\frac{1}{2}u_{yy} +bu_{y} =0 \text{ \ in \ }\mathbb{R\times R\times }\left[ 0,T\right), 
\end{equation}%

with terminal condition
\be\label{takis1.11}
 u(x, y,T)=J(x).
 \ee
The optimal feedback control $a^{\ast }:\R\times\R\times [0,T]\to \R$ is
given by%
\be
a^{\ast }=-\frac{b u_{x}+u_{xy}}{u_{xx}}, 
\label{feedback-intro}
\end{equation}%
%
and satisfies an autonomous equation (see  (\ref{R-equation})). 
\vs
We show that
the optimal wealth and portfolio processes $\mathcal{X}^{\ast \text{ }}$
and $\alpha ^{\ast }$ are represented in the closed-form, 
\begin{equation}
\mathcal{X}_{s}^{\ast \text{ }}=H(H^{(-1)}(x,y,t)+L_{t,s},Y_{s},s) \ \ \text{in} \ \ (t,T] \ \ %
\text{and} \ \ \mathcal{X}_{t}^{\ast \text{ }}=x,
\label{wealth-intro}
\end{equation}%
and%
\begin{equation}\label{alpha-intro}
\begin{split}
&\alpha _{s}^{\ast \text{ }}=c(Y_{s},s)H_{z}(H^{(-1)}(x,y,t)+L_{t,s},Y_{s},s)\\
&\qquad +H_{y}(H^{(-1)}(x,y,t)+L_{t,s},Y_{s},s)\text{ \ in \  } (t, T],
\end{split}
\ee
where the inverse functions are with respect to the $x-$argument. 
\vs
The
process $L$ is defined as 
\begin{equation}\label{L-intro}
L_{t,s}=\int_{t}^{s}b(Y_{\rho },\rho )c(Y_{\rho },\rho )d\rho
+\int_{t}^{s}c(Y_{\rho },\rho )dW_{\rho } \  \ \text{in } \  (t, T] \ \ \text{and} \ \ L_{t,t}=0,
\end{equation}%
and the auxiliary function $H$ solves the linear pde
\begin{equation}
H_{t}+\frac{1}{2}c^{2}H_{zz}+c H_{zy}+\frac{1}{2}H_{yy}=0\text{ \
in }\mathbb{R\times R\times }\left[ 0,T\right) ,  \label{H-intro}
\end{equation}%
with terminal condition
\be\label{takis2}
H(z,y,T)=\left( J^{\prime }\right) ^{\left( -1\right) }(e^{-z}), 
\ee
where  $c:\R\times [0,T]\to \R$ is  given by 
\begin{equation} \label{c-k.1}
c=b+k_{y},
\ee
and $k$ solves 
\be\label{c-k.2}
k_{t}+\frac{1}{2}k_{yy}=\frac{1}{2}b^{2}\  \ \ \text{in } \ \ \mathbb{R\times }\left[ 0,T\right] \ \text{and} \ k(y,T)=0. 
\end{equation}%

It follows that (\ref{wealth-intro})\ yields a closed-form expression for 
the value function, that is, 
\begin{equation}
u(x,y,t)=\mathbb{E}\left[ \left. J\left(
H(H^{(-1)}(x,y,t)+L_{t,T},Y_{T},T)\right) \right \vert X_{t}=x,Y_{t}=y\right].
 \label{u-explicit-intro}
\end{equation}%
To the best of our knowledge, formulae (\ref{wealth-intro}), (\ref{alpha-intro})\ and (%
\ref{u-explicit-intro}), and the regularity results we produce herein are
entirely new when the utility is arbitrary.
\vs
Optimal portfolio problems in high-dimensional complete markets, as the
market model (\ref{S-Y-intro}), have been extensively studied using duality
techniques. While duality is applicable in general semimartingale models, it
is usually difficult, if possible at all, to obtain regularity and
closed-form solutions unless one works with homothetic utilities. In
Markovian models, duality arguments show that the inverse marginal value function satisfies a linear pde,
which follows directly from the HJB
equation and Fenchel transform. On the other hand, as we comment further in
Section~2, the emerging linear pde is not uniformly elliptic, and thus it is
not a priori known if smooth solutions exist. To our knowledge, several
questions in this direction remain open. 
\vs
Herein, we develop an alternative approach that addresses these issues and,
furthermore, produces tractable, closed-form solutions for general
utilities beyond the homothetic ones. Our method  is based on a different
linearization which involves a pair of candidate solutions, and not just a
single equation as in the classical duality theory. Still, one of the emerging
equations is not uniformly elliptic, which we study  by first looking
at the driftless case $\left( b\equiv 0\right)$ and a modified utility. Structurally, the
linearization resembles the one introduced in Musiela and Zariphopoulou \cite{Musiela-Za-monotone} for
an one-dimensional ill-posed HJB equation arising in time-monotone forward
utilities. However, here  we deal with a two-dimensional well-posed linear
pde which requires entirely different arguments. 
\vs

The second part of the paper considers a $N-$player game with relative
performance criteria in the common market (see \eqref{S-Y-intro}). This is
an important area of research in  financial economics and applied finance on optimal
allocation/fund management problems with relative performance concerns, since 
fund management is always performed and evaluated in relation to a benchmark
(index, returns of competitors, clustered financial targets, etc.). 
%
The
prevailing way to classify these models is based on whether agents compete
while investing in a same, common market (asset diversification) or, more
generally, also include individual assets which are inaccessible to their
competitors (asset specialization). For both categories, the existing
applied papers primarily consider only two player games, single period
models, and linear or quadratic criteria (see, among others, \cite{Agarwal}, 
\cite{Basak}, \cite{Basak1}, \cite{Boyle}, \cite{Brown}, \cite{DeMarzo}, \cite{Demarzo2}, 
\cite{Kempf1}, \cite{Kempf2}, \cite{Mitton}, \cite{Sirri} and \cite{Uppal}).
\vs
The literature in continuous time is\textbf{\ }relatively recent. Espinoza and Touzi
introduced in \cite{Espinoza-Touzi} a $N-$player asset
diversification game for players with common exponential utility and linear
competition functions. The work was
extended in a two-player game by  Anthropelos, Geng and Zariphopoulou \cite{Anthropelos-Geng-Z} under forward
criteria.  Lacker and the second author \cite{Lacker-Z.} provided the
first mean field game formulation under asset specialization but, still,
under linear competition and exponential utilities. They, also, studied the
special case of power utilities and geometric competition function, and
non-negative wealth constraints. The MFG in \cite{Lacker-Z.} was defined
probabilistically and, for both cases, static (random) equilibria were
constructed when both common noise (common assets) and individual noise
(specialized assets) were included. Stemming from \cite{Lacker-Z.}, a
substantial literature in MFG\ has been produced allowing, among others, for
intermediate consumption, external habit formation, Ito-diffusion dynamics,
relaxed controls, and mean-variance criteria (see, for example, \cite{bauerle}, \cite{Bo et al},
\cite{Guo-MFG-Regularization}, \cite{Fu-Zhou},  \cite{Kraft}, \cite{Lacker-Soret}, \cite{Zariphopoulou-MFG}   
and others).
\vs

Despite the generality of the market models, all existing works considered
only two pairs of utilities and couplings, specifically, either exponential
utilities and linear couplings (in an infinite wealth domain) or power  utilities and multiplicative couplings (in semi-finite wealth
domain). The only work that incorporated both general utilities and general
couplings on the mean of the peer aggregate wealth  is by the authors in 
\cite{Souganidis-Z-MFG}. Subsequently, in the context of forward criteria, 
\cite{Zariphopoulou-MFG} allowed for general time-monotone forward processes
and linear couplings.
\vs

Another modeling framework that has not been adequately studied, if at all,
in MFG with relative performance is partial information. Frequently, the
drift of the traded asset is not fully known in contrast to the volatility
that is easier to estimate. Portfolio models under partial information (and
in the absence of competition) have been well studied; see, among others, \cite{Bjork-Davis-L}, 
\cite{Brendl}, \cite{Danilova},  \cite{Karatzas-Zhao} and \cite{Lakner}. %
Providing complete bibliography is beyond the scope herein). However, to
our knowledge, with the exception of Deng, Su and Zhou \cite{Deng} and Zhang and Huang \cite{zhang-huang} for
rather special cases, and Huang and Sun \cite{Huang-Sun} for a mean-variance game with
finite players, the work in partial information in MFG with relative
performance is rather limited. MFG with partial
information have been considered by various authors; see, for example, Benoussan and Yam
\cite{Benoussan-Lam}, Sen and Caines \cite{Sen-Caines} and  Shmaya and Zilotto \cite{Shmaya-Zilotto}. In all 
these works, controls were allowed only in the drift and, furthermore,
assumed to be in a compact admissible set. However, optimal portfolio models
have, inherently, controls appearing in both the drift and the volatility,
and, in addition, these controls are in general unbounded. Finally, all works
on MFG with relative performance assume couplings that depend only on the mean
of peer's aggregate wealth (or its geometric analogue when the utility is
power/logarithmic). Herein, we depart from this assumption and allow for
general dependence on the law of peers' wealth. We make all
the above precise next.
\vs
We build on the $N-$player game and its MFG analogue developed in \cite{Souganidis-Z-MFG} 
and introduce their extensions within market models as in  \eqref{S-Y-intro}. 
\vs
The value function of the $i^{th}-$player is defined as the
terminal expected reward,%
\be
\begin{split}
&u^{N,i}\left( x_{1},...,x_{N},y,t\right) =\\
&\sup_{\pi _{i}\in \mathcal{A}}\mathbb{E}\Big[ J\Big( X_{T}^{N,i},%
\dfrac{1}{N-1}\sum \limits_{j=1,j\neq i}^{N}\delta _{X_{T}^{N,j}}\Big)
 \vert X_{t}=(x_{1},...,x_{N}),Y_{t}=y\Big] , \label{v-i}
\end{split} 
\end{equation}%
where  $(x_{1},...,x_{N})\in \mathbb{R}^{N},$ $y\in \mathbb{R}$ and the
payoff function $J$ is assumed to be  the common utility of all players. 
The
controlled state processes $X^{N,i}$ follow  \eqref{takis103} and
the observations process $Y$ is  as in \eqref{S-Y-intro}. 
\vs
The above value
functions are expected to satisfy their related Hamilton Jacobi Bellman
(HJB)\ equations (see  \eqref{HJB-finite}), which in turn lead to a linear system
for the candidate optimal feedback policies (see \eqref{optN}). This
system, however, is non-tractable due to the interlinked dependence of
controls and value functions and their derivatives, for which even
regularity results are lacking. This motivates us to consider the limiting
case, as the number of players goes to infinity.
\vs
Passing formally to the limit, we derive a system of two equations satisfied by the value function 
$U:\R\times\R\times\mathcal{P}_2\times [0,T]\to \R$ and the optimal strategy $\pi^\ast:\R\times\R\times\mathcal{P}_2\times [0,T]\to \R$, where $\mathcal{P}_2$ is the space of probability measures on $\R$ with finite second moment. 
\vs
The first equation is the master equation
\be\label{master-intro}
\begin{split}
&U_{t} +\dfrac{1}{2}  \pi ^{\ast 2}U_{xx}  + \pi ^{\ast } (b U_{x}   +U_{xy} )
+\pi^\ast \mathcal{L}_1U + \mathcal{L}_2 U + b\mathcal{L}_3U + \mathcal{L}_4 U\\[1.5mm]
&+ \frac{1}{2}U_{yy} + bU_{y}=0 \text{ \   in \ }\mathbb{R\times R\times }\mathcal{P}_{2}\times \left[ 0,T\right), 
\end{split}
\ee
with terminal condition
 \be\label{terminal2}
 U(x,y,m,T)=J(x,m),
 \ee
where, for $v:\R\times\R\times \mathcal{P}_{2}\times \left[ 0,T\right)\to \R$, we write
\be\label{takis1000.0}
\mathcal{L}_1v(x,y,m.t)=\int \pi ^{\ast }\left( z,y,m,t\right)
v_{xm}\left( x,y,m,z,t\right) \dd m\left( z\right),
\ee
\be\label{takis1000}
\begin{split}
&\mathcal{L}_2 v(x,y,m,t)=\\[1.5mm]
&\hskip-.1in \dfrac{1}{2}\int \int \pi ^{\ast }\left( z_{1},y,m,t\right) \pi ^{\ast
}(z_{2},y,m,t)v_{mm}\left( x,y,m,z_{1},z_{2},t\right) \dd m\left( z_{1}\right)
\dd m\left( z_{2}\right)\\[1.5mm] 
&+\frac{1}{2}\int \left( \pi ^{\ast }(z,y,m,t)\right) ^{2}v_{mz}\left(
x,y,m,z,t\right) \dd m\left( z\right), 
\end{split}
\ee
\be\label{takis1000.00}
\mathcal{L}_3v(x,y,m.t)=\int \pi ^{\ast }(z,y,m,t)v_{m}\left( x,y,m,z,t\right) \dd m(z),     
\ee
and
\be\label{takis1000.000}
\mathcal{L}_4 v(x,y,m.t)=\int \pi^{\ast }(z,y,m,t)v_{ym}\left( x,y,m,z,t\right) \dd m(z). 
\ee

The second equation of the master system is  the optimality/compatibility condition%
\begin{equation}\label{pi-intro}
\pi ^{\ast } U_{xx} + \mathcal{L}_1 U =-(bU_{x}
+U_{xy}) \text{ \ in \ }\mathbb{%
R\times R\times }\mathcal{P}_{2}\times \left[ 0,T\right].\\
\ee
\vs
The master system is beyond the scope of the current MFG theory; see, Lasry and Lions \cite{Lasry-Lions-I, Lasry-Lions=II},  Lions \cite{Lions}, Huang, Malhame and Caines \cite{Huang-Malhame-Caines},  Carmona and Delarue \cite{Carmona-Delarue} and Cardaliaguet, Delarue, Lasry and Lions \cite{Cardaliaguet}. Indeed, so far,  the existing theory deals with   bounded controls, and, more importantly, 
homogeneous in space and ``uncontrolled'' coefficients  for the  common noise. With all these provisions, in the classical MFG theory the system consisting of \eqref{master-intro}, \eqref{terminal2} and \eqref{pi-intro} collapses to a single infinite dimensional pde. 
\vs

We, in turn, focus on the popular case of separable payoffs, that is, 
\begin{equation}
J(x,m)=J\left( x-C\left( m\right) \right) ,\text{\  \ }  \label{J-payoffs}
\end{equation}%
where  $C:\mathcal{P}_2\to \R$ is, in general,   a function of the law of peers' wealth. 

\vs
We obtain the closed-form solution 
\begin{equation}
U(x,y,m,t)=u(x-f\left( y,m,t\right) ,y,t), 
\label{U-master-intro}
\end{equation}%
to the master equation (\ref{master-intro}) and \eqref{terminal2}, where  $u$ is as in (\ref{u-intro}), that is, the value function of the single agent
problem in the absence of competion. 
\vs
Function  $f:\R\times\mathcal{P}_2\times[0,T]\to \R$ solves  the non-local terminal value problem 
\be\label{f-intro}
f_{t} +\mathcal{L}_2f + \mathcal{L}_4 f 
+\frac{1}{2}f_{yy}  =0\text{ \ in \ }\mathbb{R\times }\mathcal{P}_{2}%
\mathcal{\times }\left[ 0,T\right) \ \text{and} \ f\left( y,m,T\right) =C(m), 
\ee
and the optimality condition (%
\ref{pi-intro}) becomes 
\begin{equation}
\pi ^{\ast } (x,y,m,t)  -\mathcal{L}_1 f (x,y,m,t) 
=f_{y}\left( y,m,t\right) +a^{\ast }(x-f\left( y,m,t\right) ,y,t), 
\ee
where $a^{\ast }$ is the optimal feedback control of the single
player problem; see \eqref{feedback-intro}.
\vs 
The intuition behind representation \eqref{U-master-intro} comes from interpreting the representative agent as the 
writer of claim $C(m^\ast_T)$ (at the optimum).
\vs
We provide details about the connection of the mean field game and an indifference valuation problem of the representative agent in subsection 5.4.
\vs      
If the couplings depend only on the first moment $\bar m=\int z \dd m(z)$ of peers' wealth, that is, for $m\in \mathcal{P}_{2}$,
\[
C(m)=C(\bar{m}), 
\]%
we analyze in detail the cases of exponential utility and arbitrary couplings, and of general utilities and linear couplings.

\vs
For
exponential utilities  $J:\R\to \R$ like $J(x)=-e^{-x}$  and general
couplings, the terminal value problem \eqref{f-intro} becomes autonomous, that is,
\begin{equation*}
\begin{split}
&f_{t} 
+\dfrac{1}{2} \pi^{\ast 2} \int \int f_{\bar{m}\bar{m}}(y,\bar{m},z_{1},z_{2},t) \dd m(z_{1}) \dd m(z_{2})  \\[2mm]
&+ \pi^\ast  \int 
f_{my}(y,\bar{m},z,t)dm(z)  +\frac{1}{2}f_{yy}\left( y,\bar{m},t\right) =0 \ \text {in} \ \mathbb{R\times }\R \mathcal{\times }\left[
0,T\right),\\[2.5mm]  
&f\left( y,\bar{m},T\right) =C(\bar{m}),
\end{split}
\end{equation*}

%
%
%
%
%
%
and 
\eqref{pi-intro} yields the wealth-independent mean field equilibrium
control $\pi ^{\ast }: \mathbb{R\times }\mathcal{P}_{2}\mathcal{\times }%
\left[ 0,T\right] \to \R$,  given by 
\[
\pi ^{\ast }(y,m,t)=\frac{f_{y}\left( y,\bar m,t\right) +c(y,t)}{1-f_{\bar{m}%
}(y, \bar m, t)}. 
\]%
The mean field value is given, for  $(x,y,\bar{m},t) \in \mathbb{R\times R\times }\mathcal{P}_{2}\mathcal{\times }\left[ 0,T\right]$  and  with $k$ as in 
\eqref{c-k.2}, by 
\[
U(x,y,\bar{m},t)=-e^{-(x-f\left( y,\bar{m},t\right) )+k(y,t)}
\]%
and the mean field equilibrium wealth  process $X^{\ast }$ can be written as 
\[
X_{s}^{\ast }=\mathcal{X}_{s}^{x-f\left( y,\bar{m},t\right) ,\ast }+\mathcal{%
E}\left( \int \mathcal{X}_{s}^{x-f\left( y,\bar{m},t\right) ,\ast }\dd m\left(
x\right) ,Y_{s},s)\right) \ \text{in} \ (t,T] \ \text{and} \  X_{t}^{\ast }=x,  
\]%
where 
\[\mathcal{E}\left(y, z, t\right) =f\left(y, 
G(y,z,t),t\right) \ \ \text{and} \ \  G(y,z,t)=\left( \cdot -f\left( y,\cdot
,t\right) \right) ^{\left( -1\right) }\left( y,z,t\right ). \]
\vs 
For general utilities and linear couplings, that is, $C(\bar{m})=\theta \bar{m}$  with $\theta \in \left( 0,1\right)$, 
we obtain 
\[
f\left( y,\bar{m},t\right) =\theta \bar{m}\text{ \ and \ }U(x,y,\bar{m}%
,t)=u(x-\theta \bar{m},y,t). 
\]%
The mean field equilibrium wealth and portfolio processes $X^{\ast }$
and $\pi ^{\ast }$ are given  by 
\[
X_{s}^{\ast }=\mathcal{X}_{s}^{x-\theta \bar{m},\ast }+\frac{\theta }{%
1-\theta }\dint \mathcal{X}_{s}^{x-\theta \bar{m},\ast } \dd m(x) \ \text{in \ } (t,T] \ \text{and} \ X_{t}^{\ast }=x, \]%
and 
\[
\pi _{s}^{\ast }=\mathcal{\alpha }_{s}^{x-\theta \bar{m},\ast }+\frac{\theta 
}{1-\theta }\dint \mathcal{\alpha }_{s}^{x-\theta \bar{m},\ast } \dd m(x)  \ \text{in \ } (t,T], \]%
where $\mathcal{X}^{x-\theta \bar{m},\ast }$ and $\mathcal{\alpha }%
^{x-\theta \bar{m},\ast }$ are the corresponding optimal processes of
the single agent problem, given by \eqref{wealth-intro} and \eqref{alpha-intro} and 
starting at $x-\theta \bar{m}$. 
\vs
The above decomposition
interprets the mean field equilibrium process $X^{x,\ast }$ as the sum of the optimal
wealth $\mathcal{X}^{x-\theta \bar{m},\ast }$ of the single agent starting
at $x-\theta \bar{m}$ plus its mean  in terms of the initial
distribution of $x$ weighted by the factor $\frac{\theta }{1-\theta }.$
\vs
The mean
field equilibrium policy $\pi _{s}^{x,\ast }$ has an analogous decomposition.
\vs
\textbf{Organization of the paper.} The paper is organized as follows. In section~2, we revisit the market model
under partial information and provide the new, alternative solution approach
and the closed-form solutions for general utilities for the single agent without competition problem.  In section~3, we
introduce the $N-$player game and its continuum limit, and derive the master
system for general utilities and couplings. In section~4, we study the class
of separable payoffs and analyze representative cases in section~5. We conclude in
section~6.
\vs

\section{The portfolio choice problem under partial information for single agent without competition%
}

We revisit the optimal portfolio choice problem for single agent without competition in a finite horizon with
general utilities, and with partial information for the stock's drift. We
introduce an alternative solution approach, establish regularity for the
value function, and produce closed-form expressions for the optimal wealth
and portfolio processes. We also present representative examples which cover
the so-called SAHARA\ utilities and utlities with completely monotonic
inverse marginals (CMIM).  

\subsection{The market model and background results on filtering}

We consider a probability space $\left( \Omega ,\mathcal{F},\mathbb{P}%
\right) $ which supports a Brownian motion $B$ with filtration $\mathbb{F}%
^{B}=\left \{ \mathcal{F}_{t}^{B};\text{ }0\leq t\leq T\right \} $ and a
random variable $\Theta :\Omega \rightarrow [\theta_1, \theta_2],$ with $0<\theta_1<\theta_2<\infty$, 
such that 
\be\label{ass1}
\begin{cases}
 \text{$\Theta$ is  independent of $
B$ under $\mathbb{P}$  and has prior probability}\\
 \text{distribution}  \ \nu (A)=\mathbb{P}\big[\Theta \in A\big] \; \text{ for} \ 
 A\in \mathcal{B}\left( \mathbb{R}^+\right),\\
\text{such that,  for each $y\in  \mathbb{R},$}
\dint_{\theta _{1}}^{\theta_{2}}e^{y\theta }\dd \nu(\theta) <\infty. 
\end{cases}
\ee

We consiser  the \text{observations process} 
\begin{equation}
Y_{t}=\Theta t+B_{t} \ \text{ in } \ \left( 0,T\right] \ \ \text{ and } \ \ %
Y_{0}=y\in \mathbb{R},  \label{Y-original}
\end{equation}%
and introduce the $\mathbb{P}$-augmentation $\mathbb{F}^{Y}=\left \{ \mathcal{F}_{t}^{Y};\text{ }0\leq t\leq
T\right \} $  of 

$\mathcal{F}^{Y}=\left \{ \sigma
\left( Y_{s}\right) ;0\leqslant s\leqslant t\leq T\right \} ,$ and the $\mathbb{P}$%
-augmentation $\mathbb{G}=\left \{ \mathcal{G}_{t};0\leq t\leq T\right \} $ of the enlarged filtration  generated by both $\Theta $ and $B,
$%
\[
\mathcal{G}_{t}^{\Theta ,B}=\left \{ \sigma \left( \Theta ,B_{s}\right)
;0\leqslant s\leqslant t\leq T\right \} =\sigma \left( \Theta \right) \vee 
\mathcal{F}_{t}^{B}.
\]

We assume   a financial market with  two securities. The first is  a riskless bond taken
to be the numeraire and offering zero interest rate. The second security is
a risky stock whose (discounted by the numeraire) price $S$  satisfies 
\textbf{\ }%
\begin{equation}
dS_{t}=\Theta S_{t}dt+S_{t}dB_{t}=S_{t}dY_{t} \ \text{in} \ (0,T]  \ \text{and} \ \ S_{0}>0.
\label{stock-theta}
\end{equation}%
The key ingredient is that one cannot observe directly either $%
\Theta $ or the Brownian motion $B,$ but only the levels of process $S$.  In
other words, it is only the information $\mathbb{F}^{Y}$ generated by the
observations process $Y$ that is accessible to the agent.
\vs
In this market, the set of admissible strategies $\mathcal{A}$ consists of
self-financing investment strategies $\pi $ representing the (discounted)
amount allocated in the stock and measurable with respect to the information
generated by the observations process, that is,
\begin{equation}
\mathcal{A}:=\big\{ \pi :\text{ }\pi _{t}\in \mathcal{F}_{t}^{Y}\text{ \  \
and  \ \ }\mathbb{E}   \int_{0}^{T}\pi _{t}^{2}dt<\infty  \big \} .  \label{admissible-set}
\end{equation}

\begin{remark}
Sde  \eqref{stock-theta}  may have the more general form 
\[
dS_{s}=\Theta S_{s}dt+\sigma S_{s}dB_{s} \ \text{in} \ (0,T] \ \ \text{and} \ \ %
S_{0}>0, 
\]%
where $\sigma $ is a known (positive) coefficient, representing the
volatility of the stock. To ease the presentation, we assume $\sigma \equiv
1,$ as the general case can be directly incorporated by a simple rescaling.
\end{remark}

Next, we review some classical results from filtering which yield a reduced 
complete information model; see, among others, \cite{Bain-Crisan}, \cite{Bjork-Davis-L}, 
\cite{Kallianpur} and \cite{Karatzas-Zhao}.
\vs

Let $F:\mathbb{R\times }\left[ 0,T\right] \rightarrow \mathbb{R}^{+}$  and $b:\mathbb{R\times }\left[ 0,T\right] \rightarrow \mathbb{R}^{+}$ be defined  as 
\begin{equation}\label{F-dfn}
F(y,t)=\int_{\mathbb{\theta }_{1}}^{\theta _{2}}e^{y\theta -\frac{1}{2}%
\theta ^{2}t} \dd \nu \left( \theta \right) \ \ \text{and} \ \ b=\frac{F_{y}}{F}. 
\end{equation}%
It follows that $F$ solves the (ill-posed) heat equation 
\begin{equation}
F_{t}+\frac{1}{2}F_{yy}=0 \ \ \text{in} \ \ \R\times (0,T] \ \ \text{ and  } \ \ F(y,0)=\int_{\mathbb{\theta }%
_{1}}^{\theta _{2}}e^{y\theta }d\nu \left( \theta \right) ,
\label{F-equation}
\end{equation}%
 is strictly convex in $y$ and strictly decreasing in $t$, and satisfies, for some constants $c_1=c_1(\nu (\theta )), c_2=c_2(\nu (\theta ))>0$  and all $y\in\R$, %
\begin{equation}
c_1e^{-c_1\left \vert y\right \vert }\leq F(y,T)\leq c_2e^{c_2\left \vert y\right \vert }. 
\label{F-growth}
\end{equation}%
Finally, $F$  is absolutely monotonic, 
which yields that 
\begin{equation}
b_{y}=\frac{F_{yy}F-\left( F_{y}\right) ^{2}}{
F ^{2}}>0 \ \text{in} \ \mathbb{R\times }\left[ 0,T\right].\text{%
\ }  \label{b-derivative}
\end{equation}%
It follows that 
\begin{equation}
0<\theta _{1}\leq b\leq \theta _{2}\  \  \  \text{and \ }\
0<b_{y}\leq \theta _{2}^{2}-\theta _{1}^{2}\text{   \ in } \ \mathbb{%
R\times }\left[ 0,T\right] .  \label{b-bounds}
\end{equation}%
The best estimator of $\Theta $ is given by 
\[
\hat{\Theta}_{t}=\mathbb{E}\left[ \left. \Theta \right \vert \mathcal{F}%
_{t}^{Y}\right] =\left \{ 
\begin{array}{c}
b(Y_{t},t) \ \ \text{if} \ \ t\in \left( 0,T\right], \\ 
\\ 
\int_{\mathbb{\theta }_{1}}^{\theta _{2}}\theta \nu \left( d\theta \right) \ \ 
\text{ if} \ \ t=0,%
\end{array}%
\right. 
\]%
and the so-called \text{innovations process} $W$, defined for $t\in \left[ 0,T\right]$, by 
\begin{equation}
W_{t}=Y_{t}-\int_{0}^{t}\hat{\Theta}_{s}ds=Y_{t}-\int_{0}^{t}b(Y_{s},s)ds,%
 \label{W-innovations}
\end{equation}%
is a $\left( \mathbb{F}^{Y},\mathbb{P}\right) $ standard Brownian motion.
\vs

In view of the above, the original sdes \eqref{stock-theta} and \eqref{Y-original} can be
now written as

\begin{equation}
dS_{t}=b\left( Y_{t},t\right) S_{t}dt+S_{t}dW_{t} \ \text{in} \  (0,T] \ \ \text{ and } \ \ S_{0}=S>0,
\label{stock-reduced}
\end{equation}%
and%
\begin{equation}
dY_{t}=b\left( Y_{t},t\right) dt+dW_{t} \ \text{in} \  (0,T] \ \ \text{ and } \ \ Y_{0}=y\in \mathbb{R}.
\label{Y-reduced}
\end{equation}%
The  model consisting of \eqref{stock-reduced} and  \eqref{Y-reduced} 
constitutes a
complete information market model with a single local factor $Y$.
\vs
For each $\alpha \in \mathcal{A}$, the agent's wealth, denoted by $\mathcal{X},$ solves 
\be\label{takis2000}
d\mathcal{X}_{s}=b\left( Y_{s},s\right) \alpha _{s}ds+\alpha _{s}dW_{s} \ \ 
\text{in} \ \ (t,T] \ \ \text{and} \ \ \mathcal{X}_{t}=x\in \mathbb{R}.\ee
Both $\mathcal{X}$
and $\alpha $ are expressed in discounted by the bond units and are, thus,
unitless quantities. This property is important for the proper definitions
of the payoffs in the upcoming MFG as they are, in general,  allowed to depend on
wealths, their laws, means, quantiles and higher moments.
\vs

The value function $u: \mathbb{%
R\times R\times }\left[ 0,T\right] \to \R$ is defined as 
\begin{equation}
u(x,y,t)=\sup_{\pi \in \mathcal{A}}\mathbb{E}\left[ \left. J(\mathcal{X}%
_{T})\right \vert \mathcal{X}_{t}=x,Y_{t}=y\right].
\label{u-DFN}
\end{equation}%
We introduce next the main assumptions for the
utility function $J:\R\to \R,$  the risk tolerance $r:\R\to \R^+$  and  the inverse marginal utility $I:\R^+\to \R,$ defined  respectively by 
\begin{equation}
r=-\frac{J^{\prime }}{J^{\prime \prime }} \ \  \text{and} \ \  I=\left( J^{\prime }\right) ^{\left( -1\right) }. \label{risk}
\end{equation}%

We assume that 
\be\label{utility}
\begin{split}
&J \text{ \ is a strictly concave and
strictly increasing map in }\mathcal{C}^{4}\left( \mathbb{R}\right), \ \text{and} \\[1.5mm]
&\text{there exist $A,B, c>0$ and $\gamma >1$ such that, for all $x\in \R$,}\\[1.5mm]
&\qquad B \leq r(x)\leq \sqrt{Ax^{2}+B}, \ \ \left
\vert r^{\prime } \right \vert \leq A \ \ \text{and} \ \ 
\underset{z\to \infty} \lim z^\gamma I(z)=c.
\end{split}
\ee



The  assumptions above are mild and are  satisfied by a large class of utility
functions like, for example, the exponential utility 
\be\label{exp}
J(x)=-C_{1}e^{-B
x}+C_{2} \ \ \text {for some $C_{1} >0$ \ and \ $C_{2}\in \mathbb{R},$}
\ee
for which the risk tolerance is constant $r(x)=B,$  and
by the so-called SAHARA\ utilities, introduced in \cite{Musiela-Za-monotone}
(see, also, \cite{Chen-SAHARA} and \cite{zariphopoulou-zhou}), that are
represented via their risk tolerances in the parametric form%
\begin{equation}
r(x)=\sqrt{Ax^{2}+B} \ \ \text{for} \ \ A,B>0.  \label{double}
\end{equation}%
They, also, apply to the extended family of utilities with completely
monotonic inverse marginals (CMIM),\ introduced in \cite{Musiela-Za-monotone}
and further examined in \cite{Mostovi-et-al}. We further elaborate on these
utilities in subsection 2.3.
\vs

We note that (\ref{risk})  implies that 
\begin{equation}
-zI^{\prime }(z)=r(I(z))\text{ \  \ and \  \ }zI^{\prime }(z)+z^{2}I^{\prime
\prime }(z)=r(I(z))r^{\prime }(I(z)).
\label{I-identities}
\end{equation}%

It is not surprising that, since  the market
dynamics (\ref{stock-reduced}) and (\ref{Y-reduced})\ compose a complete
market model,  
(\ref{u-DFN})\ can be linearized.
Such linearization is inherent in the duality approach and, in Markovian
models, a linear pde can be easily derived from the HJB equation and Fenchel
transform. However, for general utilities, this pde is two-dimensional and
not uniformly elliptic and, thus, regularity results are not a priori known.
To the best of our knowledge, this issue has not been addressed in the
existing literature, except for the case of homothetic utilities where the
HJB\ equation reduces to a simple one-dimensional pde. 
\vs

Here, we circumvent this issue by first looking at the  auxiliary problem
 \eqref{u-tilda}  for  which the related linear pde \eqref{heat-h} is one-dimensional
and, thus, regularity results are easy to obtain. We
stress that this dimensionality reduction works not because of utility
homotheticity. Rather it results from  suitably modifying the utility and, furthermore, removing the drift in the wealth
dynamics,  which essentially results in  the optimal wealth process being 
static in the $x-$argument, as manifested in \eqref%
{optimal-processes-tilda}.
\vs

A similar optimization problem was considered in \cite{Karatzas-Zhao} but in
a different context within the duality approach employed therein. Here,
problem (\ref{u-tilda}) plays a different role. Firstly, it helps us
establish regularity for the original problem under much milder and direct
assumptions on the utility function. Secondly, the expressions (see \eqref%
{optimal-processes-tilda} below) for the optimal processes guide us how to
construct their general analogues, (\ref{x-thm}) and (\ref{a-process-thm}),
whose form turns out to be quite useful in the MFG\ we examine later on.

\subsection{An auxiliary expected utility problem}

We consider a fictitious portfolio choice problem with modified terminal
utility and dynamics. Specifically, let $W^{\mathbb{Q}}$ be a Brownian
motion on probability space $\left( \Omega ,\mathcal{F},\mathbb{Q}\right) $
and, for $t\in [0,T]$,  consider the processes $\tilde{\mathcal{X}}$ and ${\mathcal{Y}}$ solving
\begin{equation} \label{X-Y-tilda}
\begin{split}
&d\tilde{\mathcal{X}}_s=\tilde {\alpha}_s dW^{\mathbb{Q}} \ \text{in} \ (t,T] \ \text{and} \ \tilde{\mathcal{X}_t}=x\in \R,\\[1.5mm]
& d \mathcal{Y}_s=dW^{\mathbb{Q}} \ \text{in} \ (t,T] \ \text{and} \ \mathcal{Y}_t=y \in \R.
\end{split}
\ee

The value function  $w:\R\times\R\times [0,T]\to \R$ is defined  by 
\begin{equation}
w(x,y,t)=\sup_{\tilde{\alpha}\in \tilde{ \mathcal{A}}}\mathbb{E}_{\mathbb{Q}}\left[
\left. J\left( \mathcal{\tilde{X}}_{T}\right) F(\mathcal{Y}_{T},T)\right
\vert \mathcal{\tilde{X}}_{t}=x\text{, }\mathcal{Y}_{t}=y\right], \label{u-tilda}
\end{equation}%
with $F$ as in (\ref{F-dfn}) and $\tilde{\mathcal{A}}$ defined similarly to  \eqref{admissible-set}.
\vs

To  find  the value function $w(x,y,t)$ and construct the optimal processes 
$\mathcal{\tilde{X}}^{\ast }$ and $\tilde{\alpha}^{\ast }$,  we
first introduce a key auxiliary function and study its properties.

\begin{proposition}
(i) There exists a unique smooth solution $h:\R\times\R\times[0,T]\to \R$ to  the terminal value problem 
\begin{equation}
h_{t}+\frac{1}{2}h_{yy}=0 \ \text{in} \ \R\times [0,T] \ \ \text{and}   \  \ h(z,y,T)=I\left( \frac{e^{-z}}{F(y,T)}%
\right),
\label{heat-h}
\end{equation}
with $I$ and $F$ as in (\ref{risk}) and (\ref{F-dfn}) respectively, such that, for each \\ $\left( y,t\right) \in \mathbb{R\times 
}\left[ 0,T\right], $
\be\label{heat-1}
\begin{split}
&\lim_{z\rightarrow -\infty }h(z,y,t)=-\infty \ \text{ and} \  \lim_{z\rightarrow
\infty }h(z,y,t)=\infty,\\[1.2mm]
&\hskip.075in h_{y}>0\text{ \ and \ }h_{z}>0\text{ \ in \ }\mathbb{R\times
R\times }\left[ 0,T\right], 
\end{split}
\end{equation}%
and, for each $\left( y,t\right) $ in $\mathbb{R\times }\left[ 0,T\right] $, the
inverse in $z$ function $h^{\left( -1\right) }(\cdot, y,t):\R\to \R$ exists and is strictly increasing.
\vs
(ii) For all $(x,y,t) \in \mathbb{R\times R\times }\left[ 0,T\right]$ and the 
 constants $A$ and $B$  in \eqref{utility}, 
\begin{equation}
h_{y}(z,y,t)\leq \theta _{2}\sqrt{Ah^{2}\left( z,y,t\right) +Be^{A(T-t)}}, %
\label{heat-2}
\end{equation}%
and 
\begin{equation} \label{heat-3}
-Ah_{z}\left( z,y,t\right) \leq h_{yz}(z,y,t)\leq Ah_{z}\left( z,y,t\right).
\end{equation}%
\end{proposition}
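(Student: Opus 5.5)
The plan is to exploit the fact that \eqref{heat-h} is a backward heat equation in $y$ alone, with $z$ entering only as a parameter. I would therefore define $h$ through the probabilistic representation
\[
h(z,y,t)=\mathbb{E}\Big[I\Big(\frac{e^{-z}}{F(y+\sqrt{T-t}\,N,T)}\Big)\Big], \qquad N\sim\mathcal N(0,1),
\]
and check that it is finite and smooth.

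\textbf{Growth and existence.} From \eqref{I-identities} and \eqref{utility} we have $-wI'(w)=r(I(w))\le\sqrt{AI(w)^2+B}$. Integrating this ODE shows that $|I(w)|$ grows at most like a power of $1/w$ as $w\to0$. The assumption $\lim_{w\to\infty}w^\gamma I(w)=c$ shows that $I$ stays bounded as $w\to\infty$. Combined with \eqref{F-growth}, the terminal datum therefore grows at most exponentially in $|y|$, locally uniformly in $z$. Gaussian convolution is then well defined, and I can differentiate under the expectation in $z$ and $y$ arbitrarily often, since $J\in\mathcal C^4$ and $F$ is smooth. This gives a smooth solution. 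Uniqueness would follow from Tychonoff-type uniqueness for the heat equation in the class of functions of exponential growth, applied for each fixed $z$.

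\textbf{Monotonicity and limits.} Differentiating the terminal datum with $w=e^{-z}/F(y,T)$ and using $-wI'(w)=r(I(w))$ gives
\[
\partial_z I(w)=r(I(w)),\qquad \partial_y I(w)=b(y,T)\,r(I(w)).
\]
Hence
\[
h_z=\mathbb{E}\big[r(I(w))\big]\ge B>0,\qquad h_y=\mathbb{E}\big[b\,r(I(w))\big]>0,
\]
where $b$ is evaluated at the shifted point and is bounded below by $\theta_1>0$ by \eqref{b-bounds}. The bound $h_z\ge B$ gives the limits $h\to\pm\infty$ as $z\to\pm\infty$, and hence the existence of a strictly increasing inverse $h^{(-1)}(\cdot,y,t)$. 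This completes (i).

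\textbf{Estimates in (ii).} Differentiating once more, using $\partial_z r(I)=r'(I)\,r(I)$ and $|r'|\le A$, gives
\[
h_{zz}=\mathbb{E}\big[r'r\big],\qquad h_{yz}=\mathbb{E}\big[b\,r'r\big].
\]
The first identity yields $|h_{zz}|\le A h_z$. The second yields $|h_{yz}|\le A h_y$, and hence, together with $b\le\theta_2$, $|h_{yz}|\le A\theta_2 h_z$. This is a Gronwall-type bound of the kind stated in \eqref{heat-3}. I would check carefully which form is needed later: if the intended statement is literally $|h_{yz}|\le A h_z$, one needs to absorb the factor $b$, for instance through a normalization $\theta_2\le1$ or a rescaling.

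For \eqref{heat-2}, it suffices by $h_y\le\theta_2 h_z$ to prove
\[
h_z\le\sqrt{Ah^2+Be^{A(T-t)}}.
\]
At $t=T$ this is exactly the assumption $r\le\sqrt{Ax^2+B}$. I would set $\psi=h_z^2-Ah^2-Be^{A(T-t)}$. Using $\partial_t+\tfrac12\partial_{yy}$ applied to squares of solutions, this satisfies
\[
\big(\partial_t+\tfrac12\partial_{yy}\big)\psi=h_{zy}^2-Ah_y^2+ABe^{A(T-t)}.
\]
The aim is a comparison or maximum principle giving $\psi\le0$. This is the main obstacle: the sign of $h_{zy}^2-Ah_y^2$ is not automatic. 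My first attempt would be to use $|h_{yz}|\le A h_y$ and the structure of $h_y=\mathbb E[b\,r]$ to control the $-Ah_y^2$ term. If that fails, I would work instead along $z$ as in an ODE: view $z\mapsto h$ as satisfying $h_z\le\sqrt{Ah^2+\beta(t)}$, and propagate this differential inequality backward in time by Itô's formula along $y+W$. The exponential weight $e^{A(T-t)}$ in $\beta(t)$ is designed to absorb the defect terms via Gronwall.
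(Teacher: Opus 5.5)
Part (i) and your bound on $h_{yz}$ are sound and agree in substance with the paper. The paper computes the same terminal identities $h_z(\cdot,T)=r(I)$, $h_y(\cdot,T)=b\,r(I)$ and $h_{yz}(\cdot,T)=b\,r'(I)\,h_z$, and propagates them by the maximum principle. Your bound $h_z\ge B$ justifies the limits in $z$ more cleanly than the paper does. You are also right that one really gets only $|h_{yz}|\le\theta_2Ah_z$: the paper tacitly takes $\theta_2=1$ at this point, and later uses exactly $\tilde\alpha^*_x\le\theta_2A$. One small caution on the growth step: since $r\ge B>0$, $\mathrm{d}I/\mathrm{d}\ln w=-r(I)\le -B$, so $I(w)\to-\infty$ as $w\to\infty$, and you should not rely on boundedness there. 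Your bound $|\mathrm{d}I/\mathrm{d}\ln w|\le\sqrt{AI^2+B}$ alone already gives power growth at both ends.

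The genuine gap is \eqref{heat-2}. Your reduction to $h_z\le\sqrt{Ah^2+\beta(t)}$ via $h_y\le\theta_2h_z$ is fine. But for the squared quantity $\psi$ the source $h_{zy}^2-Ah_y^2+ABe^{A(T-t)}$ has no sign. To get $\psi\le0$ from $\psi(\cdot,T)\le0$ you would need a \emph{lower} bound on $h_{zy}^2$. The estimate $|h_{yz}|\le Ah_y$ is an upper bound and cannot supply this, and the ``Gronwall along $z$'' idea is not yet an argument.

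The missing idea, which is what the paper uses, is to compare with the square root itself rather than with its square. If $h_t+\frac12h_{yy}=0$ and $g=\sqrt{Ah^2+\beta(t)}$, the second-order correction from the square root absorbs the $h^2h_y^2$ contribution, and
\[
g_t+\tfrac12 g_{yy}=\frac{\beta'}{2g}+\frac{A\beta\,h_y^2}{2g^3}.
\]
The paper takes $\theta_2=1$ and $\beta=Be^{A(T-t)}$, and compares $g$ directly with $h_y$. Then $\phi=h_y-g$ solves $\phi_t+\frac12\phi_{yy}+C\phi=0$ with $C=\frac{ABe^{A(T-t)}}{2g^3}(g+h_y)>0$. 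Since $\phi(\cdot,T)\le0$ by your terminal computation, the maximum principle (or Feynman--Kac) gives $\phi\le0$.

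Your own route closes the same way. Take $\beta=Be^{A\theta_2^2(T-t)}$ and $\phi=h_z-g$. Using $h_y\le\theta_2h_z$ you get $\phi_t+\frac12\phi_{yy}\ge\frac{A\theta_2^2\beta}{2g^3}(g^2-h_z^2)=-C\phi$ with $C>0$. This yields \eqref{heat-2} with exponent $A\theta_2^2$, which is the stated one under the normalization $\theta_2=1$.

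If you prefer to keep $\psi$, you must eliminate $h_{zy}$ through $\psi_y=2h_zh_{zy}-2Ahh_y$. This turns the bad term into three pieces: a $\psi_y$-drift, the linear term $-\frac{Ah_y^2}{h_z^2}\psi$, and the nonnegative remainder $\beta\bigl(A\theta_2^2-\frac{Ah_y^2}{h_z^2}\bigr)$.
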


\begin{proof}
(i) The smoothness and uniqueness results follow from  (\ref{F-dfn}), \eqref{F-growth} and (\ref{utility}).
\vs
Since 
\begin{equation}
h_{y}(z,y,T)=-\frac{F_{y}(y,T)}{F(y,T)}\frac{e^{-z}}{F(y,T)}I^{\prime
}\left( \frac{e^{-z}}{F(y,T)}\right) >0\text{,\ }  \label{h-y-z}
\end{equation}%
and%
\begin{equation}
h_{z}(z,y,T)=-\frac{e^{-z}}{F(y,T)}I^{\prime }\left( \frac{e^{-z}}{F(y,T)}%
\right) >0, \label{h-z-z}
\end{equation}%
the  maximum principle yields (\ref{heat-1}).\ 
\vs


(ii) At $t=T,$ (\ref{h-y-z}) combined with (\ref{b-bounds}), (\ref{utility}) and (\ref{I-identities}), and the terminal condition in (\ref%
{heat-h}) imply that 
\[
h_{y}(z,y,T)=b(y,T)r\left( I\left( \frac{e^{-z}}{F(y,T)}\right) \right) \leq \theta _{2}\sqrt{Ah^{2}\left( z,y,T\right) +B}.\]
%
Let $g:\R\times\R\times[0,T]\to \R^+$ and $C: \R\times\R\times[0,T]\to \R$ be defined by 
\begin{equation}\label{g-aux}
g(z,y,t)=\theta _{2}\sqrt{Ah^{2}\left( z,y,t\right) +Be^{A(T-t)}},
\end{equation}%
and
\bee
C(z,y,t)=\frac{ABe^{A(T-t)}}{2g^{3}}\left( g+h_{y}\right),
\eee
and note that 
\begin{equation}
h_{y}(\cdot,\cdot,T)\leq g(\cdot, \cdot,T) \ \text{in}  \ \R\times\R,  \label{h-y-g-terminal}
\end{equation}%
and, in view of \eqref{g-aux} 
%
%
and the first inequality in (\ref{heat-1}), 
$C>0 \ \text{in} \ \mathbb{R\times R\times }\left[ 0,T\right].$
\vs
We claim that $h_{y}-g$ solves 
\begin{equation}
\left( h_{y}-g\right) _{t}+\frac{1}{2}\left( h_{y}-g\right)
_{yy}+C\left( h_{y}-g\right) =0\text{ \ in \ }\mathbb{R\times R\times 
}\left[ 0,T\right) \text{.}  \label{h-g}
\end{equation}%
To ease the notation, we assume that $\theta _{2}=1$. Then
direct calculations yield that 
\begin{equation*}
g_{t}+\frac{1}{2}g_{yy} = \frac{ABe^{A(T-t)}}{2g^{3}}%
\left( g^{2}-\left( h_{y}\right) ^{2}\right).
\eee
Using
(\ref{h-y-g-terminal})\ and (\ref{h-g}) and applying  the maximum principle we find that  
\[
h_{y}\leq g \text{  \ in \ }\mathbb{R\times R\times }\left[ 0,T%
\right] , 
\]%
and, thus,  (\ref{heat-2})\ holds.
\vs
Finally, using again the maximum principle, we note that  to  show (\ref{heat-3}), it suffices to establish 
it for $t=T.$ 
\vs
To this end, note that  \eqref{F-dfn} and (\ref{h-y-z}) give 
\[
h_{yz}(z,y,T)=b(y,T)h_{zz}(z,y,T), 
\]%
and, thus, after using (\ref{I-identities}) and (\ref{h-z-z}), 
\[
h_{yz}(z,y,T)=b(y,T)h_{z}\left( z,y,T\right) r^{\prime }\left( I\left( \frac{e^{-z}}{%
F(y,T)}\right) \right).
\]
The latter equality, (%
\ref{b-bounds}) and the derivative estimate in (\ref{utility}) give 
\[
\left \vert h_{yz}(z,y,T)\right \vert =b(y,T)h_{z}\left( z,y,T\right)
\left
\vert r^{\prime }\left( I\left( \frac{e^{-z}}{F(y,T)}\right) \right)
\right
\vert \leq Ah_{z}\left( z,y,T\right) , 
\]%
and we conclude.

\end{proof}

Next, we construct the value function $w$ and the optimal processes $%
\mathcal{\tilde{X}}^{\ast }$ and \ $\tilde{\alpha}^{\ast }$.

\begin{proposition}
Assume that   $\mathcal{Y}$ solves (\ref%
{X-Y-tilda}). Then:
\vs
(i)~The value function $w$ defined in  \eqref{u-tilda} is the unique  in  $\mathcal{C}^{2,2,1}\left( \mathbb{R\times
R\times }\left[ 0,T\right] \right) $ solution  to the terminal value HJB\ equation%
\be\label{HJB-tilda}
\begin{split}
&w_{t}+\max_{\alpha }\left( \frac{1}{2}\alpha ^{2}w_{xx}+\alpha w_{xy}\right)
+\frac{1}{2}w_{yy} \\
&=w_{t}-\frac{\left( w_{xy}\right) ^{2}}{2w_{xx}}+\frac{1}{2}w_{yy}=0\text{ \
\ in \ }\mathbb{R\times R\times }\left[ 0,T\right),\\[1.5mm]
&w(x,y,T)=J\left( x\right) F(y,T),
\end{split}
\end{equation}%
and is given, for $h$ as in (\ref{heat-h}),   by 
\begin{equation}\label{u-tilda-representation}
w(x,y,t)=\mathbb{E}_{\mathbb{Q}}\left[ \left. J(h\left( h^{\left( -1\right)
}\left( x,y,t\right) ,\mathcal{Y}_{T},T\right) F(\mathcal{Y}_{T},T)\right
\vert \mathcal{Y}_{t}=y\right]. 
\ee
Furthermore, for each  $(z,y,t) \in \R\times \R\times \left[ 0,T\right]$,  
\begin{equation}\label{w-h-ez}
w_{x}\left( h\left( z,y,t\right) ,y,t\right) =e^{-z}.   
\end{equation}
\vs
(ii)~For each $(x,y,t) \in \R\times \R\times \left[ 0,T                     %
\right]$, the optimal feedback policy is given  by  
\be\label{a-optimal-tilda}
\tilde{\alpha}^{\ast }(x,y,t)=-\frac{w_{xy}\left( x,y,t\right) }{
w_{xx}\left( x,y,t\right) }=h_{y}\left( h^{( -1)}(x,y,t),y,t \right),
\end{equation}
and satisfies, for each $(x,y,t) \in \R\times\R\times \left[ 0,T                     %
\right]$, the inequalities%
\begin{equation}\label{a-estimate}
0<\tilde{\alpha}^{\ast }(x,y,t)\leq \theta _{2}\sqrt{Ax^{2}+Be^{A\left(
T-t\right) }} \ \ \text{and} \  \  \tilde{\alpha}_{x}^{\ast }(x,y,t)\leq \theta
_{2}A. 
\end{equation}
\vs
(iii)~The optimal processes $\mathcal{\tilde{X}}^{\ast }$ and $\tilde{\alpha}%
^{\ast }$ are given, respectively, for each $(x,y) \in \R\times \R$ and $0\leq t \leq s \leq T$,  by 
\begin{equation}
\mathcal{\tilde{X}}_{s}^{\ast }=h\left( h^{\left( -1\right) }\left(
x,y,t\right) ,\mathcal{Y}_{s},s\right) \ \text{and } \ \tilde{\alpha}_{s}^{\ast
}=h_{y}\left( h^{\left( -1\right) }\left( x,y,t\right) ,\mathcal{Y}%
_{s},s\right). 
\label{optimal-processes-tilda}
\end{equation}
\end{proposition}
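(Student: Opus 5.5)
We construct $w$ directly from $h$ and verify that it has all the stated properties. The definition is
\[
w(x,y,t)=\mathbb{E}_{\mathbb{Q}}\Big[ J\big(h(h^{(-1)}(x,y,t),\mathcal{Y}_T,T)\big)F(\mathcal{Y}_T,T)\,\Big|\,\mathcal{Y}_t=y\Big].
\]
Write $\Phi(z,y,t)$ for the same expression with $h^{(-1)}(x,y,t)$ replaced by a free variable $z$. Then $\Phi$ solves the backward heat equation $\Phi_t+\frac12\Phi_{yy}=0$ with terminal value $J(h(z,y,T))F(y,T)$, and $w(x,y,t)=\Phi(h^{(-1)}(x,y,t),y,t)$.

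\textbf{Step 1: the marginal identity and regularity.} Differentiate $\Phi$ in $z$ under the expectation and use the terminal condition in \eqref{heat-h}, which gives $J'(h(z,y,T))F(y,T)=e^{-z}$. This yields
\[
\Phi_z(z,y,t)=\mathbb{E}_{\mathbb{Q}}\big[e^{-z}h_z(z,\mathcal{Y}_T,T)\,\big|\,\mathcal{Y}_t=y\big]=e^{-z}h_z(z,y,t),
\]
the last equality because $h_z$ also solves the backward heat equation. Since $h$ and $h^{(-1)}$ are smooth and $h_z>0$ by Proposition~1(i), the chain rule gives
\[
w_x=\Phi_z(h^{(-1)},y,t)\,/\,h_z(h^{(-1)},y,t)=e^{-h^{(-1)}},
\]
which is exactly \eqref{w-h-ez}. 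This also shows $w\in\mathcal{C}^{2,2,1}$ with $w_x>0$ and $w_{xx}<0$. Differentiating $w_x(h(z,y,t),y,t)=e^{-z}$ in $y$ gives $w_{xx}h_y+w_{xy}=0$, hence $-w_{xy}/w_{xx}=h_y(h^{(-1)},y,t)$, which is \eqref{a-optimal-tilda}.

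\textbf{Step 2: the HJB equation.} Let $v=w_x$. From $v(h(z,y,t),y,t)=e^{-z}$ and the fact that $h$ solves the heat equation, differentiating in $t$ and twice in $y$ gives
\[
v_t+\tfrac12 v_{yy}+h_y v_{xy}+\tfrac12 h_y^2 v_{xx}=0 .
\]
This is the $x$-derivative of the HJB equation \eqref{HJB-tilda} with the optimal feedback $h_y=-w_{xy}/w_{xx}$ plugged in, so the envelope argument identifies the two. I would close the argument by checking that the HJB operator applied to $w$ is independent of $x$. Then I would show it vanishes, either by letting $x\to-\infty$ (through $z\to-\infty$) using the growth bounds \eqref{heat-2}, or more cleanly by directly computing $w_t+\frac12w_{yy}+h_yw_{xy}+\frac12h_y^2w_{xx}$ via $\Phi$ and the heat equations for $\Phi$ and $h$. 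The second option is essentially Itô's formula applied to $\Phi(Z,\mathcal{Y}_s,s)$ along the curve where $Z$ is kept constant. The terminal condition holds because $h^{(-1)}(x,y,T)$ composed with $h(\cdot,y,T)$ is the identity. The bounds \eqref{a-estimate} follow from \eqref{heat-2} evaluated at $h=x$. For the derivative bound, $\tilde\alpha^*_x=h_{yz}/h_z$ at $h^{(-1)}$, which is at most $A\le\theta_2A$ after rescaling by \eqref{heat-3}, with the factor $\theta_2$ tracked as in the proof of Proposition~1.

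\textbf{Step 3: verification and optimal processes.} Fix $z_0=h^{(-1)}(x,y,t)$. Since $h$ solves the heat equation, Itô's formula gives $d\,h(z_0,\mathcal{Y}_s,s)=h_y\,dW^{\mathbb{Q}}$. So the process $h(z_0,\mathcal{Y}_s,s)$ is the wealth generated by $\tilde\alpha_s=h_y(z_0,\mathcal{Y}_s,s)$, which is consistent with the feedback because $h^{(-1)}(\tilde{\mathcal{X}}_s,\mathcal{Y}_s,s)=z_0$ stays constant. This proves \eqref{optimal-processes-tilda}. Admissibility follows from the linear growth \eqref{heat-2}, which gives square integrability. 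For optimality I would use the standard verification theorem: for any admissible $\tilde\alpha$, the process $w(\tilde{\mathcal{X}}_s,\mathcal{Y}_s,s)$ is a local supermartingale by the HJB equation. Concavity of $w$ and a localization argument, using the growth of $J$ controlled by the risk-tolerance bounds in \eqref{utility} and by \eqref{F-growth}, upgrade this to a true supermartingale, with equality for $\tilde\alpha^*$. Uniqueness in $\mathcal{C}^{2,2,1}$ among solutions with the appropriate growth follows by the same verification argument, since any such solution equals the value function.

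\textbf{Main obstacle.} I expect the integrability and growth control needed both for the verification (a supermartingale rather than only a local one) and for uniqueness to be the main obstacle. The utility $J$ may be unbounded below, and $F(\mathcal{Y}_T,T)$ grows exponentially. I would handle this with the linear growth of $h$ and $h_y$ from Proposition~1 and with Gaussian moments of $\mathcal{Y}$.
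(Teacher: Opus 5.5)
Your construction of the candidate and your check of the HJB equation follow the paper. Your $\Phi$ is the paper's $K$ from \eqref{heat-K}, and your identity $\Phi_z=e^{-z}h_z$ is \eqref{F-h-tilda}. Applying $\partial_t+\frac12\partial_{yy}$ to $\Phi(z,y,t)=w(h(z,y,t),y,t)$ at fixed $z$ is a tidier version of the paper's computation with the derivatives of $p=h^{(-1)}$, and it suffices on its own; the detour through $v=w_x$ and the limit $x\to-\infty$ can be dropped. You depart from the paper when identifying this function with the value function \eqref{u-tilda}. The paper gets this identification, together with the uniqueness claim, from viscosity uniqueness for \eqref{HJB-tilda} in the class of functions strictly increasing and strictly concave in $x$. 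You propose a verification theorem instead.

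As you set it up, the verification has a genuine gap, exactly at the step you flagged. Admissibility only gives $\mathbb{E}_{\mathbb{Q}}\int_t^T\tilde\alpha_s^2\,ds<\infty$, so $\tilde{\mathcal X}$ has only second moments. Meanwhile $w$ can decay exponentially as $x\to-\infty$, and the paper explicitly includes \eqref{exp}. So growth bounds on $h$ and Gaussian moments of $\mathcal Y$ cannot produce the integrable lower bound on the stopped process $w(\tilde{\mathcal X},\mathcal Y,\cdot)$ that Fatou's lemma needs to remove the localization. The missing idea is to bypass the local supermartingale entirely and use concavity of $J$ only at time $T$, with the tangent taken at the candidate optimum. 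Set $z_0=h^{(-1)}(x,y,t)$ and $\hat{\mathcal X}_T=h(z_0,\mathcal Y_T,T)$. The terminal condition in \eqref{heat-h} gives $J'(\hat{\mathcal X}_T)F(\mathcal Y_T,T)=e^{-z_0}$, a constant, hence
\[
J(\tilde{\mathcal X}_T)F(\mathcal Y_T,T)\le J(\hat{\mathcal X}_T)F(\mathcal Y_T,T)+e^{-z_0}\big(\tilde{\mathcal X}_T-\hat{\mathcal X}_T\big).
\]
Both wealth processes are $\mathbb{Q}$-martingales started at $x$: the first by admissibility, the second because $\mathbb{E}_{\mathbb{Q}}[h(z_0,\mathcal Y_T,T)\,|\,\mathcal Y_t=y]=h(z_0,y,t)=x$. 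Taking expectations gives $\mathbb{E}_{\mathbb{Q}}[J(\tilde{\mathcal X}_T)F(\mathcal Y_T,T)]\le\Phi(z_0,y,t)$ for every admissible control, with equality for $\tilde\alpha_s=h_y(z_0,\mathcal Y_s,s)$. This yields \eqref{u-tilda-representation} and the optimality in (iii) with no localization at all.

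Uniqueness among $\mathcal{C}^{2,2,1}$ solutions is then a separate matter. For an arbitrary smooth concave solution $v$, you would have to solve the closed-loop equation with feedback $-v_{xy}/v_{xx}$. Along that path $v_x$ is constant (differentiate the HJB in $x$), so the same tangent argument applies, but you must still prove the martingale property of $v$ along it, in a growth class you have not specified. This is precisely the work the paper delegates to the viscosity comparison result. Lastly, your step ``$A\le\theta_2A$'' needs $\theta_2\ge1$. What the proof of the preceding proposition actually yields at $t=T$ is $|h_{yz}|\le b\,|r'|\,h_z\le\theta_2Ah_z$. The maximum principle propagates this to all $t$, giving $\tilde\alpha^*_x\le\theta_2A$ directly.
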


\begin{proof}
(i)\ A straightforward modification of the results in \cite{Zariphopoulou-MOR} shows that the
value function (\ref{u-tilda}) is the unique viscosity solution of (\ref%
{HJB-tilda}) in the class of functions that are strictly concave and
strictly increasing in $x$, for each $\left( y,t\right) \in \mathbb{R\times }%
\left[ 0,T\right] $. 
\vs 
We now construct a smooth solution $\hat{w}$ of (\ref%
{HJB-tilda}) in the same class of functions which, by uniqueness, will
coincide with $w$. 
\vs
To this end, consider the terminal value problem 
\begin{equation}
\begin{split}
&K_{t}+\frac{1}{2}K_{yy}=0\  \  \text{in } \ \mathbb{R\times R\times }\left[ 0,T%
\right),\\[1.5mm]
&  K(z,y,T)=J\Big( I\big( \dfrac{e^{-z}}{F(y,T)}\big) 
\text{\ }\Big) F(y,T),  \label{heat-K}
\end{split}
\end{equation}%
which, in view of  (\ref{F-dfn}), \eqref{F-growth} and (\ref{utility}),  is
well-posed and has a unique smooth solution.
\vs
Using the terminal condition in (\ref{heat-h}), we rewrite 
\begin{equation}
K(z,y,T)=J\left( h(z,y,T)\right) F(y,T),  \label{F-G-tilda}
\end{equation}%
and find 
\begin{equation*}
K_{z}(z,y,T)=h_{z}(z,y,T)J^{\prime }\left( I\left( \frac{e^{-z}}{F(y,T)}\right) \right)
F(y,T)=e^{-z}h_{z}(z,y,T). 
\end{equation*}
In addition, for each fixed $z\in \mathbb{R},$ functions $K_{z}(z,y,t)$ and $%
e^{-z}h_{z}(z,y,t)$ solve in $\mathbb{R\times }\left[ 0,T\right) $ the same
heat equation (see (\ref{heat-h}) and (\ref{heat-K})) and, thus, by
uniqueness, for each $(x,y,t)\in  \mathbb{R\times R\times }\left[
0,T\right]$, 
\begin{equation} \label{F-h-tilda}
K_{z}(z,y,t)=e^{-z}h_{z}(z,y,t).
\end{equation}%
Next, we introduce the smooth  function $\hat{w}: \mathbb{R\times R\times }\left[ 0,T\right] \to \R$ given 
by 
\begin{equation}\label{u-F-tilda}
\hat{w}(x,y,t)=K\Big( h^{\left( -1\right) }(x,y,t),y,t\Big), 
\end{equation}%
with $h^{\left( -1\right) }$ as in Proposition 2.
\vs 
We show  that, for each $\left( y,t\right) \in \mathbb{R\times }\left[ 0,T\right] $,   $\hat w$ is strictly increasing and strictly concave in $x,$ and, in addition, it 
satisfies (\ref%
{HJB-tilda}).
\vs 

To
ease the notation, we write 
\begin{equation}\label{m-h-inverse}
p(x,y,t)=h^{\left( -1\right) }(x,y,t).
\end{equation}%
Then, for each $(x,y,t)\in \R\times\R\times[0,T]$, 
\begin{equation}
p_{t}=-\frac{h_{t}(p,y,t)}{h_{z}(p,y,t)},\text{ \  \ }p_{x}=\frac{1}{%
h_{z}(p,y,t)}  \label{m-1}
\end{equation}%
\begin{equation}
p_{xx}=-\frac{h_{zz}(p,y,t)}{\left( h_{z}(p,y,t)\right) ^{3}},\text{ \ }%
p_{y}=-\frac{h_{y}(p,y,t)}{h_{z}(p,y,t)}  \label{m-2}
\end{equation}%
\begin{equation} \label{m-3}
\begin{split}
&p_{yy}=-\frac{1}{h_{z}\left( p,y,t\right) }\left( \Big( \frac{h_{y}(p,y,t)}{%
h_{z}(p,y,t)}\Big) ^{2}h_{zz}(p,y,t)\right. \\ 
&\qquad \left. -2\frac{h_{y}(p,y,t)}{h_{z}(p,y,t)}h_{zy}(p,y,t)+h_{yy}(p,y,t)\right)
\end{split}
\end{equation} 
and 
\begin{equation}
p_{xy}=-\frac{1}{\left( h_{z}\left( p,y,t\right) \right) ^{2}}\left( -\frac{%
h_{y}(p,y,t)}{h_{z}(p,y,t)}h_{zz}(p,y,t)+h_{zy}(p,y,t)\right) ,  \label{m-4}
\end{equation}%

It follows from (\ref{F-h-tilda}) and
(\ref{u-F-tilda}) that 
\[
\hat{w}_{t}=p_{t}K_{z}(p,y,t)+K_{t}(p,y,t), 
\]%
and 
\begin{equation}
\hat{w}_{x}=p_{x}K_{z}(p,y,t)=\frac{K_{z}(p,y,t)}{h_{z}(p,y,t)}=e^{-p}.
\label{v-monotone-tilda}
\end{equation}%
Furthermore, 
\begin{equation}
\hat{w}_{xx}=-p_{x}\hat{w}_{x}\text{,\ }\tilde{w}_{xy}=-p_{y}\tilde{w}_{x}%
\text{ \ and \ }\tilde{w}_{y}=p_{y}K_{z}(p,y,t)+K_{y}(p,y,t),\text{\ }
\label{v-concave-tilda}
\end{equation}%
and 
\[
\tilde{w}_{yy}=p_{yy}K_{z}(p,y,t)+\left( p_{y}\right)
^{2}K_{zz}(p,y,t)+2p_{y}K_{zy}(p,y,t)+K_{yy}(p,y,t). 
\]%
Next, we use that $K$ solves (\ref{heat-K}) and  show that $\hat{w}$ satisfies (\ref{HJB-tilda}). Indeed, 
\begin{equation*}
\begin{split}
& \hat{w}_{t}-\frac{1}{2}\frac{\left( \hat{w}_{xy}\right) ^{2}}{\hat{w}_{xx}}+%
\frac{1}{2}\hat{w}_{yy} \\
&=\left( p_{t}+\frac{1}{2}p_{yy}\right) K_{z}(p,y,t)+\frac{1}{2}\frac{\left(
p_{y}\right) ^{2}}{p_{x}}\hat{w}_{x}+\frac{1}{2}\left( p_{y}\right)
^{2}K_{zz}(p,y,t)+p_{y}K_{zy}(p,y,t). 
\end{split}
\end{equation*}
In addition, from (\ref{F-h-tilda}) we 
 deduce that
\[
K_{zz}=-e^{-z}h_{z}+e^{-z}h_{zz}\text{ \  \ and \ } \ K_{zy}=e^{-z}h_{zy}.%
\]%
Combining the above and using (\ref{v-monotone-tilda}), we find, after using (\ref{heat-h}), that%
\begin{equation*}
\hat{w}_{t}-\frac{1}{2}\frac{\left( \hat{w}_{xy}\right) ^{2}}{\hat{w}_{xx}}+%
\frac{1}{2}\hat{w}_{yy}  =-\big( h_{t}+\frac{1}{2}h_{yy}\big) h_{z}=0.
\end{equation*}

Finally, we deduce from (\ref{v-monotone-tilda}) that $\hat{w}_{x}(x,y,t)>0$.  
Then  (\ref{v-concave-tilda}) together with the monotonicity of $%
h^{\left( -1\right) }$\ yield $\hat{w}_{xx}(x,y,t)<0$. 
We easily conclude
that $\hat{w}\equiv w$ in $\mathbb{R\times R\times }\left[ 0,T\right] $.
\vs

To show (\ref{u-tilda-representation}), we first observe that (\ref{heat-K}%
)\ yields a probabilistic representation for $K$, that is, 
\bee
\begin{split}
K(z,y,t)&=\mathbb{E}_{\mathbb{Q}}\left[ \left. J\left( I\left( \frac{e^{-z}}{%
F(\mathcal{Y}_{T},T)}\right) \text{\ }\right) F(\mathcal{Y}%
_{T},T)\right
\vert \mathcal{Y}_{t}=y\right]\\[1.5mm]
&=\mathbb{E}_{\mathbb{Q}}\left[ \left. J\left( h\left( z,\mathcal{Y}%
_{T},T\right) \right) F(\mathcal{Y}_{T},T)\right \vert \mathcal{Y}_{t}=y%
\right] , 
\end{split}
\eee
where the last equality follows from the terminal condition in (\ref{heat-h}%
). Then, (\ref{u-tilda-representation})  and \eqref{w-h-ez} follow respectively  from (\ref{u-F-tilda}),
using that $\hat{w}\equiv w$ and  (\ref{v-monotone-tilda}).
\vs
(ii) The first order conditions in (\ref{HJB-tilda}) imply the first equality
in (\ref{a-optimal-tilda}). For the second equality, we use \eqref{m-h-inverse}, (\ref{m-1}), (\ref%
{m-2})\ and (\ref{v-monotone-tilda}) to obtain%
\begin{equation}
\tilde{\alpha}^{\ast }(x,y,t)=-\frac{p_{y}(x,y,t)}{p_{x}(x,y,t)}%
=h_{y}(p,y,t)=h_{y}(h^{\left( -1\right) }\left( x,y,t\right) ,y,t).
\label{a-h-explicit}
\end{equation}%
The first inequality in (\ref{a-estimate}) follows from (\ref{heat-2}). For
the second, we observe that 
\[
\tilde{\alpha}_{x}^{\ast }(x,y,t)=\frac{h_{yz}(h^{\left( -1\right) }\left(
x,y,t\right) ,y,t)}{h_{z}(h^{\left( -1\right) }\left( x,y,t\right) ,y,t)}, 
\]%
and use (\ref{heat-3}).
\vs
(iii)\ Using the feedback policy (\ref{a-h-explicit}), the state controlled sde
in (\ref{X-Y-tilda}) becomes 
\begin{equation}
d\mathcal{\tilde{X}}_{s}^{\ast }=\tilde{\alpha}^{\ast }(\mathcal{\tilde{X}}%
_{s}^{\ast },\mathcal{Y}_{s},s)dW_{s}^{\mathbb{Q}} \  \text{in} \ (t,T] \ \ 
 \text{and} \ \ \mathcal{\tilde{%
X}}_{t}^{\ast }=x. 
\label{X-tilda-sde}
\end{equation}%
We claim that, for each fixed $\left( y,t\right)\in \R\times[0,T]$,  (\ref{X-tilda-sde})\ has a unique strong solution 
given by 
\begin{equation}\label{X-explicit}
\mathcal{\tilde{X}}_{s}^{\ast }=h\left( h^{\left( -1\right) }\left(
x,y,t\right) ,\mathcal{Y}_{s},s\right) \ \text{in} \ (t,T] \  \text{and} \ \  \mathcal{\tilde{X}}%
_{t}^{\ast }=x. 
\end{equation}%
We begin by  verifying  that $\mathcal{\tilde{X}}^{\ast }$ satisfies (%
\ref{X-tilda-sde}). Indeed, applying Ito's rule to (\ref{X-explicit}) and using  (\ref{heat-h}) gives%
\bee
dh\left( h^{\left( -1\right) }\left( x,y,t\right) ,\mathcal{Y}_{s},s\right)=h_{y}\left( h^{\left( -1\right) }\left(
x,y,t\right) ,\mathcal{Y}_{s},s\right) dW_{s}^{\mathbb{Q}}.
\eee
From (\ref{a-estimate}), we have that, for $s\in [t,T]$, %
\[
\Big( h_{y}\left( h^{\left( -1\right) }\left( x,y,t\right) ,\mathcal{Y}%
_{s},s\right) \Big) ^{2}\leq Ax^{2}+Be^{A\left( T-t\right) }, 
\]%
and, thus, 
\begin{equation*}
\begin{split}
& \mathbb{E}_{\mathbb{Q}}\int_{t}^{T}\left( h_{y}\left( h^{\left( -1\right)
}\left( x,y,t\right) ,\mathcal{Y}_{s},s\right) \right) ^{2} \dd s\\
&\qquad \leq A\big(h^{\left( -1\right) }\left( x,y,t\right)
\big)^{2}(T-t)+\int_{t}^{T}Be^{A\left( T-s)\right) } \dd s.
\end{split}
\end{equation*}%
Moreover,  (\ref{a-h-explicit}) evaluated at (\ref{X-explicit})
yields 
\bee
 \tilde{\alpha}^{\ast }(\mathcal{\tilde{X}}_{s}^{\ast },\mathcal{Y}_{s},s) =
\tilde{\alpha}^{\ast }(h\left( h^{\left( -1\right) }\left( x,y,t\right) ,%
\mathcal{Y}_{s},s\right) ,\mathcal{Y}_{s},s)=h_{y}\left( h^{\left( -1\right) }\left( x,y,t\right) ,\mathcal{Y}%
_{s},s\right), 
\eee
and both equalities in (\ref{optimal-processes-tilda}) follow.
\vs
Lastly, we show that for each fixed  $\left( y,t\right) \in \R\times[0,T],$   (\ref%
{X-tilda-sde})\ has a unique solution. Arguing  by contradiction, we assume 
that there exist two solutions, say $\mathcal{\tilde{X}}_{1}$ and $\mathcal{%
\tilde{X}}_{2},$ with $\mathcal{\tilde X}_{1,t}=\mathcal{\tilde X}_{2,t}=x$. Then,   (\ref{X-tilda-sde}) and  (\ref{a-estimate}) yield, for 
$s\in [t,T],$  that 
\begin{equation}
\begin{split}
\mathbb{E}_{\mathbb{Q}}\left( \mathcal{\tilde{X}}_{1,s}-\mathcal{\tilde{X}}%
_{2,s}\right) ^{2}&=\mathbb{E}_{\mathbb{Q}}\int_{t}^{s}\left( \tilde{\alpha}%
^{\ast }(\mathcal{\tilde{X}}_{1,\rho },\mathcal{Y}_{\rho },\rho )-\tilde{%
\alpha}^{\ast }(\mathcal{\tilde{X}}_{2,\rho },\mathcal{Y}_{\rho },\rho
)\right) ^{2}d\rho \\ \label{uniqueness - estimate}
&\leq \theta _{2}A\mathbb{E}_{\mathbb{Q}}\int_{t}^{s}\left( \mathcal{\tilde{X}%
}_{1,\rho }-\mathcal{\tilde{X}}_{2,\rho }\right) ^{2}d\rho , 
\end{split}
\ee
and uniqueness follows from Gr\"{o}nwall's inequality. 
\end{proof}

\begin{remark}
Alternatively  (\ref{u-tilda-representation})\ may be derived from (%
\ref{u-tilda}) and (\ref{optimal-processes-tilda}), using that at $t=T,$ we
have that $\tilde{\mathcal{X}}_{T}^{\ast }=h\left( h^{\left( -1\right) }\left(
x,y,t\right) ,\mathcal{Y}_{T},T\right) $.
\end{remark}
\vs

We now revert the analysis to the original problem (\ref{u-DFN}), starting
with the specification and regularity of its solution.

\begin{proposition}
The value function $u$ defined in  (\ref{u-DFN}) is the unique in \\$\mathcal{C}^{2,2,1}\left( \mathbb{R\times
R\times }\left[ 0,T\right] \right) $, strictly concave and
strictly increasing in $x,$ solution to  the terminal value HJB\ equation 
\begin{equation}\label{HJB-single}
\begin{cases}
& u_{t}+\max_{\alpha }\left( \frac{1}{2}\alpha ^{2}u_{xx}+\alpha \left(
bu_{x}+u_{xy}\right) \right) +\frac{1}{2}u_{yy}+bu_{y} =\\[1.5mm]
&u_{t}-\dfrac{\left( bu_{x}+u_{xy}\right) ^{2}}{2 u_{xx}}+\frac{%
1}{2}u_{yy}+bu_{y}=0\text{ \ in \ }\mathbb{R\times R\times }\left[
0,T\right), \\[1.75mm]
& u(x,y,T)=J(x),
\end{cases}
\ee
and is given by 
\begin{equation}
u(x,y,t)=\frac{w(x,y,t)}{F(y,t)},  \label{u-w-F}
\end{equation}%
with $w$ as in Proposition 3 and $F$ as in \eqref{F-dfn}. 
\end{proposition}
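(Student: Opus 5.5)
The plan is to transfer everything from the auxiliary problem of Proposition 3 by a change of measure combined with a direct PDE computation.

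\emph{Step 1: the measure change.} Recall that $F$ solves the backward heat equation \eqref{F-equation}. Hence, under $\mathbb{Q}$ with $\mathcal{Y}$ a $\mathbb{Q}$-Brownian motion, the process $F(\mathcal{Y}_s,s)/F(y,t)$ is a positive martingale. Its differential is $b(\mathcal{Y}_s,s)\,dW^{\mathbb{Q}}_s$ times itself, and $b$ is bounded by \eqref{b-bounds}.

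Define $\mathbb{P}$ through $d\mathbb{P}/d\mathbb{Q}=F(\mathcal{Y}_T,T)/F(y,t)$. By Girsanov, $W_s=W^{\mathbb{Q}}_s-\int_t^s b(\mathcal{Y}_\rho,\rho)\,d\rho$ is a $\mathbb{P}$-Brownian motion. Consequently $\mathcal{Y}$ satisfies $d\mathcal{Y}=b\,ds+dW$, which is \eqref{Y-reduced}, and a wealth process $d\tilde{\mathcal{X}}=\tilde\alpha\,dW^{\mathbb{Q}}$ becomes $d\tilde{\mathcal{X}}=b\tilde\alpha\,ds+\tilde\alpha\,dW$, which is \eqref{takis2000}.

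The filtrations coincide, since both are generated by the observation process. The admissible classes also coincide: the density and its inverse have moments of all orders, because $b$ is bounded. It follows that for each $\alpha$,
\[
\mathbb{E}_{\mathbb{P}}[J(\mathcal{X}_T)] = \mathbb{E}_{\mathbb{Q}}\big[J(\tilde{\mathcal{X}}_T)F(\mathcal{Y}_T,T)\big]/F(y,t).
\]
Taking suprema gives \eqref{u-w-F}.

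\emph{Step 2: regularity and the HJB equation.} By Proposition 3, $w\in\mathcal{C}^{2,2,1}$ is strictly increasing and strictly concave in $x$. Since $F>0$ is smooth and independent of $x$, the function $u=w/F$ inherits these properties.

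Substitute $w=uF$ into \eqref{HJB-tilda}. This gives $w_x=Fu_x$, $w_{xx}=Fu_{xx}$ and $w_{xy}=F(u_{xy}+bu_x)$, using $F_y=bF$. The remaining terms are
\[
w_t+\tfrac12 w_{yy}=F\big(u_t+\tfrac12u_{yy}+bu_y\big)+u\big(F_t+\tfrac12F_{yy}\big),
\]
and the last bracket vanishes by \eqref{F-equation}. Dividing by $F$ yields exactly \eqref{HJB-single}, with terminal condition $u(x,y,T)=J(x)$.

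\emph{Step 3: uniqueness.} Take any solution $v$ of \eqref{HJB-single} in the stated class. The same algebra run backwards shows that $vF$ solves \eqref{HJB-tilda} and is strictly concave and increasing in $x$. Proposition 3(i), through the viscosity uniqueness result adapted from \cite{Zariphopoulou-MOR}, then forces $vF=w$, so $v=u$.

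\emph{Main obstacle.} The delicate point is Step 1. The equivalence must be made rigorous on $[t,T]$ starting from a generic $Y_t=y$ rather than from the original filtering setup. In addition, admissibility, meaning square integrability under $\mathbb{P}$ versus under $\mathbb{Q}$, must be preserved in both directions. I will handle this with Hölder's inequality and the boundedness of $b$, and if needed restrict to controls for which both integrability conditions hold. A localization argument will show that the suprema are unaffected.
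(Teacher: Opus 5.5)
Your proposal is correct, but it establishes $u=w/F$ by a different mechanism than the paper.

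\textbf{The paper's route.} The argument stays purely analytic. It invokes a viscosity-uniqueness result for \eqref{HJB-single} itself (adapted from \cite{Zariphopoulou-MOR}) in the class of functions strictly concave and increasing in $x$. It then checks, by the same substitution as your Step 2, that $w/F$ is a classical solution in that class. Both the identity and the uniqueness claim are read off from that single citation.

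\textbf{Your route.} You prove the identity at the level of the control problems. You use the reference-measure change under which $F(\mathcal{Y}_T,T)/F(y,t)$ is the likelihood ratio turning the driftless $\mathcal{Y}$ into the observation process. You then obtain uniqueness by pulling a competitor back to \eqref{HJB-tilda} and applying Proposition 3.

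\textbf{What each buys.} Your route needs only the uniqueness theorem already used for $w$. It spares you verifying the dynamic programming and viscosity characterization for the problem with drift $b$. It also makes explicit why the $\mathbb{Q}$ of the auxiliary problem is the pricing measure alluded to in the footnote of the indifference-valuation subsection. The paper's route is shorter and never has to compare admissible classes.

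\textbf{The thin spot in Step 1.} That comparison of admissible classes is where your argument needs more care. Square integrability of $\alpha$ under $\mathbb{P}$ and under $\mathbb{Q}$ are genuinely different conditions. H\"older with moments of the density does not convert one into the other; it would need $\int\alpha^2\in L^p$ for some $p>1$. Truncating controls instead requires uniform integrability of $J^-$ along the truncations, which is not automatic when $J$ is unbounded below.

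\textbf{A clean fix.} Use the conjugate bound $J(X_T)\le\tilde J(\lambda\xi_T)+\lambda\xi_TX_T$, where $\tilde J(\eta)=\sup_x(J(x)-\eta x)$ and $\xi_T=F(y,t)/F(\mathcal{Y}_T,T)=d\mathbb{Q}/d\mathbb{P}$.
\begin{itemize}
\item For $\alpha$ square integrable under $\mathbb{P}$, $\xi X$ is a true $\mathbb{P}$-martingale (by BDG, since $\xi$ has all moments).
\item For $\alpha$ square integrable under $\mathbb{Q}$, $X$ is a $\mathbb{Q}$-martingale.
\end{itemize}
Either way, the objective is bounded by the same dual quantity, up to the factor $F(y,t)$.

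With $z=h^{(-1)}(x,y,t)$ and $\lambda=e^{-z}/F(y,t)$, the control $\tilde\alpha^*$ attains this bound. Indeed, by \eqref{optimal-processes-tilda} and \eqref{heat-h} its terminal wealth is $I(\lambda\xi_T)$. Moreover, $\tilde\alpha^*$ lies in both admissible classes by the linear growth in \eqref{a-estimate}. Hence both suprema equal the value of $\tilde\alpha^*$, and $u=w/F$ holds whichever admissibility class is used.
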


\begin{proof}
A straightforward adaptation of the results in \cite{Zariphopoulou-MOR}
yield that the value function (\ref{u-DFN}) is the unique viscosity solution
in the class of strictly increasing and strictly concave in $x$ solutions,
for each $\left( y,t\right) \in \mathbb{R\times }\left[ 0,T\right]$, solutions.
\vs

Let $\hat{u}=\dfrac{w}{F}$. We show that $\hat{u}\equiv
u $ on  $\mathbb{R\times R\times }\left[ 0,T\right] .$ 
\vs
For this, it suffices
to establish that $\hat{u}\in \mathcal{C}^{2,2,1}\left( \mathbb{R\times
R\times }\left[ 0,T\right] \right)$ is strictly concave and strictly
increasing in $x$ for each $\left( y,t\right) \in \mathbb{R\times }\left[
0,T\right] ,$ and that it satisfies (\ref{HJB-single}). The first three
properties follow easily from the analogous properties of $w,$ so we only
show that $\hat{u}$ solves (\ref{HJB-single}). 
\vs
Direct
calculations yield,
after substituting the above  in (\ref{HJB-tilda}) and using  that $w$ solves (\ref{HJB-tilda}) and (\ref%
{F-dfn}),  that %
\bee
w_{t}-\frac{1}{2}\frac{\left( w_{xy}\right) ^{2}}{w_{xx}}+\frac{1}{2}w_{yy} =\big( \hat{u}_{t}-\frac{1}{2}\frac{( b\hat{u}_{xy}+\hat{u}%
_{x}) ^{2}}{\hat{u}_{xx}}+\frac{1}{2}\hat{u}_{yy}+b(y,t)\hat{u}%
_{y}\big) F=0,
\eee
Since $F>0$ in $\mathbb{%
R\times }\left[ 0,T\right] $, we easily conclude.

\end{proof}

Next, we introduce two auxiliary functions, $H$ and $k,$ that will be used
in the construction of the optimal processes $X^{\ast }$ and $\alpha ^{\ast
} $. 

\begin{proposition}
Let $k$ solve the terminal value problem \eqref{c-k.2}. 
Then $H:\R\times \R \times [0,T]\to \R$ given by 
\begin{equation}
H(z,y,t)=\left( u_{x}\right) ^{\left( -1\right) }\left(
e^{-z+k(y,t)},y,t\right), 
 \label{h-u-inverse}
\end{equation}%
where $(-1)$ denotes the inverse in the $x-$argument,  is  a $\mathcal{C}%
^{2,2,1}\left( \mathbb{R\times R\times }\left[ 0,T\right] \right) $ 
solution to  the terminal value problem  \eqref{H-intro} and \eqref{takis2} 
where  $c:\R\times[0,T]\to \R$ given by \eqref{c-k.1}
solves the terminal value problem
\begin{equation}\label{c-heat-eqn}
c_{t}+\frac{1}{2}c_{yy}=0\text{ \ in \ }\mathbb{R\times }\left[ 0,T\right)\ \ \text{and} \ \ 
c(y,T)=b(y,T).
\end{equation}%
\end{proposition}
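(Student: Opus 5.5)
The plan is to identify $H$ explicitly in terms of the auxiliary function $h$ of Proposition 2, and then verify \eqref{H-intro}, \eqref{takis2} and \eqref{c-heat-eqn} by direct differentiation.

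\emph{Step 1: the function $c$.} Set $\ell=\log F$, so that $\ell_y=b$ by \eqref{F-dfn}. From \eqref{F-equation} we get
$$\ell_t+\tfrac12\ell_{yy}=-\tfrac12 b^2 .$$
Since $b$ is bounded by \eqref{b-bounds}, the terminal value problem \eqref{c-k.2} has a unique bounded smooth solution $k$, given by the Feynman--Kac formula. Put $\varphi=k+\ell$. Then
$$\varphi_t+\tfrac12\varphi_{yy}=\tfrac12 b^2-\tfrac12 b^2=0 .$$
Differentiating in $y$ shows that $c=\varphi_y=b+k_y$ solves the backward heat equation. Moreover $k(\cdot,T)=0$ gives $k_y(\cdot,T)=0$, so $c(y,T)=b(y,T)$, which is \eqref{c-heat-eqn}.

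\emph{Step 2: identification of $H$.} By \eqref{u-w-F} we have $u_x=w_x/F$. Then \eqref{w-h-ez} gives
$$u_x\bigl(h(z',y,t),y,t\bigr)=e^{-z'}/F(y,t).$$
Choosing $z'=z-k(y,t)-\log F(y,t)$ makes the right-hand side equal to $e^{-z+k(y,t)}$. Hence
$$H(z,y,t)=h\bigl(z-\varphi(y,t),y,t\bigr).$$
Here we use that $u_x(\cdot,y,t)$ is a strictly decreasing bijection onto $(0,\infty)$. This follows from $h_z>0$ and the limits in \eqref{heat-1}: $h(\cdot,y,t)$ is a bijection of $\mathbb{R}$, and $z'\mapsto e^{-z'}/F$ is a bijection onto $(0,\infty)$. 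In particular $H$ is well defined and belongs to $\mathcal{C}^{2,2,1}$, since $h$ and $\varphi$ are smooth. At $t=T$ we have $\varphi(y,T)=\log F(y,T)$, so \eqref{heat-h} gives
$$H(z,y,T)=I\bigl(e^{-z}F(y,T)/F(y,T)\bigr)=I(e^{-z}),$$
which is \eqref{takis2}.

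\emph{Step 3: the PDE.} All derivatives of $h$ below are evaluated at $(z-\varphi,y,t)$. The chain rule gives
$$H_t=h_t-\varphi_t h_z,\qquad H_{zz}=h_{zz},\qquad H_{zy}=h_{zy}-c\,h_{zz},$$
$$H_{yy}=h_{yy}-2c\,h_{zy}+c^2h_{zz}-c_y h_z .$$
Substituting into $H_t+\tfrac12c^2H_{zz}+cH_{zy}+\tfrac12H_{yy}$, the $h_{zz}$ terms cancel, since $\tfrac12c^2-c^2+\tfrac12c^2=0$. The $h_{zy}$ terms also cancel, and we are left with
$$\bigl(h_t+\tfrac12h_{yy}\bigr)-\bigl(\varphi_t+\tfrac12\varphi_{yy}\bigr)h_z=0,$$
using $c_y=\varphi_{yy}$, \eqref{heat-h} and Step 1.

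The main obstacle is not the computation but the justification in Steps 1–2. One must check that $k$ is well posed and smooth; this is straightforward because $b$ and its derivatives are bounded. One must also check that the inverse $(u_x)^{(-1)}$ is defined on all of $(0,\infty)$, which relies on the surjectivity statements in Proposition 2(i). I would verify these first, and then carry out the cancellation computation.
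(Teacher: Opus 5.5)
Your proposal is correct and follows essentially the paper's route: you identify $H(z,y,t)=h(z-n(y,t),y,t)$ with $n=k+\ln F$, use $n_t+\frac12 n_{yy}=0$ and $n_y=c$, and then let the chain-rule terms cancel. The only small differences are that you obtain the heat equation for $c$ by differentiating the one for $n$ in $y$, where the paper goes through the identity $b_t+\frac12 b_{yy}+bb_y=0$, and that you explicitly justify that $u_x(\cdot,y,t)$ maps onto $(0,\infty)$, which the paper leaves implicit.
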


Looking at \eqref{w-h-ez}  and \eqref{h-u-inverse}, it is clear that $H$  resembles the auxiliary function $h$.
We stress, however, that contrary to \eqref{w-h-ez}, the equation satisfied by $H$ is not uniformly
elliptic, and, therefore, it is not a priori known whether a smooth solution
exists.

\begin{proof}
It follows easily from the assumptions that $k$ is well defined, since for all $(y,t)\in \R\times [0,T]$, %
\[
-\theta _{2}(T-t)<k(y,t)<-\theta _{1}(T-t). 
\]%
Furthermore, (\ref{u-w-F}) yields that $u_{x}>0$ in $\mathbb{R\times
R\times }\left[ 0,T\right] $\ and, therefore,  for each $\left( y,t\right) \in \mathbb{R\times }\left[ 0,T\right],$  the map  
 $x\to u_{x}(x,y,t)$ is invertible, and, thus, $H\ $is well defined. 
 \vs
 Rewriting (\ref{h-u-inverse}) as 
\begin{equation}
u_{x}(H(z,y,t)y,t)=e^{-z+k(y,t)},  \label{u-aux}
\end{equation}%
and using (\ref{w-h-ez}) and (\ref{u-w-F}), we obtain that, for each $(z,y,t) \in \mathbb{R\times }\R \times \left[ 0,T\right]$, 
\begin{equation}
H(z,y,t)=h(z-n(y,t),y,t), 
\label{h-h(0)}
\end{equation}%
with 
\begin{equation}
n=k+\ln F.  \label{n(y,t)-dfn}
\end{equation}%
Then, (\ref{h-h(0)})\ yields 
\[
H_{t}=-n_{t}h_{z}+h_{t},\text{ }H_{z}=h_{z},\text{ }H_{zz}=h_{z},  \ H_{xy}=-n_{y}h_{zz}+h_{zy},\text{ }H_{y}=-n_{y}h_{z}+h_{y},
\]%
and%
\[
H_{yy}=-n_{yy}h_{z}+n_{y}^{2}h_{zz}-2n_{y}h_{zy}+h_{yy}, 
\]%
where $h$ and its derivatives are evaluated at $\left( z-n(y,t),y,t\right) .$%
\vs
Moreover, (\ref{n(y,t)-dfn})\ gives 
\begin{equation*}
\frac{F_{y}}{F}+k_{y}-n_{y}=0\text{ \  \ in \ }\mathbb{R\times }\left[ 0,T%
\right].  
\end{equation*}%

Using (\ref{F-dfn}), (\ref{F-equation}) and (\ref{h-u-inverse}), we
deduce that 
\bee
n_{t}+\frac{1}{2}n_{yy}=\frac{1}{F}\big( F_{t}+\frac{1}{2}F_{yy}\big) +\big( k_{t}+\frac{1}{2}%
k_{yy}-\left( \frac{F_{y}}{F}\right) ^{2}\big)=k_{t}+\frac{1}{2}k_{yy}-\frac{1}{2}b^{2}=0.\\ 
\eee

Recalling \eqref{c-k.1} we obtain, after some routine calculations,  that
\bee
\begin{split}
&H_{t}+\frac{1}{2}c^{2}H_{zz}+cH_{zy}+\frac{1}{2}H_{yy} \\
&=\big( h_{t}+\frac{1}{2}%
h_{yy}\big) -\big( n_{t}+\frac{1}{2}n_{yy}\big) h_{z}\\
&+\frac{1}{2}\big( \frac{F_{y}}{F}+k_{y}-n_{y}\big) ^{2}h_{zz}+\big( 
\frac{F_{y}}{F}+k_{y}-n_{y}\big) h_{zy}=0. 
\end{split}
\eee

Finally, to show (\ref{c-heat-eqn}) we use \eqref{c-k.1} and \eqref{c-k.2} to obtain 
\[
c_{t}+\frac{1}{2}c_{yy}=b_{t}+\frac{1}{2}b_{yy}+bb_{y}. 
\]%
On the other hand, (\ref{F-dfn})  yields%
\[
F_{yt}=b_{t}F+bF_{t}\text{ \ and \ }F_{yyy}=b_{yy}F+2b_{y}F_{y}+bF_{yy}, 
\]%
which together with (\ref{F-equation}) imply 
\bee
\begin{split}
b_{t}F+bF_{t}+\frac{1}{2}\left( b_{yy}F+2b_{y}F_{y}+bF_{yy}\right)= \big( b_{t}+\frac{1}{2}b_{yy}+b_{y}\frac{F_{y}}{F}\big) F=0. \\
\end{split}
\eee
The claim follows  since $F>0$ in $\mathbb{R\times }\left[ 0,T\right] .$

\end{proof}

We are now ready to state  the main results which provide closed-form
expressions for the value function $u$, the optimal wealth
process  $\mathcal{X}^\star$ and the associated optimal control process $\alpha^\star$.
As mentioned earlier, these results are, to the best of
our knowledge, new and of independent interest.

\begin{theorem}\label{ord200}
The value function $u$ in (\ref{u-DFN}) is represented, for $(x,y,t)\in \R\times \R\times [0,T]$,   %
 as%
\begin{equation} \label{u-thm}
u(x,y,t)= \mathbb{E}\left[ \left. J\left( H(H^{(-1)}(x,y,t)+L_{t,T},Y_{T},T)\right)
\right \vert X_{t}=x,Y_{t}=y\right], 
\end{equation}%
with the process $L$ defined by \eqref{L-intro}
with $H$ and $c$ solving  respectively   \eqref{H-intro}, \eqref{takis2}  and \eqref{c-heat-eqn}. 
\vs
The optimal wealth process $\mathcal{X}^{\ast }$ is as 
\begin{equation} \label{x-thm}
\mathcal{X}_{s}^{\ast }=H(H^{(-1)}(x,y,t)+L_{t,s},Y_{s},s) \ \text{in} \  (t,T] \ \ \text{and} \ \ \mathcal{X}_{t}^{\ast }=x,
\end{equation}%
and 
the optimal feedback control $\alpha ^{\ast }$ is given by%
\begin{equation}
\alpha ^{\ast }=-\frac{bu_{x} +u_{xy}}{u_{xx} }\text{ \  \ in \ }\mathbb{R\times
R\times }\left[ 0,T\right] ,  \label{a-R-thm}
\end{equation}%
and satisfies, with $\tilde{\alpha}^{\ast }$ as in (\ref{a-optimal-tilda}), %
\begin{equation}
\alpha ^{\ast }=\tilde{\alpha}^{\ast } \text{ \ in \ }\mathbb{%
R\times R\times }\left[ 0,T\right].  \label{pi-equals-alpha}
\end{equation}%
\vs
Moreover, if  $A$ and $B$ are as in \eqref{utility}, then, for each $(x,y,t)\in \mathbb{%
R\times R\times }\left[ 0,T\right]$,  
\begin{equation}
0<\alpha ^{\ast }(x,y,t)\leq \theta _{2}\sqrt{Ax^{2}+Be^{A\left( T-t\right) }%
}\text{\  and \  }\alpha _{x}^{\ast }(x,y,t)\leq \theta _{2}A .   \label{a-estimates}
\end{equation}%

The optimal control process $\alpha ^{\ast }$ is given by 
\begin{equation}
\alpha _{s}^{\ast }=\alpha ^{\ast }(\mathcal{X}_{s}^{\ast },Y_{s},s),
\label{a-feedback-process}
\end{equation}%
with $\mathcal{X}^{\ast }$ solving the sde
\begin{equation}
d\mathcal{X}_{s}^{\ast }=b(Y_{s},s)\alpha ^{\ast }(\mathcal{X}_{s}^{\ast
},Y_{s},s)ds+\alpha ^{\ast }(\mathcal{X}_{s}^{\ast },Y_{s},s)dW_{s} \ \;\text{in} \ \; (t,T] \ \text{ \ and  
\ } \ \mathcal{X}_{t}^{\ast }=x,  \label{x-feedback}
\end{equation}%
and is represented, for $s\in [t,T]$, as 
\begin{equation}\label{a-process-thm}
\begin{split}
\alpha _{s}^{\ast }&=c(Y_{s},s)H_{z}(H^{(-1)}(x,y,t)+L_{t,s},Y_{s},s)\\
& \qquad \qquad +H_{y}(H^{(-1)}(x,y,t)+L_{t,s},Y_{s},s)\\
&=\alpha ^{\ast }\left( H\left( H^{\left( -1\right) }\left( x,y,t\right)
+L_{t,s},Y_{s},s\right) ,Y_{s},s\right).
\end{split}
\ee
\end{theorem}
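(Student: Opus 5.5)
The plan is to transfer the structure of Proposition 3 to the original problem, using the dictionary $u=w/F$ (Proposition 4) and $H(z,y,t)=h(z-n(y,t),y,t)$ with $n=k+\ln F$ (Proposition 5). The first step is the feedback identity \eqref{pi-equals-alpha}. Write $w=uF$. Then
\[
w_x=u_xF,\qquad w_{xx}=u_{xx}F,\qquad w_{xy}=u_{xy}F+u_xF_y=(u_{xy}+bu_x)F.
\]
Hence $-w_{xy}/w_{xx}=-(bu_x+u_{xy})/u_{xx}$. Since $u$ is $\mathcal{C}^{2,2,1}$ and strictly concave, the first order condition in \eqref{HJB-single} gives \eqref{a-R-thm}, and this computation shows $\alpha^\ast=\tilde\alpha^\ast$. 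The estimates \eqref{a-estimates} are then exactly \eqref{a-estimate}.

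Next I will identify the optimal wealth process. Set $Z_s=H^{(-1)}(x,y,t)+L_{t,s}$ and $\mathcal{X}_s=H(Z_s,Y_s,s)$, where $Y$ follows \eqref{Y-reduced} and $dL=bc\,ds+c\,dW$. By It\^o's rule,
\[
d\mathcal{X}_s=\Big(H_t+H_y b+H_z bc+\tfrac12 c^2H_{zz}+cH_{zy}+\tfrac12H_{yy}\Big)ds+(cH_z+H_y)\,dW_s.
\]
By \eqref{H-intro} the drift collapses to $b(cH_z+H_y)$, so $d\mathcal{X}_s=b\beta_s\,ds+\beta_s\,dW_s$ with $\beta_s=cH_z+H_y$ evaluated at $(Z_s,Y_s,s)$.

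It remains to check that $\beta_s=\alpha^\ast(\mathcal{X}_s,Y_s,s)$, which is the content of the second line of \eqref{a-process-thm}. Using $H=h(z-n,\cdot,\cdot)$ and $c=b+k_y=n_y$ (because $n_y=F_y/F+k_y$), I get $cH_z+H_y=n_yh_z+h_y-n_yh_z=h_y(z-n,y,t)$. Moreover, $H^{(-1)}(x',y,t)=h^{(-1)}(x',y,t)+n$, so
\[
\alpha^\ast(H(z,y,t),y,t)=\tilde\alpha^\ast(h(z-n,y,t),y,t)=h_y(z-n,y,t)
\]
by \eqref{a-optimal-tilda}. Thus $\mathcal{X}$ solves the feedback SDE \eqref{x-feedback}, with $\mathcal{X}_t=H(H^{(-1)}(x,y,t),y,t)=x$. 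Since $\alpha^\ast_x\le\theta_2A$ and $\alpha^\ast>0$, the coefficient is one-sided Lipschitz. Strictly, the Gr\"onwall argument of \eqref{uniqueness - estimate} needs a two-sided bound; I expect to get it from \eqref{heat-3}, which gives $|\tilde\alpha^\ast_x|\le A$. With that bound, pathwise uniqueness holds and \eqref{x-thm} and \eqref{a-process-thm} follow. Admissibility, i.e. $\mathbb{E}\int\alpha_s^{\ast2}ds<\infty$, follows from the linear growth bound in \eqref{a-estimates} together with standard second moment bounds on $\mathcal{X}$, since $b$ is bounded.

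For optimality and the representation \eqref{u-thm}, the plan is a verification argument. Because $u$ is a $\mathcal{C}^{2,2,1}$ solution of \eqref{HJB-single}, $u(\mathcal{X}^\alpha_s,Y_s,s)$ is a local supermartingale for every admissible $\alpha$, and a local martingale for $\alpha^\ast$. This gives $\mathbb{E}[J(\mathcal{X}^\ast_T)]=u(x,y,t)$, which is \eqref{u-thm} once we insert $\mathcal{X}^\ast_T=H(Z_T,Y_T,T)$.

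The main obstacle is promoting the local martingale to a true martingale, which requires integrability of $u_x\alpha^\ast$ and $u_y+\cdots$ along the optimal path. I would first try the bound $u_x(\mathcal{X}^\ast_s,Y_s,s)=e^{-Z_s+k}$, where $Z_s$ is Gaussian-like with bounded coefficients and therefore has all exponential moments. Combined with the growth of $\alpha^\ast$ and concavity of $u$, this gives uniform integrability. As an alternative, I would obtain the representation from the change of measure $d\mathbb{Q}/d\mathbb{P}=F(Y_T,T)^{-1}F(y,t)$ and Proposition 3(i), using $\mathcal{X}^\ast_T=h(h^{(-1)}(x,y,t),Y_T,T)$.
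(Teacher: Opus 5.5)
Your proposal is correct and follows essentially the paper's route: $\alpha^\ast=\tilde\alpha^\ast$ from \eqref{u-w-F}, It\^o's rule on $H(H^{(-1)}(x,y,t)+L_{t,s},Y_s,s)$ using \eqref{H-intro}, the identity $\alpha^\ast(H,y,t)=cH_z+H_y$, and Gr\"onwall uniqueness, with \eqref{u-thm} read off at $s=T$; the one small difference is that you derive the identity from \eqref{h-h(0)} and $c=n_y$, whereas the paper differentiates $u_x(H,y,t)=e^{-z+k}$ in $z$ and $y$. Your added care is justified on two points the paper only asserts: the Gr\"onwall step needs the two-sided bound from \eqref{heat-3}, not the one-sided bound in \eqref{a-estimate}, and for \eqref{u-thm} you should use the change of measure $d\mathbb{P}/d\mathbb{Q}=F(Y_T,T)/F(y,t)$ with Proposition 3(i), because your uniform-integrability sketch leaves the $u_y$ term of the local martingale uncontrolled.
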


\begin{proof}
We only establish the claims for $\mathcal{X}^\ast$ and $\alpha^\ast$,  since (\ref{u-thm}) follows easily.
\vs

The first order conditions in (\ref{HJB-single}) yield \eqref{a-R-thm}, 
while  (\ref{u-w-F})\ gives 
\[
-\frac{bu_{x} +u_{xy}}{u_{xx} }=-\frac{w_{xy}}{w_{xx} }, 
\]%
and (\ref{pi-equals-alpha})\ follows, while  (\ref{a-estimates}) comes 
directly from (\ref{a-estimate}).
\vs

Next, we note that (\ref{a-feedback-process}) and (\ref{x-feedback}) will
follow once we show that the candidate control process is admissible and
that   (\ref{x-feedback}) has a unique strong solution. 
\vs
To this end, observe
that (\ref{u-aux}) gives 
\[
H_{z}u_{xx}(H,y,t)=-u_{x}(H,y,t), 
\]%
and%
\[
H_{y}u_{xx}(H,y,t)+u_{xy}(H,y,t)=k_{y}\left( y,t\right) u_{x}(H,y,t), 
\]%
and, in turn, \eqref{c-k.1} and \eqref{a-R-thm} imply%
\begin{equation}
\alpha ^{\ast }(H,y,t)=c(y,t)H_{z}(z,y,t)+H_{y}(z,y,t).
\label{a-feedback-reduced}
\end{equation}%
Using (\ref{a-feedback-reduced}), \eqref{L-intro} and \eqref{Y-reduced}, we write (\ref{x-feedback}) as%
\be\label{optimal sde}
\begin{split}
&d\mathcal{X}_{s}^{\ast } =H_{z}\left( H^{(-1)}(\mathcal{X}_{s}^{\ast },Y_{s},s),Y_{s},s\right) \left(
b(Y_{s},s)c(Y_{s},s)ds+c(Y_{s},s)dW_{s}\right)\\ 
&\qquad +H_{y}\left( H^{(-1)}(\mathcal{X}_{s}^{\ast },Y_{s},s),Y_{s},s\right) \left(
b(Y_{s},s)ds+dW_{s}\right)\\
&=H_z(H^{(-1)}({\mathcal{X}}^\ast_s, \mathcal{Y}_s,s),\mathcal Y_s,s) \dd L_{t,s} 
+H_y(H^{(-1)}({\mathcal{X}}^\ast_s, \mathcal{Y}_s,s),\mathcal Y_s,s)\dd \mathcal{Y}_s.
\end{split}
\end{equation}%
To identify the unique, for each fixed $\left( y,t\right) $ in $%
\mathbb{R\times }\left[ 0,T\right] ,$ solution to (\ref{optimal sde}),  we introduce the process $\mathcal{\hat{X}}$ 
given by 
\[
\mathcal{\hat{X}}_{s}=H(H^{(-1)}(x,y,t)+L_{t,s},Y_{s},s) \ \text{ for } \ s\in [t,T].
\]%
Using \eqref{H-intro} we find
%
\bee
\begin{split}
&d\mathcal{\hat{X}}_{s}=d\left( H(H^{(-1)}(x,y,t)+L_{t,s},Y_{s},s)\right) \\
&=\left( H_{z}\left( H^{(-1)}(\mathcal{\hat{X}}_{s},Y_{s},s),Y_{s},s\right)
\right) dL_{t,s}  +H_{y}\left( H^{(-1)}(\mathcal{\hat{X}}_{s},Y_{s},s),Y_{s},s%
\right) dY_{s}\\ 
&+ \left(H_{t}(H^{(-1)}(x,y,t)+L_{t,s},Y_{s},s) 
+\frac{1}{2}%
c^{2}(Y_{s},s)H_{zz}(H^{(-1)}(x,y,t)+L_{t,s},Y_{s},s)\right) \dd s\\
&+\left(c(Y_{s},s)H_{zy}(H^{(-1)}(x,y,t)+L_{t,s},Y_{s},s) 
+\frac{1}{2}
H_{yy}(H^{(-1)}(x,y,t)+L_{t,s},Y_{s},s) \right)\dd s\\
&= H_{z}\left( H^{(-1)}(\mathcal{\hat{X}}_{s},Y_{s},s),Y_{s},s\right)
\dd L_{t,s} +H_{y}\left( H^{(-1)}(\mathcal{\hat{X}}_{s},Y_{s},s),Y_{s},s%
\right) dY_{s},
\end{split}
\eee
and, thus,
\bee
d \tilde{\mathcal{X}}^\ast_s=d  \hat{\mathcal{X}}^\ast_s \ \text{in} \ (t,T] \ \ \text{and} \ \ \tilde{\mathcal{X}}^\ast_t=\hat{\mathcal{X}}^\ast_t=x. 
\eee

The first claim of the theorem is a direct consequence of (\ref{x-thm}) evaluated at $T$ and (\ref%
{u-DFN}), while the last claim  follows from (\ref{x-thm}), (\ref{a-feedback-process}) and (\ref%
{a-feedback-reduced}).
\vs
It remains to show that the It\^{o}'s integrals appearing in the expansions above are well defined, and that
\eqref{optimal sde} has a unique solution for each $(y,t)\in \R\times [0,T]$. 
\vs
These follow from \eqref{pi-equals-alpha} and arguments similar, albeit more tedious, to the ones used in the proof of 
Proposition~2.

\end{proof}
\begin{remark}
\bigskip We note that   (\ref{w-h-ez}) and (\ref%
{h-u-inverse}),  rewritten below as 
\begin{equation}
w_{x}(h(z,y,t), y,t)=e^{-z}\text{ \ and \ }u_{x}(H(z,y,t)y,t)=e^{-z+k(y,t)}%
\label{transformations}
\end{equation}%
for problems (\ref{u-tilda}) and (\ref{u-DFN}), relate   the 
marginals $w_{x}$ and $u_{x}$ to their corresponding linear equations (\ref%
{heat-h}) and (\ref{H-intro}). In the existing literature of
portfolio choice in complete markets, linearization has been carried out
directly for the inverse marginal $\left( u_{x}\right) ^{\left( -1\right) }$
leading  to a single linear pde. Herein, however, we use a different
linearization approach, by seeking a pair of auxiliary functions $(H,k)$
solving respective linear equations. The pair $\left( h,0\right) $ is easily
identified as the analogue of $(H,k)$ when $b(y,t)\equiv 0$ and the modified utility $J(x)F(y,T)$. Of course, the
two approaches are equivalent but working with transformations (\ref%
{transformations}) allow us to  produce regularity results and produce
closed-form solutions for general utilities. 
\end{remark}

We conclude with the derivation of an \text{autonomous} equation satisfied by the
optimal feedback controls for problems (\ref{u-tilda}) and (\ref{u-DFN}). 
The equation generalizes  the  fast-diffusion equation that the risk tolerance
function in the log-normal case satisfies; see, among others, \cite{Black}, \cite{Huang-Z.}, \cite{kallblad-Z} and \cite{Monin-Zariphopoulou}.

\begin{proposition}
Let $r$ be the risk tolerance (\ref{risk}). The optimal feedback control functions $\tilde{a}^{\ast }$ and $%
\alpha ^{\ast }$  for problems (\ref{u-tilda}) and (\ref{u-DFN})\
satisfy the terminal value problem%
\begin{equation}
\begin{cases}
R_{t}+\frac{1}{2}R^{2}R_{xx}+RR_{xy}+\frac{1}{2}R_{yy}=0\  \text{\  \ }in\text{
\ }\mathbb{R\times R\times }\left[ 0,T\right),\\[1.5mm]  \label{R-equation}
R(x,y,T)=b(y,T)r(x).
\end{cases}
\end{equation}%
\end{proposition}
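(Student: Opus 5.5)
The plan is to work with the auxiliary problem only. By \eqref{pi-equals-alpha} in Theorem~\ref{ord200} we have $\alpha^{\ast}=\tilde{\alpha}^{\ast}$, so it suffices to show that $R:=\tilde{\alpha}^{\ast}$ solves \eqref{R-equation}. By \eqref{a-optimal-tilda},
\[
R(x,y,t)=h_{y}\big(h^{(-1)}(x,y,t),y,t\big),
\]
with $h$ as in Proposition~2. Setting $x=h(z,y,t)$, this is equivalent to the identity
\begin{equation*}
R\big(h(z,y,t),y,t\big)=g(z,y,t), \qquad \text{where } g:=h_{y}.
\end{equation*}
Since $h$ is smooth and $h_{z}>0$ by \eqref{heat-1}, the implicit function theorem gives that $h^{(-1)}$, and hence $R$, is smooth enough, i.e.\ in $\mathcal{C}^{2,2,1}$. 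Differentiating \eqref{heat-h} in $y$ shows that $g$ also solves $g_{t}+\frac{1}{2}g_{yy}=0$.

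Next I differentiate the identity $R(h,y,t)=g$ in $t$ and in $y$ (twice), with $R$ and its derivatives evaluated at $(h(z,y,t),y,t)$. This gives
\begin{equation*}
g_{t}=R_{x}h_{t}+R_{t},\qquad g_{y}=R_{x}h_{y}+R_{y},
\end{equation*}
\begin{equation*}
g_{yy}=R_{xx}h_{y}^{2}+2R_{xy}h_{y}+R_{yy}+R_{x}h_{yy}.
\end{equation*}
Adding the first and half the third, and using $g_{t}+\frac{1}{2}g_{yy}=0$ together with $h_{t}+\frac{1}{2}h_{yy}=0$, the $R_{x}$ terms cancel and we obtain
\begin{equation*}
R_{t}+\tfrac{1}{2}h_{y}^{2}R_{xx}+h_{y}R_{xy}+\tfrac{1}{2}R_{yy}=0 .
\end{equation*}
Now substitute $h_{y}(z,y,t)=R(h(z,y,t),y,t)$. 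This yields the equation in \eqref{R-equation} at the point $(h(z,y,t),y,t)$. By \eqref{heat-1}, $z\mapsto h(z,y,t)$ is onto $\mathbb{R}$, so the equation holds on all of $\mathbb{R}\times\mathbb{R}\times[0,T)$.

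For the terminal condition, take $x=h(z,y,T)=I\big(e^{-z}/F(y,T)\big)$. The computation at $t=T$ in the proof of Proposition~2(ii), namely \eqref{h-y-z} combined with \eqref{I-identities}, gives
\[
h_{y}(z,y,T)=b(y,T)\,r\big(I(e^{-z}/F(y,T))\big)=b(y,T)\,r(x).
\]
Hence $R(x,y,T)=b(y,T)r(x)$, as required.

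The computations themselves are routine. The step needing the most care is justifying the regularity of $R$ up to $t=T$ and the legitimacy of differentiating $h_{y}$ through the heat equation. For this I would rely on the smoothness of $h$ from Proposition~2(i) and on the growth bounds \eqref{heat-2} and \eqref{heat-3}, which control $h_{y}$ and $h_{yz}$ uniformly.
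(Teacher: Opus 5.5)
Your argument is correct, but it runs the chain rule in the opposite direction from the paper.

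\textbf{The paper's route.} It differentiates $\tilde{\alpha}^{\ast}(x,y,t)=h_{y}(p(x,y,t),y,t)$, with $p=h^{(-1)}$, in the original variables $(x,y,t)$. This needs:
\begin{itemize}
\item the implicit-derivative formulas \eqref{m-1}--\eqref{m-4} for $p$;
\item the identity $h_{y}p_{x}+p_{y}=0$, to remove the $h_{zzy}$ and $h_{zyy}$ terms;
\item a separate verification that $p$ itself satisfies $p_{t}+\frac{1}{2}h_{y}^{2}p_{xx}+h_{y}p_{xy}+\frac{1}{2}p_{yy}=0$.
\end{itemize}

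\textbf{Your route.} You differentiate $R(h(z,y,t),y,t)=h_{y}(z,y,t)$ along fixed $z$. The only cancellation needed is $R_{x}\big(h_{t}+\frac{1}{2}h_{yy}\big)=0$, and no second derivatives of $h^{(-1)}$ ever appear.

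\textbf{What each buys.} Your version is shorter and exposes the structure. If $\phi(z,y,t)$ solves the heat equation in $(y,t)$ for each fixed $z$, then $\phi(p(x,y,t),y,t)$ solves the linear equation with generator $\partial_{t}+\frac{1}{2}h_{y}^{2}\partial_{xx}+h_{y}\partial_{xy}+\frac{1}{2}\partial_{yy}$, with $h_{y}$ evaluated at $(p,y,t)$. The paper's identity for $p$ is the trivial case $\phi=z$, and the $R$-equation is the case $\phi=h_{y}$. Probabilistically, $h_{y}(z,\mathcal{Y}_{s},s)$ is a $\mathbb{Q}$-(local) martingale along the optimal flow \eqref{optimal-processes-tilda}. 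The paper's version, in exchange, gives explicit formulas for the derivatives of $\tilde{\alpha}^{\ast}$ in terms of $h$. These formulas simultaneously supply the regularity you get from the implicit function theorem.

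You also check explicitly the terminal condition and the surjectivity of $z\mapsto h(z,y,t)$, both of which the paper leaves implicit. The growth bounds \eqref{heat-2}--\eqref{heat-3} you invoke at the end are not needed. The argument is pointwise and uses only the smoothness of $h$ and $h_{z}>0$.
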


\begin{proof}
Given   (\ref{pi-equals-alpha}), it suffices to establish (\ref%
{R-equation}) for $\tilde{a}^{\ast }$. 
\vs
Using \eqref{a-optimal-tilda} and differentiating 
$p=h^{\left( -1\right) }$ we find   
\[
\tilde{\alpha}_{t}^{\ast }=p_{t}h_{zy}+h_{yt},\text{ \  \ }\tilde{\alpha}%
_{x}^{\ast }=p_{x}h_{zy}, 
\]%
\[
\tilde{\alpha}_{xx}^{\ast }=p_{xx}h_{zy}+\left( p_{x}\right) ^{2}h_{yzz},%
\text{ \ }\tilde{\alpha}_{y}^{\ast }=p_{y}h_{zy}+h_{yy}, 
\]%
\[
\tilde{\alpha}_{xy}^{\ast }=p_{xy}h_{zy}+p_{x}p_{y}h_{zzy}+p_{x}h_{zyy}, 
\]%
and 
\[
\tilde{\alpha}_{yy}^{\ast }=p_{yy}h_{zy}+\left( p_{y}\right)
^{2}h_{zzy}+2p_{y}h_{zyy}+h_{yyy}, 
\]%
where functions $\tilde{\alpha}_{x}^{\ast }, p$ and their derivatives are
evaluated at $\left( x,y,t\right) ,$ and $h_{y}$ and its derivatives at $%
\left( p(x,y,t\right) ,y,t)$. 
\vs
Grouping terms yields%
\bee
\begin{split}
& \tilde{\alpha}_{t}^{\ast }+\frac{1}{2}\left( \tilde{\alpha}^{\ast }\right)
^{2}\tilde{\alpha}_{xx}^{\ast }+\tilde{\alpha}^{\ast }\tilde{\alpha}%
_{xy}^{\ast }+\frac{1}{2}\tilde{\alpha}_{yy}^{\ast } \\
&=\big( p_{t}+\frac{1}{2}\left( h_{y}\right) ^{2}p_{xx}+h_{y}p_{xy}+\frac{1}{%
2}p_{yy}\big) h_{zy} 
+\frac{1}{2}\left( h_{y}p_{x}+p_{y}\right) ^{2}h_{zzy}\\
&=\big( p_{t}+\frac{1}{2}\left( h_{y}\right) ^{2}p_{xx} +h_{y}p_{xy}+\frac{1}{%
2}p_{yy}\big) h_{zy}, 
\end{split}
\eee
where we used that $h_{y}$ satisfies  \eqref{heat-h}
  and that $%
h_{y}p_{x}+p_{y}=0$ (cf. (\ref{m-1}) and (\ref{m-2})). 
\vs
Using (\ref{m-2}), \eqref{m-3} and (%
\ref{m-4}), we obtain%
\[
p_{t}+\frac{1}{2}\left( h_{y}\right) ^{2}p_{xx}+h_{y}p_{xy}+\frac{1}{2}%
p_{yy}=0, 
\]%
and we conclude.
\end{proof}

\subsection{Examples}

We discuss three representative examples of utilities. We start with the
well studied exponential one, which we review for completeness and also to
provide the formulae to be used in the upcoming MFG\ examples. Then, we turn 
to  SAHARA\ utilities and, finally, to  their
CMIM\ extension. 

\subsection{Exponential utility}


To ease the presentation, since the arguments are similar, we only consider the utiity  $J:\R\to \R$ given by 
\begin{equation}
J(x)=-e^{-x}.  \label{J-expo}
\end{equation}%
\vs
Since, for $z>0$,  $I(z)=-\ln z$, (\ref{takis2}) yields $%
H(z,y,T)=z$. Then, (\ref{H-intro}) implies that $H(z,y,t)=z$ for each $(z,y,t)\in \mathbb{R\times
R\times }\left[ 0,T\right] $, and, in turn, (\ref{x-thm}) and (\ref%
{a-process-thm})\ give
\begin{equation}
\mathcal{X}_{s}^{\ast }=x+L_{t,s}\text{ \  \ and \  \ }\alpha _{s}^{\ast
}=c(Y_{s},s),  \label{X-a-optimal-expo-Merton}
\end{equation}%
with $L$ as in \eqref{L-intro} and $c$ solving (\ref{c-heat-eqn}).
\vs
Furthermore, for $(x,y,t) \in  \mathbb{R\times R\times }\left[ 0,T\right]$ and $k$ as in (\ref{c-k.2}),
\begin{equation*}
u(x,y,t)=\mathbb{E}\left[ -\exp \left( -\left( x+L_{t,T}\right) \right) %
\right] =-e^{-x+k(y,t)}.   
\end{equation*}%

\subsection{Utilities with asymptotically linear risk tolerance}

We consider utility functions $J$ characterized by their risk tolerance
function of the parametric form (\ref{double}).
Such functions were  introduced
in \cite{Musiela-Za-monotone} and the case of known drift,  $b(y,t)\equiv b,$ 
 was studied in \cite{zariphopoulou-zhou} and \cite{Chen-SAHARA}, where 
 the acronym SAHARA\ (symmetric asymptotic hyperbolic risk
aversion)\ utilities was introduced. 
\vs
To ease the presentation, we work only with the special case that the risk tolerance is 
\[
r(x)=\sqrt{x^{2}+1}, 
\]
%
which, from \eqref{takis2} and (\ref{risk}), corresponds, for $(z,y) \in \R\times \R$,  to 
\begin{equation}\label{H-sinh}
H(z,y,T)=\sinh z.  
\end{equation}%
It follows from  \eqref{H-intro}
that, for $(z,y,t) \in \R\times \R \times [0,T]$,
\begin{equation}\label{H-SAHARA}
H(y,t)=\frac{1}{2}e^{z}l_{1}(y,t)-\frac{1}{2}e^{-z}l_{2}(y,t), 
\end{equation}%
where   $l_{1}, l_2:\R\times [0,T]\to \R$ are smooth solutions to 
\[
l_{1,t}+\frac{1}{2}l_{1,yy}+cl_{1,y}+\frac{1}{2}c^{2}l_{1}=0\text{
 in }\mathbb{R\times }\left[ 0,T\right) \ \text{and} \ l_{1}(y,T)=1, 
\]%
and 
\[
l_{2,t}+\frac{1}{2}l_{2,yy}-cl_{2,y}+\frac{1}{2}c^{2}l_{2}=0\text{
 in  }\mathbb{R\times }\left[ 0,T\right)  \ \text{and} \ l_{2}(y,T)=1.
\]%
Since, for $i=1,2,$ $l_{i}>0$ in $%
\mathbb{R\times }\left[ 0,T\right]$, for $(z,y,t)\in \R\times \R \times [0,T]$, we have 
\begin{equation}
H_{z}(z,y,t)=\frac{1}{2}e^{z}l_{1}(y,t)+\frac{1}{2}e^{-z}l_{2}(y,t)>0,  \label{H'-SAHARA}
\end{equation}%
and, thus, $H^{(-1)}$, which is inverse in the $z-$argument,  exists and is given, for $(x,y,t)\in \mathbb{R\times R\times }\left[ 0,T\right]$,   by 
\begin{equation}
H^{\left( -1\right) }(x,y,t)=\ln \frac{2x+\sqrt{4x^{2}+2l_{2}(y,t)}}{%
2l_{1}(y,t)}. 
\label{H-inverse-SAHARA}
\end{equation}%
Following (\ref{x-thm}), (\ref{a-process-thm}), (\ref{H-SAHARA}), (%
\ref{H'-SAHARA}) and (\ref{H-inverse-SAHARA}), the optimal processes $\mathcal{X}^{\ast }$ and $\alpha ^{\ast }$ are given by 

\[
\mathcal{X}_{s}^{\ast }=\frac{1}{2}e^{H^{\left( -1\right) }\left(
x,y,t\right) +L_{t,s}}l_{1}(Y_{s},s)-\frac{1}{2}e^{-\left( H^{\left(
-1\right) }\left( x,y,t\right) +L_{t,s}\right) }l_{2}(Y_{s},s), 
\]%
and 
\[
\mathcal{\alpha }_{s}^{\ast }=\frac{1}{2}e^{H^{\left( -1\right) }\left(
x,y,t\right) +L_{t,s}}l_{1}(Y_{s},s)+\frac{1}{2}e^{-\left( H^{\left(
-1\right) }\left( x,y,t\right) +L_{t,s}\right) }l_{2}(Y_{s},s). 
\]

\subsection{Utilities with completely monotonic inverse marginals}

A large class of utilities, which substantially extend the SAHARA ones, 
are utilities with completely monotonic (CM) inverse marginals (CMIM).
\vs
Classical results for the representation of CM\ functions yield that the
inverse marginals of CMIM utilities  are  of the form%
\begin{equation}
I(x)=\int x^{-\rho }d\mu \left( \rho \right) ,
\label{CMIM}
\end{equation}%
for a suitable measure so that $I^{\prime }(x)<0,$ $\lim_{x\rightarrow
0}I(x)=\infty $ and $\lim_{x\rightarrow \infty }I(x)=0$.
\vs
CMIM\ utilities were introduced in \cite{Musiela-Za-monotone} in the context
of time-monotone forward utilities where a detailed study of the measure $%
\mu $ was carried out; see, also, \cite{Geng-Zariphopoulou} for related
turnpike-type problems and \cite{Mostovi-et-al} for their role in abstract
semi-martingale market models.
\vs
From (\ref{CMIM}) and (\ref{u-aux}), we deduce that CMIM\ utilities can be
equivalently described by the related function $H(z,y,T)$, which takes, for $z\in \R$,  the
form 
\begin{equation}
H(z,y,T)=\int e^{z\rho }\dd \mu \left( \rho \right).  \label{H-CMIM-terminal}
\end{equation}%
For example, (\ref{H-sinh}) corresponds to measure $\mu \left( \rho
\right) =\frac{1}{2}\left( \delta _{1}-\delta _{-1}\right) $.
\vs
With terminal condition (\ref{H-CMIM-terminal}), the solution to \eqref{H-intro}
is represented as%
\begin{equation}
H(z,y,t)=\int e^{z\rho }m_{\rho }(y,t)d\mu \left( \rho \right) ,
\label{H-CMIM}
\end{equation}%
where for each $\rho \in  \text{supp} \left( \mu \right)$,  $l_{\rho }:\R\times[0,T]\to \R^+$ 
solves the terminal value problem
\begin{equation*}
l_{\rho ,t}+\frac{1}{2}l_{\rho ,yy}+(\rho c) l_{\rho ,y}+\frac{1}{2}(\rho c)^{2}l_{\rho }=0\text{ in  }\mathbb{R\times }\left[ 0,T\right) \ \text{and} \  l_{\rho }(y,T)=1.
\end{equation*}%

The optimal processes $\mathcal{X}%
^{\ast }$ and $\alpha ^{\ast }$ are given, for $0\leq t\leq s\leq T$, by 
\begin{equation*}
\mathcal{X}_{s}^{\ast }=\int e^{\rho \left( H^{\left( -1\right)
}(x,y,t)+L_{s,t}\right) }l_{\rho }(Y_{s},s)d\mu \left( \rho \right) ,  
\end{equation*}%
and 
\begin{equation*}
\mathcal{\alpha }_{s}^{\ast }=\int \rho e^{\rho \left( H^{\left( -1\right)
}(x,y,t)+L_{s,t}\right) }l_{\rho }(Y_{s},s)d\mu \left( \rho \right).  
\end{equation*}%
Naturally, one needs to specify proper conditions on both the measure $\mu $
and the market dynamics so that the above quantities are well-defined. We
leave these questions for future research.

\begin{remark}
The form of the auxiliary function $H$ in (\ref{H-sinh}) and (\ref%
{H-CMIM-terminal}) allows us to guess solutions of the form (\ref{H-SAHARA}) and
(\ref{H-CMIM}), respectively. These candidate solutions are smooth as they
solve individual one-dimensional linear pdes. This together with the
uniqueness of viscosity solutions for equation (\ref{H-intro})\ yield the
result. However, outside the class of CMIM\ utilities, smooth solutions are
not a priori guaranteed as the problem is two-dimensional and not uniformly
elliptic.
\end{remark}

\section{The N-player and the mean field game in the reduced complete
information market}

We consider a game of $N-$players in the market as in section 2, consisting
of a stock with price $S$ solving (\ref{stock-reduced}) with the local
factor process $Y$ satisfying (\ref{Y-reduced}), and a bond offering zero
interest rate. The latter is taken to be the numeraire and all state and
controlled processes below are expressed in discounted by it units.
\vs
The wealth process $X^{N,i}$ of the $i^{th}-$player, for $i=1,...,N,$
evolves according to the sde%
\begin{equation}
dX_{s}^{N,i}=b(Y_{s},s)\pi _{s}^{N,i}ds+\pi _{s}^{N,i}dW_{s} \ \text{in \ } (t,T] \ \text{and \ }  
X_{t}^{N,i}=x_{i}\in \R,
\label{wealth-single}
\end{equation}%
with the drift coefficient $b$ as in (\ref{F-dfn}) and the Brownian motion 
$W$ as in (\ref{W-innovations}). 
\vs
The control policies $\pi ^{N,i}$, %
which are self-financing and satisfy $\pi ^{N,i}\in \mathcal{A}$ as in
(\ref{admissible-set}), model the (discounted)\ amount invested in the
stock.
\vs
To ease the presentation, in what follows we will be occasionally using the abbreviated notation $%
z^{1:N}=(z_{1},...,z_{N})$. Moreover, we will not be repeating, unless important, the fact that $i=1,
\ldots, N$. 
\vs
The value function of the  $i^{th}-$player, for $(x_{1},...,x_{N},y,t) \in \mathbb{%
R^N\times R\times }\left[ 0,T\right]$, is 
\be\ \label{V-i}
\begin{split}
& u^{i}\left( x_{1},...,x_{N},y,t\right) \\
&=\sup_{\pi _{i}\in \mathcal{A}}\mathbb{E}\Big[ J\Big( X_{T}^{N,i},%
\frac{1}{N-1}\sum \limits_{j=1,j\neq i}^{N}\delta _{X_{T}^{N,j}}\Big)
 \vert X_{t}^{N,1:N}=x_{1:N},Y_{t}=y\Big]. 
\end{split}
\end{equation}%
The utility function denoted, by a slight abuse of notation, as $J$ is
common across players and depends on both their individual terminal wealth $%
X_{T}^{N,i}$ and the law of their peers' wealth at $T$. As mentioned above,
since all involved processes are unitless, there is no issue with unit
consistency in nonlinear payoffs as above, thus allowing for couplings that depend
on higher moments, quantiles, distances among returns, and others; see \cite{Musiela-Z-numeraire} for the role of unit consistency,
\vs
We recall that a control process $\left( \pi ^{N,1,\ast },...,\pi ^{N,N,\ast
}\right) $ is a \text{Nash equilibrium }of this game, if, for each $%
i=1,...,N,$ and all $\pi ^{N,i}\in \mathcal{A}$,%
\bee
\begin{split}
&\mathbb{E}\Big[ J\Big( X_{T}^{N,i},\frac{1}{N-1}\sum
\limits_{j=1,j\neq i}^{N}\delta _{X_{T}^{N,j,\ast }}\Big)  \vert %
 X_{t}^{N,1:j-1,\ast
}=x_{1:i-1},X_{t}^{N,i}=x_{i},\\
&\hskip1in X_{t}^{N,j+1:N,\ast }=x_{i+1:N},Y_{t}=y
\big]\\
&\leq \mathbb{E}\Big[  J\Big( X_{T}^{N,i,\ast },\frac{1}{N-1}\sum
\limits_{j=1,j\neq i}^{N}\delta _{X_{T}^{N,j,\ast }}\Big)  \vert
X_{t}^{N,1:N,\ast }=x_{1:N},Y_{t}=y\Big ],
\end{split}
\eee
where  ${X^{N,j,\ast }}$ denotes the solution to \eqref{wealth-single}
with control process $\pi^{N,j,\ast}$. 
\vs
Next, we (formally) assume that there exist Nash
equilibrium control processes $\left( \pi ^{N,i,\ast },...,\pi ^{N,N,\ast
}\right) $ in the feedback form 
\[
\pi _{s}^{N,i,\ast }=\pi ^{N,i,\ast }\left( X_{s}^{N,1,\ast
},...,X_{s}^{N,N,\ast },Y_{s},s\right) ,\text{ } 
\]%
for some functions $\pi ^{N,i,\ast }:\mathbb{R}^{N}\mathbb{\times R\times }%
\left[ 0,T\right] \rightarrow \mathbb{R}$ and processes $X^{N,i,\ast }$
solving (\ref{wealth-single}) with controls $\pi ^{N,i,\ast }$.
Then, the value functions $\big(u^{N,1},\ldots, u^{N,N}\big) $ are expected to satisfy 
 the fully coupled system of terminal value Hamilton-Jacobi-Bellman (HJB)\ equations 
\begin{equation}\label{HJB-finite}
\begin{split}
&u_{t}^{N,i}+\max_{\pi ^{N,i}}\Big(\frac{1}{2}\left( \pi ^{N,i}\right)
^{2}u_{x_{i}x_{i}}^{N,i}\\
&\qquad +\pi ^{N,i}\Big( b
u_{x_{i}}^{N,i}+\sum \limits_{j=1,\text{ }j\neq i}^{N}\pi ^{N,j,\ast
}u_{x_{i}x_{j}}^{N,i} \big)+ \pi ^{N,i} u_{x_{i}y}^{N,i} \Big)\\
&\qquad+\frac{1}{2}\sum \limits_{j=1,\text{ }j\neq i}^{N}\sum \limits_{k=1,k\neq
i}^{N}\pi ^{N,j,\ast }\pi ^{N,k,\ast }u_{x_{j}x_{k}}^{N,i}+\sum
\limits_{j=1,\text{ }j\neq i}^{N}\pi ^{N,j,\ast }u_{x_{j}y}^{N,i}\\
&\qquad+b \sum \limits_{j=1,\text{ }j\neq i}^{N}\pi ^{N,j,\ast
}u_{x_{j}}^{N,i} 
+\frac{1}{2}u_{yy}^{N,i} +b  u_{y}^{N,i}=0\text{ \  \ in \ }%
\mathbb{R}^{N}\times \mathbb{R}\times \left[ 0,T\right],\\
&u^{N,i}\Big( x_{1},...,x_{N},y,T\Big) =J\Big( x_{i},\dfrac{1}{N-1}\sum
\limits_{j=1,j\neq i}^{N}\delta _{x_{j}}\Big).
\end{split}
\ee
If, in addition, the maximum in (\ref{HJB-finite}) is well-defined in each
respective HJB\ equation, we deduce - still formally - that the optimal
feedback control functions $\left( \pi ^{N,1,\ast },...,\pi ^{N,N,\ast
}\right) $ must satisfy the linear $N\times N$ system 
\begin{equation}
\pi ^{N,i,\ast }u_{x_{i}x_{i}}^{N,i}+\sum \limits_{j=1,\text{ }j\neq
i}^{N}\pi ^{N,j,\ast }u_{x_{i}x_{j}}^{N,i}=-b
u_{x_{i}}^{N,i}-u_{x_{i}y}^{N,i},\text{\  \ }i=1,...,N.  \label{optN}
\end{equation}

This system, however, is not tractable due to the interlinked dependence of
the feedback controls $\pi ^{N,i,\ast \prime }s$ and the coefficients $%
u_{x_{i}}^{N,i},u_{x_{i}x_{i}}^{N,i},u_{x_{i}x_{j}}^{N,i}$ and $%
u_{x_{i}y}^{N,i},$ $i,j=1,...,N$. We actually note that it is not even clear
if the individual value functions $u^{N,i\prime }s$ are smooth enough for
the latter derivatives to exist. Similar difficulties were, also,
encountered in the simpler system derived in the Black and Scholes market
model ($b(y,t)\equiv b$) in \cite{Souganidis-Z-MFG}.
\vs
Motivated by the intractability of the finite player game, we consider next
a related mean field game.

\subsection{The mean field game and its master system}

The representative agent's state $X$ solves, for 
$\pi \in \mathcal{A},$ the continuum analogue of sde \eqref{wealth-single}, that is,
\begin{equation}
dX_{s}=b(Y_{s},s)\pi _{s}ds+\pi _{s}dW_{s} \ \text{in} \ (t,T] \text{ \ and  \ }X_{t}=x,  \label{X-representative}
\end{equation}%
with $b$ as in \eqref{F-dfn}, and $W$ and $Y$ as in (\ref{W-innovations})
and (\ref{Y-reduced}), respectively.
\vs
We emphasize that the derivation below is formal since we do not have
estimates that would allow us to give a rigorous proof. To this end, for
large number of players, we assume that the optimal feedback controls $\pi
^{N,i,\ast }\left( x_{1},...,x_{N},y,t\right) $ and the value functions $%
u^{N,i}\left( x_{1},...,x_{N},y,t\right) $ satisfy, for each $i=1,...,N,$
\begin{equation}\label{paris1}
\pi ^{N,i,\ast }\left( x_{1},...,x_{N},y,t\right) \text{ }\simeq \text{ }\pi
^{N,\ast }\left( x_{i},y,m^{N,i},t\right) , 
\end{equation}
and 
\begin{equation}\label{paris2}
u^{N,i}\left( x_{1},...,x_{N},y,t\right) \simeq u^{N}\left(
x_{i},y,m^{N,i},t\right) , 
\end{equation}%
where $m^{N,i}$ is the empirical measure of the rest
of the players, 
\begin{equation}\label{paris3}
m^{N,i}=\dfrac{1}{N-1}\sum \limits_{j=1,j\neq i}^{N}\delta _{x_{j}}. 
\end{equation}%
We assume that, for each $i,$ as $N\rightarrow \infty ,$ $%
m^{N,i}$ converge weakly to a common measure $m\in \mathcal{P}_{2}$  and, furthermore, that there exist $\pi ^{\ast },U:\mathbb{R\times R\times \mathcal{P}}_{2}\times %
\left[ 0,T\right] \mathbb{\rightarrow R}$ such that, as $N\to \infty$, 
\begin{equation}\label{conjecture}
\begin{split}
&\pi ^{N,i,\ast }\left( x_{i},y,m^{N,i},t\right) \to \text{\ }\pi
^{\ast }\left( x,y,m,t\right),\\ 
&u^{N,i}\left(
x_{i},y,m^{N,i},t\right) \to \text{\ }U\left( x,y,m,t\right). 
\end{split}
\end{equation}%

Then, following similar approximations for each term in \eqref{HJB-finite}, we find that the formal limit, as $N\rightarrow \infty ,$ of  (\ref%
{HJB-finite})\ takes the form \eqref{master-intro} and the candidate mean field equilibrium optimal control $\pi
^{\ast }\left( x,y,m,t\right) $ is expected to satisfy \eqref{pi-intro}.

\vs
In summary, we expect the pair\text{\ }$\left( U\left( x,y,m,t\right) ,\pi
^{\ast }\left( x,y,m,t\right) \right) $ to satisfy the system of equations \eqref{master-intro} and \eqref{pi-intro}. 
\vs 
For the convenience of the reader we present a formal derivation of the optimality condition \eqref{pi-intro}. The master equation follows similarly.
\vs
 It follows from
\eqref{paris2} and \eqref{paris3} that, for $i\in\{1,\ldots, N\}$ and $j \in \{1,\dots, N\}\setminus \{i\}$, 
\[u^{N,i}_{x_ix_j}\simeq \frac{1}{N-1} u^N_{xm}(x_i,y,m^{N,i},x_j,t).\]
Inserting the above in \eqref{optN} and using \eqref{paris1} we find
\begin{equation*}
\begin{split}
&\pi^{N,\ast} u^N_{xx}(x_i,y,m^{N,i},t) \\
&+\frac{1}{N-1}\sum^N_{j=1,j\neq i} \pi^{N,\ast}(x_j,y,m^{N,i},t)u^N_{xm}(x_i,y,m^{N,i},x_j,t)\\
&=-bu^{N}_x(x_i,y, m^{N,i},t)-u^N_{xy}(x_i,y,m^{N,i},t).
\end{split}
\end{equation*}
After sending $N\to \infty$, the above expression yields formally  \eqref{pi-intro}.

\subsubsection{A simplified master equation}

The master equation \eqref{master-intro} can be simplified to a
similar equation corresponding to the case $b(y,t)\equiv 0$ and a modified
terminal utility. Indeed, in analogy to (\ref{u-w-F}), we set 
\begin{equation*}
W(x,y,m,t)=U\left( x,y,m,t\right) F(y,t), 
\end{equation*}%
with $F$ solving (\ref{F-equation}). Then, direct calculations in \eqref{master-intro} 
yield the master equation 
\be\label{master-reduced}
\begin{split}
&W_{t} +\frac{1}{2}\pi^{\ast 2} W_{xx} + \pi^{\ast }W_{xy} 
+\pi^{\ast } \mathcal{L}_1 W + \mathcal{L}_2 W + +\mathcal{L}_4 W\\ 
&\hskip.75in +\frac{1}{2} W_{yy}=0 \ \text{in} \ \R\times \R\times \mathcal{P}_2\times [0,T),\\[1.2mm]
&W(x,y,m,T)=J(x,m)F(y,T),
\end{split}
\ee
with $\mathcal{L}_1$,  $\mathcal{L}_2$ and $\mathcal{L}_4$ defined in  \eqref{takis1000.0}, \eqref{takis1000} and \eqref{takis1000.000} respectively.
\vs

Herein, we choose to work with the original version (\ref{master-intro}) for
two reasons. Firstly, \eqref{master-intro}  provides a natural object to use in
order to recover its analogue when $b(y,t)\equiv b>0,$ by sending $\nu $%
\textbf{\ }in (\ref{ass1}) to the Dirac function $\delta _{b}$. Secondly, 
it is expected that the results for non-zero drift can be used
to study MFG\ in complete markets with multi-dimensional local factors
(local volatility, predictable means, and others) and general utilities,
which arise in many applications but have not been studied so far.

\section{A separable class of MFG with partial information}

We study the master system \eqref{master-intro} and (\ref{pi-intro}) for
payoffs of the separable form, that is,%
\begin{equation}
J(x,m)=J\left( x-C\left( m\right) \right),
\label{payoff separable}
\end{equation}%
with $J$ being the utility of the representative agent and $C>0$ the
coupling function on the law of peers' wealth. We allow for general
dependence of $C$ on $m$, thus extending all works so far which considered only couplings
of form $C\left( m\right) =C(\bar{m})$. We examine the
latter special case in subsection 5.2 and subsection 5.3. We note that the minus sign
above is immaterial and is only used to allow for comparisons with existing works
in portfolio choice with relative performance. Indeed, one may allow for
homophilous interactions, in which players benefit from actions of their
peers; see \cite{Hu-Z} for an example with exponential utilities.

\subsection{Solution of the master system}

We construct a solution pair $U$ and $\pi ^{\ast }$ to system \eqref{master-intro} and (\ref{pi-intro}). 
\vs
Arguing formally, we  show that the value of the game can be represented as 
the value function $u$ of the single agent problem (\ref{u-DFN})\ (no
competition) with its argument translated by a function $f$ that solves a
non-local quasilinear  infinite-dimensional pde whose coefficients depend on the mean
field equilibrium feedback controls.  
We provide rigorous arguments when additional, but still general enough,
modeling assumptions are introduced.

\vs

We recall that the value function $u$ in (\ref{u-intro}) satisfies the terminal HJB equation \eqref{HJB-intro} and \eqref{takis1.11}
and that the optimal feedback control $\alpha^\ast$ is given by \eqref{feedback-intro}. 
%
\vs
Assume next that there exists a smooth solution $f\in \R\times\mathcal{P}_2\times [0,T]\to \R$ to 
\be\label{f-MFG-eqn}
\begin{split}
&f_{t}+ \mathcal{L}_2 f  + \mathcal{L}_4 f 
+\frac{1}{2}f_{yy}=0\text{ \ in \ }\mathbb{R\times }\mathcal{P}_{2} {
\times }\left[ 0,T\right],\\[1.5mm] 
&f(y,m,T)=C(m), 
\end{split}
\ee
with $\mathcal{L}_2$ and $\mathcal{L}_4$ as in \eqref{takis1000} and \eqref{takis1000.000} respectively, 
where  $\pi ^{\ast }:\R\times\R\times \mathcal{P}_{2}\times [0,T] \to \R$ satisfies, for each $(x,y,m,t) \in \mathbb{R\times }\mathcal{P}_{2}\mathcal{\times }\left[ 0,T\right]$, 
\begin{equation} \label{pi-f}
\begin{split}
\pi ^{\ast }(x,y,m,t)&- \ds \int \pi ^{\ast }(z,y, m, t)f_{m}(x,y,z,t)dm(z) \\
&=f_{y}(x,y,m,t) +a^{\ast }(x - \left( y,m,t\right) ,y,t). 
\end{split}
\end{equation}%
\vs

The regularity of $f$  is  important for what follows. We note, however, that it is not known   whether \eqref{f-MFG-eqn} has a smooth solution.  Later we give an interpretation to this 
equation and provide a formula for the possible solution. 
\vs
Next, we provide the main result of this section, representing the value $U$ of the game in terms of  the value function of the
problem of no competition and function $f$ above. We defer the interpretation of the solution to subsection~5.1 where we connect $U$ to the value of the writer of claim $C(m^\ast_T)$ (at the optimum,) $u$ being the value function of the ``plain'' investor and $f$ being the dynamic price of  $C(m^\ast_T)$.

\begin{proposition}
Let $u$ and $a^{\ast }$ be as in \eqref{HJB-intro}, \eqref{takis1.11}  and \eqref{feedback-intro}, and assume that \eqref{f-MFG-eqn} admits a smooth solution with $\pi^\ast$ satisfying \eqref{pi-f}. 
%
Then, a solution to  \eqref{master-intro} is given, for $(x,y,m,t) \in  \mathbb{R\times }\mathcal{P}_{2}\mathcal{\times }\left[ 0,T\right]$,  by 
\begin{equation} \label{U-u-f}
U(x,y,m,t)=u(x-f(y,m,t),y,t), 
\end{equation}%
with  $\pi^\ast$ satisfying \eqref{pi-f}. 
\end{proposition}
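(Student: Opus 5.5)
The approach is direct substitution. We plug the ansatz $U(x,y,m,t)=u(x-f(y,m,t),y,t)$ into the master equation \eqref{master-intro} and reduce everything to the HJB equation \eqref{HJB-intro} for $u$ and the equation \eqref{f-MFG-eqn} for $f$. Throughout, we write $\xi=x-f(y,m,t)$, and $u$ and its derivatives are evaluated at $(\xi,y,t)$.

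\textbf{Step 1: derivatives of $U$.} By the chain rule,
\begin{equation*}
U_x=u_x,\quad U_{xx}=u_{xx},\quad U_t=u_t-f_tu_x,\quad U_y=u_y-f_yu_x,\quad U_{xy}=u_{xy}-f_yu_{xx},
\end{equation*}
\begin{equation*}
U_{yy}=u_{yy}-2f_yu_{xy}+f_y^2u_{xx}-f_{yy}u_x .
\end{equation*}
For the measure derivatives, $U_m(\cdot,z)=-f_m(\cdot,z)u_x$ and $U_{xm}=-f_mu_{xx}$. Furthermore,
\begin{equation*}
U_{mm}(\cdot,z_1,z_2)=-f_{mm}u_x+f_m(z_1)f_m(z_2)u_{xx},\quad U_{mz}=-f_{mz}u_x,\quad U_{ym}=-f_{ym}u_x+f_yf_mu_{xx}-f_mu_{xy}.
\end{equation*}
Plugging these into the operators gives
\begin{equation*}
\mathcal L_1U=-u_{xx}\mathcal L_3f,\quad \mathcal L_3U=-u_x\mathcal L_3f,\quad \mathcal L_2U=-u_x\mathcal L_2f+\tfrac12u_{xx}(\mathcal L_3f)^2,
\end{equation*}
\begin{equation*}
\mathcal L_4U=-u_x\mathcal L_4f+(f_yu_{xx}-u_{xy})\mathcal L_3f .
\end{equation*}

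\textbf{Step 2: collecting terms.} Set $\beta=\pi^\ast-\mathcal L_3f$. By \eqref{pi-f}, $\beta=f_y+a^\ast(\xi,y,t)$. Substituting Step 1 into \eqref{master-intro} and grouping, the terms multiplied by $u_x$ give $-(f_t+\mathcal L_2f+\mathcal L_4f+\tfrac12f_{yy})u_x$ together with the drift pieces $b\pi^\ast u_x-b\mathcal L_3f\,u_x-bf_yu_x=b(\beta-f_y)u_x$. The remaining terms should assemble into a perfect square form
\begin{equation*}
\tfrac12\beta^2u_{xx}+\beta u_{xy}-\beta f_yu_{xx}+\tfrac12f_y^2u_{xx}-f_yu_{xy}.
\end{equation*}
This expression, and in particular the cancellation of the cross terms $\pi^\ast\mathcal L_3f$ and $f_y\mathcal L_3f$, is what I will check carefully. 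It rewrites as $\tfrac12(\beta-f_y)^2u_{xx}+(\beta-f_y)u_{xy}$.

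Since $\beta-f_y=a^\ast(\xi,y,t)$, the whole left side of \eqref{master-intro} becomes
\begin{equation*}
\Big[u_t+\tfrac12(a^\ast)^2u_{xx}+a^\ast(bu_x+u_{xy})+\tfrac12u_{yy}+bu_y\Big](\xi,y,t)-u_x\big(f_t+\mathcal L_2f+\mathcal L_4f+\tfrac12f_{yy}\big).
\end{equation*}
The bracket is zero because $a^\ast$ from \eqref{feedback-intro} attains the maximum in \eqref{HJB-intro}. The second term is zero by \eqref{f-MFG-eqn}.

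\textbf{Step 3: terminal condition and compatibility.} The terminal condition holds because $U(x,y,m,T)=u(x-C(m),y,T)=J(x-C(m))$. Likewise, \eqref{pi-intro} for $U$ reads $u_{xx}\beta=-(bu_x+u_{xy}-f_yu_{xx})$, which is exactly \eqref{pi-f} combined with \eqref{feedback-intro}. So $\pi^\ast$ is consistent with the master system.

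The main obstacle is the bookkeeping in Step 2. It requires expressing all measure derivatives of $U$ correctly, in particular the $\mathcal L_2$ term, where the product $f_m(z_1)f_m(z_2)u_{xx}$ produces $\tfrac12u_{xx}(\mathcal L_3f)^2$, and the $y$–$m$ cross derivatives in $\mathcal L_4U$. It then requires verifying that the quadratic terms combine into $\tfrac12(a^\ast)^2u_{xx}+a^\ast u_{xy}$. I will organize the check by first expanding in powers of $\pi^\ast$ and $\mathcal L_3f$ before substituting $\beta$.
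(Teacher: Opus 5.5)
Your proposal is correct and follows essentially the paper's own argument: you substitute $U=u(x-f,y,t)$ and note that the $u_{xx}$ and $u_{xy}$ terms collapse into $\tfrac12(\pi^\ast-\mathcal{L}_3f-f_y)^2u_{xx}+(\pi^\ast-\mathcal{L}_3f-f_y)(bu_x+u_{xy})$, which is the paper's identity $B=C^2$; you then use \eqref{pi-f} to identify this coefficient with $a^\ast$ so the HJB equation for $u$ removes it, and the leftover $-u_x(f_t+\mathcal{L}_2f+\mathcal{L}_4f+\tfrac12 f_{yy})$ vanishes by \eqref{f-MFG-eqn}. I checked your bookkeeping and it closes, and your sign $U_{mm}=-f_{mm}u_x+f_mf_mu_{xx}$ is the correct one.
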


\begin{proof}
Assuming that $U$ and $F$ satisfy  \eqref{U-u-f} and \eqref{f-MFG-eqn}, we first show \eqref{pi-f}.
\vs
Differentiating  (\ref{U-u-f}), where for simplicity  we omit the dependence of $U, U_t, U_x,\\
U_y, U_{xy}, U_{xx}, U_{yy}$ on 
$(x,y,m,t)$, of $U_m, U_{mx}, U_{my}$ on $(x,y,m,z,t)$, of $U_{mm}$ on $(x,y,m,z_1,z_2,t)$, of $f, f_y, f_{yy}$ on $\left( y,m,t\right) $, of $f_m$ on  $( y,m, z, t)$, of $f_{mm}$ on $(y,m,z_1,z_2,t)$ and of $u$ and its derivatives on  
$(x-f(y,m,t), y,t)$,  yields 
\bee
\begin{split}
&U_{t}=-f_{t}u_{x}+u_{t}, \  U_{x}=u_{x}, \ %
U_{xx} =u_{xx}, \ U_{y}=-f_{y}u_{x}+u_{y},\\[1.5mm]
&  U_{yy}=-f_{yy}u_{x}+\left( f_{y}\right)
^{2}u_{xx} -2f_{y}u_{xy}+u_{yy}, \ U_{xy}=-f_{y}u_{xx}+u_{xy}, \\[1.5mm]
& U_{m}=-f_{m}u_{x}, \ U_{xm}=-f_{m} u_{xx}, \  U_{ym}=-f_{ym}u_{x} +f_{m}f_{y}u_{xx}-f_{m}u_{xy}, \\[1.5mm]
& U_{mm}=f_{m} f_{m} u_{xx}  +f_{mm} u_{x}, 
\end{split}
\eee

%

Inserting the above in  the optimality condition (\ref{pi-intro}) we find \eqref{pi-f}.
\vs
Next we show that $U$ defined in (\ref{U-u-f}) indeed satisfies the
master equation \eqref{master-intro}, as long as (\ref{f-MFG-eqn}%
)\ and (\ref{pi-f}) hold, and all involved partial derivatives exist.
\vs
Inserting the above derivatives in  \eqref{master-intro} and grouping
terms we find  
\begin{equation}\label{U-eqn-aux}
-u_{x}A +u_{t}+\frac{1}{2}u_{xx} B +\left(b u_{x}+u_{xy}\right) C+\frac{1}{2}%
u_{yy}+bu_{y}=0, 
\ee
where $u$ is evaluated at $\left( x-f(y,m,t),y,t\right) $ and the auxiliary
quantities $A, B, C:\R\times \mathcal{P}_2 \times [0,T]\to \R$ are given by 
\bee
\begin{split}
&A(y,m,t)=f_{t}\left( y,m,t\right) + \\
&\frac{1}{2}\int \int \pi ^{\ast }(z_{1},y,m,t)\pi ^{\ast
}(z_{2},y,m,t)f_{mm}(y,m,z_{1},z_{2},t)dm(z_{1})dm(z_{2})\\
&+\frac{1}{2}\int \left( \pi ^{\ast }(z,y,m,t)\right)
^{2}f_{mz}(y,m,z,t)dm(z)\\
&+\int \pi ^{\ast }(z,y,m,t)f_{my}(y,m,z,t)dm(z)+\frac{1}{2}f_{yy}\left(
y,m,t\right) , 
\end{split}
\eee
\bee
\begin{split}
&B(x,y,m,t)=\left( \pi ^{\ast }(x,y,m,t)\right) ^{2}\\
&-2\pi ^{\ast
}(x,y,m,t)\int \pi ^{\ast }\left( z,y,m,t\right) f_{m}\left(
x,y,m,z,t\right) dm\left( z\right) \\
&-2\pi ^{\ast }(x,y,m,t)f_{y}( y,m,t) +\Big( \int \pi ^{\ast
}(z,y,m,t)f_{m}(y,m,z,t)dm(z)\Big)^{2}\\
&+2f_{y}\left( m,y,t\right) \int \pi ^{\ast
}(z,y,m,t)f_{m}(y,m,z,t)dm(z)+\left( f_{y}\left( y,m,t\right) \right) ^{2} 
\end{split}
\eee
and 
\bee
\begin{split}
&C(x,y,m,t)=\pi ^{\ast }(x,y,m,t)\\
&-\pi ^{\ast }(x,y,m,t)\int \pi ^{\ast }\left( z,y,m,t\right) f_{m}\left(
x,y,m,z,t\right) dm\left( z\right) -f_{y}\left( y,m,t\right) . 
\end{split}
\eee
It follows that  $B=C^{2}$ which, together with (\ref{pi-f}), yields 
\begin{equation}
\frac{1}{2}u_{xx}B+\left(b u_{x}+u_{xy}\right) C=-\frac{\left( bu_{x}+u_{xy}\right) ^{2}}{2u_{xx}}. 
\label{HJB-aux}
\end{equation}%

Combining the above and using the HJB equation \eqref{HJB-single} satisfied by $u$, we get that  
\bee
u_{t}+\frac{1}{2}u_{xx}B 
+b u_{x}+u_{xy} +\frac{1}{2}u_{yy}+bu_{y}=0%
\text{ \ in \ }\mathbb{R\times R\times }\left[ 0,T\right] . 
\eee
Thus,  (\ref{U-eqn-aux})\ reduces to 
\[
u_{x} A=0\text{ \  \ in }%
\mathbb{R\times R\times }\left[ 0,T\right],
\]%
and the  claim follows, since  $u_{x} >0$ in $\mathbb{R\times R\times }\left[ 0,T\right] $.

\end{proof}

%

\section{Representative examples}

We analyze two examples. The first considers exponential
utilities and general couplings that are functions of the law of peers'
wealth. In such settings, the mean field equilibrium control becomes
independent of $x$ and (\ref{f-MFG-eqn}) reduces to the autonomous (\ref{f-exponential}). 
The value of the game is given by (\ref%
{U-MFG-expo}). If, in addition, the coupling depends only on the mean of
peers' wealth, equation (\ref{f-exponential}) reduces further to (\ref%
{f-expo-mean}), for which we provide a complete analysis, establishing
existence, uniqueness and regularity. We, also, produce in closed-form the
mean field equilibrium optimal processes in (\ref{X-optimal-expo})\ and (\ref%
{pi-optimal-expo}).
\vs
The second example considers general utilities and couplings that are linear
functions of peers' wealth. In such settings, the value of the game is given
by (\ref{U-linear}) and the mean field equilibrium processes are represented
as (\ref{X-MFG-process}) and (\ref{pi-MFG-process-linear}). The optimal
processes admit an intuitively pleasing decomposition in terms of the
corresponding optimal processes in the singe agent problem with modified
initial condition, and an additional component that involves averages with
regards to the initial distribution of peers' wealth.
\vs

We recall (\ref{X-representative}), which at a mean field equilibrium
control $\pi ^{\ast }$,  becomes 
\begin{equation}
dX_{s}^{\ast }=b(Y_{s},s)\pi _{s}^{\ast }ds+\pi _{s}^{\ast }dW_{s} \text{ \ and 
\ }X_{t}^{\ast }=x, \label{MFG-optimal-wealth}
\end{equation}%
and we  denote by $\bar{X}^{\ast }$ its conditional on the common noise average, 
\begin{equation}
\bar{X}_{s}^{\ast }=\int zdm_{s}^{\ast }\left( z\right) =\mathbb{E}\left[
\left. X_{s}^{\ast }\right \vert \mathcal{F}_{s}^{W}\right]  \ \text{in} \ [t,T] \text{\ and  \ }%
X_{t}^{\ast }=x,\text{\ }\bar{X}_{t}^{\ast }=\bar{m}, \label{MFG-average}
\end{equation}
with $x,$ $\bar{m}\in \mathbb{R},$ and $m\in \mathcal{P}_{2}$ being the
initial distribution of the players and $m^{\ast }$ the conditional on $%
\mathcal{F}^{W}$ law of $X^{\ast }$.

\subsection{Exponential utility and general couplings on the law of peers'
wealth}
As in subsection~2.4, here we consider the utility function 
$J(x)=-e^{-x}$ and recall
%
that the value function and the optimal policy of
the single player problem are given, for each $(x,y,t)\in \mathbb{R\times R\times }\left[ 0,T\right]$, by 
\[
u(x,y,t)=-e^{-x+k(y,t)}\text{ \ and \ }a^{\ast }(x,y,t)=c(y,t), 
\]%
with the functions $k$ and $c$ as  in \eqref{c-k.2} and (\ref{c-k.1}
). 
\vs
Then, from (\ref{pi-f}), we deduce that 
\[
\pi ^{\ast }(x,y,m,t)-\int \pi ^{\ast
}(z,y,m,t)f_{m}(y,m,z,t)dm(z)=f_{y}\left( y,m,t\right) +c(y,t), 
\]%
which yields that the candidate optimal feedback control is independent of $%
x,$ that is,
\[
\pi ^{\ast }(x,y,m,t)=\pi ^{\ast }(y,m,t),
\]%
and, thus,  
\begin{equation}\label{pi-MFG-expo}
\pi ^{\ast }(y,m,t)=\frac{f_{y}\left( y,m,t\right) +c(y,t)}{1-\int
f_{m}(y,m,z,t)dm(z)}. 
\end{equation}%
Then  (\ref{f-MFG-eqn})\ simplifies to 
\bee
f_{t} + \mathcal{L}_2 f   +\mathcal{L}_4f
+\frac{1}{2}f_{yy} =0\text{ \ in \ }\mathbb{R\times }\mathcal{P}_{2}\mathcal{%
\times }\left[ 0,T\right]. 
\eee
Therefore, using (\ref{pi-MFG-expo}), we obtain the autonomous terminal value problem
\be\label{f-exponential}
\begin{split}
&f_{t} +
\dfrac{1}{2} a^2 \hat{ \mathcal{L}_2} (f)
+ a \hat{\mathcal{L}}_4 (f)
+\frac{1}{2}f_{yy} =0\text{\  \ in \ }\mathbb{R\times }%
\mathcal{P}_{2}\mathcal{\times }\left[ 0,T\right],\\ 
&f(y,m,T)=C(m),
\end{split}
\ee
where
\[a(y,m,t)= \dfrac{f_{y}(y,m,t)  
+c(y,t)}{1-\int f_{m}(y,m,z,t)dm(z)},\]
\[\hat{\mathcal{L}}_2(f)(y,m,t)= \int  \int 
f_{mm}(y,m,z_{1},z_{2},t)dm(z_{1})dm(z_{2}) + \int f_{mz}(y,m,z,t)dm(z),\]
and
\[\ds \hat{\mathcal{L}}_4(f)(y,m,t)=\int f_{ym}(y,m,z,t) dm(z).\]
\vs
The value of
the game is given, for each $(x,y,t) \in \R\times\R\times [0,T]$,   by 
\begin{equation}\label{U-MFG-expo}
U(x,y,m,t)=-e^{-(x-f(y,m,t))+k(y,t)}.%
\end{equation}%


\subsection{Exponential utility and general couplings on the mean of peer's
wealth}

We consider coupling functions $C:\mathcal{P}_2 \to \R$ that depend only on the mean of peers'
wealth, that is, 

\begin{equation}
C(m)=C\left( \bar{m}\right). 
\label{Coupling-mean}
\end{equation}%

The special case $C\left( \bar{m}\right) =\theta \bar{m},$ $\bar{m}\in 
\mathbb{R},$ with $\theta \in \left( 0,1\right) $, is the only one so far
analyzed in the literature under exponential utility. The general case $C(%
\bar{m})$ was recently solved in \cite{Souganidis-Z-MFG} for Black and
Scholes markets and, herein, we extend these results for the single factor
model (\ref{stock-reduced}) and (\ref{Y-reduced}).

\vs
As far as the coupling is concerned, we assume that $C:\mathbb{%
R\rightarrow R}$ satisfies (\ref{Coupling-mean}) with 
\be\label{Assumption-C}
\begin{array}{c}
C\in \mathcal{C}^{2}( \mathbb{R}),\ C(0)=0,\\
\text{ and, 
for some constants $k_{1},k_{2}, K, L>0$ and all $z\in \R$}, \\ 
k_{1}<1-C^{\prime }\left( z\right) <k_{2}\text{ \ and \ }\left( z-C\left(
z\right) \right) ^{(-1)}\leq Ke^{Lz^{2}}.%
\end{array}
\end{equation}%
We note that there is no assumption of monotonicity on $C,$ which is one
of the key assumptions in many references in the MFG\ literature.
\vs
In the sequel we provide a complete analysis of the value of the game,
the mean field equilibrium feedback control, and the optimal processes.
\vs
We
start with  (\ref{f-exponential})\ and find an $x-$independent smooth solution $f:\R\times \mathcal{P}_{2} \times [0,T]\to \R$
to \eqref{f-MFG-eqn}.
\vs
To this end, we use $c$  given by \eqref{c-k.1} 
and  introduce the auxiliary smooth function 
\begin{equation}
q(y,t)=\left \{ 
\begin{array}{c}
-\frac{1}{2}\int_{0}^{t}c_{y}\left( 0,s\right) ds+\int_{0}^{y}c(\rho
,t)d\rho \text{ \ in \ }(0,\infty)\mathbb{\times }\left[ 0,T\right], \\ 
\\
\hskip-.65in-\frac{1}{2}\int_{0}^{t}c_{y}\left( 0,s\right) ds \ \text{if } \ (y,t)\in \{0\} \times [0,T],\\
\\
\hskip.075in -\frac{1}{2}\int_{0}^{t}c_{y}\left( 0,s\right) ds-\int_{y}^{0}c(\rho
,t)d\rho \text{ \  \ in \  \ }(-\infty, 0)\mathbb{\times }\left[ 0,T\right]. %
\end{array}%
\right. \text{ \  \  \  \ }  \label{n(y,t)-aux}
\end{equation}%

\begin{proposition}
Assume \eqref{Coupling-mean} and \eqref{Assumption-C}. Then, \eqref%
{f-exponential} reduces to 
\begin{equation}\label{f-expo-mean}
\begin{split}
&f_{t} +\frac{1}{2}\left( \frac{f_{y} +c}{1-f_{\bar{m}}}\right) ^{2}f_{\bar{m}\bar{m}%
}
+\frac{f_{y}+c}{1-f_{\bar{m}}}%
f_{y\bar{m}}
+\frac{1}{2}f_{yy} =0%
\text{ \ in \ }\mathbb{R\times R}\mathcal{\times }\left[ 0,T\right),\\[1.5mm]
&f\left( y,\bar{m},T\right) =C\left( \bar{m}\right), 
\end{split}
\ee
which has a unique smooth solution given, for $(y,\bar m,t) \in \R\times\R\times [0,T]$, by 
\begin{equation}
f(y,\bar{m},t)=\bar{m}-\left( g\left( y,\cdot ,t\right) \right) ^{\left(
-1\right) }(y,\bar{m},t)-q(y,t),  \label{g-f-expo}
\end{equation}%
where $g$ solves, for $q$ as in (\ref{n(y,t)-aux}) and each $\bar{m}\in \mathbb{R}$, 
\begin{equation}\label{g-f-expo-heat}
\begin{cases}
&g_{t}+\frac{1}{2}g_{yy}=0\text{ \ in \ }\mathbb{R}\mathcal{\times }\left[
0,T\right),\\[1.5mm]
&g(y,\bar{m},T)=\left( \cdot -C(\cdot )-q(y,T),y,T\right) ^{\left( -1\right)
}\left( \bar{m},y,T\right). 
\end{cases}
\ee
Furthermore, 
\begin{equation} \label{1-f/m}
1-f_{\bar{m}}>0 \ \text{in} \  \mathbb{R\times R}\mathcal{%
\times }\left[ 0,T\right].
\end{equation}
\end{proposition}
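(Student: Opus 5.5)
We argue by a hodograph (inverse function) transformation in the $\bar m$ variable, mirroring the way $h$ and $h^{(-1)}$ were related in Propositions~2, 3 and~6.

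\emph{Step 1 (reduction).} First we reduce \eqref{f-exponential} to \eqref{f-expo-mean}. If $f(y,m,t)=f(y,\bar m,t)$, then the linear functional derivatives are
\[
f_m(y,m,z,t)=f_{\bar m}(y,\bar m,t),\quad f_{mm}(y,m,z_1,z_2,t)=f_{\bar m\bar m}(y,\bar m,t),\quad f_{ym}=f_{y\bar m}.
\]
Since $f_m$ does not depend on $z$, we also have $f_{mz}=0$. Using $\int \dd m=1$, we get $a=(f_y+c)/(1-f_{\bar m})$, $\hat{\mathcal L}_2 f=f_{\bar m\bar m}$ and $\hat{\mathcal L}_4 f=f_{y\bar m}$, which is \eqref{f-expo-mean}.

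\emph{Step 2 (the function $q$).} We record the properties of $q$ in \eqref{n(y,t)-aux}. Directly, $q_y=c$ and $q_{yy}=c_y$. Using \eqref{c-heat-eqn},
\[
q_t=-\tfrac12 c_y(0,t)+\int_0^y c_t\,\dd\rho=-\tfrac12 c_y(0,t)-\tfrac12\big(c_y(y,t)-c_y(0,t)\big)=-\tfrac12 c_y(y,t),
\]
so $q_t+\frac12 q_{yy}=0$ and $q$ is smooth.

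\emph{Step 3 (construction of $g$ and its monotonicity).} By \eqref{Assumption-C}, the map $\bar m\mapsto \bar m-C(\bar m)-q(y,T)$ has derivative in $(k_1,k_2)$. Hence it is a bijection of $\R$, and its inverse has derivative in $(1/k_2,1/k_1)$. The growth bound $(z-C(z))^{(-1)}\le Ke^{Lz^2}$, together with the linear growth of $q(\cdot,T)$ coming from the boundedness of $c$, makes the heat problem \eqref{g-f-expo-heat} well posed in a Gaussian growth class. (If $L$ is large this needs either $T<1/(2L)$ or an iteration in time; I would check this point.) The maximum principle applied to $g_{\bar m}$, which solves the same heat equation, gives $1/k_2\le g_{\bar m}\le 1/k_1$. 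Hence $g(y,\cdot,t)$ is invertible with a smooth inverse, and $f$ in \eqref{g-f-expo} is well defined and smooth.

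\emph{Step 4 (verification).} Let $p=g^{(-1)}$ in the second argument, so that $f=\bar m-p-q$. Then $1-f_{\bar m}=p_{\bar m}=1/g_{\bar m}(p,y,t)>0$, which is \eqref{1-f/m}. Moreover $f_y+c=-p_y-q_y+c=-p_y$, so
\[
a=-\frac{p_y}{p_{\bar m}}=g_y(p,y,t),
\]
exactly as $\tilde\alpha^\ast=h_y(h^{(-1)})$ in \eqref{a-h-explicit}. Substituting $f=\bar m-p-q$ and using Step 2, \eqref{f-expo-mean} becomes
\[
p_t+\tfrac12 g_y^2\,p_{\bar m\bar m}+g_y\,p_{y\bar m}+\tfrac12 p_{yy}=0 .
\]
This is precisely the identity derived in the proof of Proposition~6 from \eqref{m-1}--\eqref{m-4}, now with $h$ replaced by $g$, since $g$ solves the heat equation. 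The terminal condition holds because $p(y,\cdot,T)=\bar m-C(\bar m)-q(y,T)$.

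\emph{Step 5 (uniqueness).} Given any smooth solution $f$ of \eqref{f-expo-mean} with $1-f_{\bar m}>0$, set $\phi=\bar m-f-q$. Then $\phi$ is invertible in $\bar m$, and the same computation read backwards shows that its inverse solves \eqref{g-f-expo-heat}. Uniqueness for the heat equation in the growth class then forces the inverse to equal $g$, hence $f$ is the function in \eqref{g-f-expo}.

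\emph{Main obstacle.} I expect the main obstacle to be the growth and uniqueness class. The data grow like $e^{Lz^2}$, so one must make sure the heat solution exists on all of $[0,T]$, and one must specify the class of solutions of the quasilinear equation in which uniqueness is claimed (for instance $1-f_{\bar m}$ bounded above and below). The algebraic verification itself is routine given Proposition~6.
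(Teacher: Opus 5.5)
Your proposal is correct and follows essentially the paper's route. Both arguments shift by $q$ to remove $c$. Both then apply a hodograph transform in $\bar m$: the paper does this in two steps, $h=f+q$ and then $N=\bar m-h=g^{(-1)}$, while you merge them into $f=\bar m-g^{(-1)}-q$. This turns the quasilinear equation into the heat equation for $g$ exactly as in Proposition~6, the maximum principle gives $g_{\bar m}>0$, and your reversed-transformation uniqueness argument replaces the paper's bare appeal to uniqueness of viscosity solutions.

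The growth issue you flag does not arise. Since $k_1<1-C'<k_2$ and $C(0)=0$, the inverse $(\cdot-C(\cdot))^{(-1)}$ is Lipschitz with constant $1/k_1$, and $c\in[\theta_1,\theta_2]$. So the terminal datum of $g$ grows only linearly in $(y,\bar m)$, and the heat equation is solvable on all of $[0,T]$ with no restriction involving $L$. Also, the formulas in your Step~1 ($f_m=f_{\bar m}$, $f_{mz}=0$) are those of the intrinsic (Lions) derivative, which is the paper's convention, not of the linear functional derivative.
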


\begin{proof}
We show first that (\ref%
{f-expo-mean}) can be reduced to (\ref{h-expo-aux}) below, which corresponds
to the case $c(y,t)=0$ in $\mathbb{R}\mathcal{\times }\left[ 0,T\right] .$
Then, we construct a unique smooth solution to (\ref{h-expo-aux}) and define $\hat{%
f}$ as in (\ref{f-expo-aux}). In turn, we show that $\hat{f}$ solves
(\ref{f-expo-mean}) with $\hat f\left( y,\bar{m},T\right) =C(\bar{m})$,
and that it is smooth. Thus, by the uniqueness of viscosity solutions, it
coincides with $f.$
\vs

Let $q(y,t)$ be as in (\ref{n(y,t)-aux}), and consider the solution $h=h(y,\bar m,t)$ to 
\begin{equation}\label{h-expo-aux}
\begin{split}
&h_{t} +\frac{1}{2}\left( \frac{h_{y}}{1-h_{\bar{m}}}\right) ^{2}h_{\bar{m}\bar{m}} +\frac{h_{y}}{1-h_{\bar{m}}}h_{y\bar{m
}}+\frac{1}{2}h_{yy} =0 \text{ \ in \ }%
\mathbb{R\times R\times }\left[ 0,T\right],\\[1.5mm]
&h\left( y,\bar{m},T\right) =C(\bar{m})+q(y,T).
\end{split}
\end{equation}%
We assume for now that  \eqref{h-expo-aux} has a unique smooth solution, a fact that we will
verify at the end of the proof, and we introduce $\hat f:\mathbb{R\times R\times }\left[ 0,T\right] \to \R$ given by 
\begin{equation}
\hat{f}\left( y,\bar{m},t\right) =h\left( y,\bar{m},t\right) -q(y,t)\text{ \
in \ }\mathbb{R\times R\times }\left[ 0,T\right],   \label{f-expo-aux}
\end{equation}%
and claim that it solves (\ref{f-expo-mean}).

\vs
Since  $\hat f(y,\bar m,T)=C(\bar m)$ is obvious, we  show that 
it solves the pde in (\ref{f-expo-mean}).
\vs

We first recall that, since $c\left( y,t\right) $ solves (\ref%
{c-heat-eqn}), we have 
\[
\int_{0}^{y}c_{t}(\rho ,t)d\rho =-\frac{1}{2}c_{y}(y,t)+\frac{1}{2}c_{y}(0,t)%
\text{ \ in \ }(0,\infty)\mathcal{\times }\left[ 0,T\right] , 
\]%
and, similarly, 
\[
\int_{y}^{0}c_{t}(\rho ,t)d\rho =-\frac{1}{2}c_{y}(y,t)+\frac{1}{2}c_{y}(0,t)%
\text{ \ in \ }(-\infty,0)\mathcal{\times }\left[ 0,T\right] . 
\]%
Then, if $y\geq 0$, 
\bee
 \hat{f}_{t}=h_{t} +\frac{1}{2}c_{y}, \ \hat{f}_{\bar{m}} =h_{\bar{m}}, \ \hat{f}_{y} =h_{y} -c,  \  \hat{f}_{\bar{m}\bar{m}} = h_{\bar{m}\bar{m}}. 
\eee
Inserting  the above derivatives in (\ref{h-expo-aux}) yields that $\hat f$ solves \eqref{f-expo-mean} in $[0,\infty)\times \R\times [0,T)$.  A similar calculation yields the claim in $(-\infty,0]\times \R\times [0,T)$.
%
\vs
To  prove that $h$ is the unique smooth solution to the terminal
value problem (\ref{h-expo-aux}), 
we define, for each $(y,t)\in \R\times [0,T]$, the map $g(y,\cdot,t):\R\to \R$ by  
\[
g(y,\bar{m},t)=\left( \cdot -h\left( y,\cdot ,t\right) \right)
^{(-1)}\left( y,\bar{m},t\right) \text{\  \ in \ }\mathbb{R\times R\times }%
\left[ 0,T\right], 
\]%
and claim that, for each $\bar{m}\in \mathbb{R}$, $g(\cdot,\bar m,\cdot) $ is a solution to 
\begin{equation}
\begin{split}
&g_{t}+\frac{1}{2}g_{yy}=0\text{ \ in }\mathbb{R\times }\left[ 0,T\right),\\
& g\left( y,\bar{m},T\right) =\left( \cdot -F(\cdot )-p(y,T),y,T\right)
^{\left( -1\right) }\left( y,\bar{m},T\right).
\end{split}
\label{g-heat}
\end{equation}%
Since the terminal condition follows trivially, we only need to show that $g$ satisfies the equation in (%
\ref{g-heat}).
\vs
Let $N:\R\times\R\times [0,T]\to \R$ be defined as
\begin{equation}\label{N-h}
N(y,\bar{m},t)=\bar{m}-h(y,\bar{m},t).
\end{equation}%
Then, $N$ solves the terminal value problem 
\begin{equation}\label{N-equation}
\begin{split}
&N_{t} +\frac{1}{2}\left( \dfrac{N_{y}}{N_{\bar{m}}}\right) ^{2} N_{\bar{m}\bar{m}%
}
-\dfrac{N_{y}}{N_{\bar{m}}}%
N_{y\bar{m}} +\frac{1}{2}N_{\bar{m}\bar{m}} =0\text{ \  in \ }\mathbb{R\times }\R%
\mathcal{\times }\left[ 0,T\right),\\[2mm]
& N(y,\bar{m},T)=\bar{m}-F(\bar{m})+c(y,T).
\end{split}
\ee

The definitions of $h$ and $g$ imply   that $N\left( y,g(y,\bar{m}
,t),t\right) =\bar{m}$. In turn, 
%
evaluating equation (\ref{N-equation})\ at $g(y,%
\bar{m},t)$ and using the above yields%
\[
-\frac{g_{t}}{g_{\bar{m}}}+\frac{1}{2}\left( \left( g_{y}\right) ^{2}N_{_{%
\bar{m}\bar{m}}}+2g_{y}N_{_{\bar{m}}y}+N_{yy}\right) =-\frac{1}{g_{\bar{m}}}\left( g_{t}+\frac{1}{2}g_{yy}\right) =0. 
\]%

To conclude, we consider the terminal value problem (\ref{g-heat}%
) with $q(y,T)$ as in (\ref{n(y,t)-aux}) for $%
t=T.$ Then, $g$ is smooth and, furthermore, the maximum principle yields $g_{%
\bar{m}} >0$ in $\mathbb{R\times R}\mathcal{\times }%
\left[ 0,T\right] .$ Thus, $g$ is invertible in $\bar{m}$,  and its  inverse $%
g\left( y,\cdot ,t\right) ^{\left( -1\right) }\left( y,\bar{m},t\right)
=N\left( y,\bar{m},t\right) $ is well defined and smooth. In turn, (\ref{N-h}%
)\ yields that $h$ is well defined and smooth, and \eqref{g-f-expo} follows. 
\vs

The rest of the proof follows.


\end{proof}

Next, we construct  the mean field equilibrium processes, $%
X^{\ast }$ and $\pi ^{\ast }.$

\begin{proposition}
Let $f$ as in Proposition~12,  $k$ as in (\ref{c-k.2}) and
introduce the map $G: \mathbb{R\times R\times }\left[ 0,T\right]$ given  by
\begin{equation}
G(y,\bar{m},t)=\left( \cdot -f(y,\cdot ,t\right) )^{\left( -1\right) }\left(
y,\bar{m},t\right).
 \label{g-^}
\end{equation}%
The mean field value is given by 
\begin{equation}
U(x,y,m,t)=-e^{-(x-f(y,\bar{m},t))+k(y,t)}\text{ \  \ in \ }\mathbb{R\times
R\times }\left[ 0,T\right] .  \label{U-expo}
\end{equation}%
The mean field equilibrium process, $X^{\ast },$ is represented, for $0\leq t\leq s\leq T$, as 
\begin{equation}
X_{s}^{\ast }=x-f\left( y,\bar{m},t\right) +L_{t,s} +f\left( Y_{s},G(Y_{s},x-f\left( y,\bar{m},t\right) +L_{t,s},s),s\right), \label{X-optimal-expo}
\end{equation}%
with the process $L$ defined in (\ref{L-intro}). Alternatively,  for $0\leq t\leq s\leq T$,%
\begin{equation} \label{X-optimal-X-single-expo}
\begin{split}
X_{s}^{\ast }&=\mathcal{X}_{s}^{x-f\left( y,\bar{m},t\right) ,\ast }+f\left(
Y_{s},G(Y_{s},\mathcal{X}_{s}^{x-f\left( y,\bar{m},t\right) ,\ast },s\right)
,s)\\
& =\mathcal{X}_{s}^{x-f\left( \bar{m},y,t\right) ,\ast }+\mathcal{E}\left(
Y_{s},\mathcal{X}_{s}^{x-f\left( \bar{m},y,t\right) ,\ast },s\right),
\end{split}
\ee %
where, for  $(y,\bar{m},t) \in \mathbb{R\times R\times }\left[ 0,T\right]$, 
\begin{equation}
\mathcal{E}\left( y,\bar{m},t\right) =f(y,G(y,\bar{m},t),t) ,  \label{E-nonlinear}
\end{equation}%
and $\mathcal{X}^{x-f\left( y,\bar{m},y,t\right) ,\ast }$ is  the optimal
wealth in the single agent problem (\ref{u-DFN}), starting at $x-f\left( y,%
\bar{m},t\right) $; see  (\ref{x-thm}). 
\vs
Moreover, 
\[
\bar{X}_{s}^{\ast }=G(Y_{s},x-f\left( y,\bar{m},t\right) +L_{t,s},s)\text{ \
in \ }\mathbb{R\times R\times }\left[ t,T\right] ,
\]%
and the mean field equilibrium control process, $\pi ^{\ast },$ is given, $0\leq t\leq s\leq T$, by%
\begin{equation}\label{pi-optimal-expo}
\pi _{s}^{\ast }=\frac{f_{y}\left( Y_{s},\bar{X}_{s}^{\ast },s\right)
+c(Y_{s},s)}{1-f_{\bar{m}}\left( Y_{s},\bar{X}_{s}^{\ast },s\right) }.
\end{equation}
\end{proposition}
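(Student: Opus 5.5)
The value formula \eqref{U-expo} follows at once from Proposition~11 (the representation $U=u(x-f,y,t)$), with $u(x,y,t)=-e^{-x+k(y,t)}$ from subsection~2.4 and $f$ the smooth solution of \eqref{f-expo-mean} from Proposition~12. The regularity and \eqref{1-f/m} guaranteed there justify the formal computation. The equilibrium control in feedback form is \eqref{pi-MFG-expo}. Since $C$ depends only on $\bar m$, we have $\int f_m(y,m,z,t)\,dm(z)=f_{\bar m}(y,\bar m,t)$, so the feedback reads $\pi^*(y,\bar m,t)=(f_y+c)/(1-f_{\bar m})$. This is independent of $x$, and evaluating it along the state gives \eqref{pi-optimal-expo}, once we have identified the conditional mean $\bar X^*$.

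The plan is therefore to first determine $\bar X^*$ and then reconstruct $X^*$. Because $\pi^*$ does not depend on $x$, every player uses the same control. Averaging \eqref{MFG-optimal-wealth} over the initial law $m$ (conditionally on $\mathcal F^W$) shows that $\bar X^*$ satisfies the closed SDE
\[
d\bar X^*_s=\pi^*(Y_s,\bar X^*_s,s)\,\bigl(b(Y_s,s)\,ds+dW_s\bigr), \qquad \bar X^*_t=\bar m ,
\]
and that $X^*_s-\bar X^*_s=x-\bar m$ for all $s$.

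Set $N=\bar m-f(y,\bar m,t)$, so that $G=N^{(-1)}$ in the $\bar m$ variable; this is well defined by \eqref{1-f/m}. Apply It\^o's rule to $Z_s=N(Y_s,\bar X^*_s,s)=\bar X^*_s-f(Y_s,\bar X^*_s,s)$. The drift terms involving $f_t$, $\tfrac12\pi^{*2}f_{\bar m\bar m}$, $\pi^* f_{y\bar m}$ and $\tfrac12 f_{yy}$ cancel by \eqref{f-expo-mean}. The martingale and $b$-terms combine into
\[
\bigl(\pi^*(1-f_{\bar m})-f_y\bigr)(b\,ds+dW_s)=c\,(b\,ds+dW_s)=dL_{t,s}.
\]
Hence $Z_s=\bar m-f(y,\bar m,t)+L_{t,s}$, and inverting gives $\bar X^*_s=G(Y_s,\bar m-f(y,\bar m,t)+L_{t,s},s)$. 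Uniqueness of this SDE solution (so that the It\^o identification is legitimate) follows from the smoothness of $G$ together with local Lipschitz bounds, using the bounds on $1-C'$ in \eqref{Assumption-C} propagated to $1-f_{\bar m}$. This is the step I expect to be the main technical obstacle, and I would handle it as in the uniqueness argument of Proposition~3.

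For the individual wealth, note that
\[
X^*_s=\bar X^*_s+(x-\bar m)=\bigl(\bar X^*_s-f(Y_s,\bar X^*_s,s)\bigr)+f(Y_s,\bar X^*_s,s)+(x-\bar m).
\]
The first bracket equals $\bar m-f(y,\bar m,t)+L_{t,s}$, which yields the form \eqref{X-optimal-expo}, with the argument of $G$ written in terms of the representative agent's shifted initial condition $x-f$. A caveat: there is a slight discrepancy between $x$ and $\bar m$ inside $G$ in the stated formula. I would reconcile it by tracking the shift $x-\bar m$, which is constant in time, or by reading the formula for the representative agent with $x=\bar m$.

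Finally, by \eqref{X-a-optimal-expo-Merton}, $\mathcal X^{x-f,*}_s=x-f(y,\bar m,t)+L_{t,s}$. Substituting this gives \eqref{X-optimal-X-single-expo}, with $\mathcal E$ as in \eqref{E-nonlinear}.
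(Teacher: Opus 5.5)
Your proposal is correct and follows essentially the paper's proof: derive the common $x$-independent dynamics of $X^*$ and $\bar X^*$, apply It\^o's formula to $f(Y_s,\bar X^*_s,s)$, cancel the second-order terms with \eqref{f-expo-mean} to get $d\bigl(X^*_s-f(Y_s,\bar X^*_s,s)\bigr)=dL_{t,s}$, and invert $\bar m\mapsto\bar m-f(y,\bar m,t)$. Your caveat is justified, since the paper's own proof ends with $\bar X^*_s=G(Y_s,\bar m-f(y,\bar m,t)+L_{t,s},s)$, so the $x$ inside $G$ in the statement is correct only when $x=\bar m$ (the introduction's version, with $\int\mathcal X^{x-f,\ast}_s\,dm(x)$ inside $\mathcal E$, is the right one); and the uniqueness step you flag is not really an obstacle, because the same It\^o identity applied to any solution, together with the invertibility of $N$, already forces it to equal $G(Y_s,\bar m-f(y,\bar m,t)+L_{t,s},s)$.
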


\begin{proof}
From (\ref{pi-MFG-expo}) we have 
\[
\pi _{s}^{\ast }=\pi ^{\ast }\left( Y_{s},\bar{X}_{s}^{\ast },s\right) =%
\frac{f_{y}\left( Y_{s},\bar{X}_{s}^{\ast },s\right) +c(Y_{s},s)}{1-f_{\bar{m%
}}\left( Y_{s},\bar{X}_{s}^{\ast },s\right) },
\]%
with $\bar{X}_{t}^{\ast }=\bar{m},$ which is well defined in view of (%
\ref{1-f/m}). Using the above and (\ref{MFG-optimal-wealth}) yields, for $%
0\leq t\leq s\leq T,$ 
\[
dX_{s}^{\ast }=\frac{f_{y}\left( Y_{s},\bar{X}_{s}^{\ast },s\right)
+c(Y_{s},s)}{1-f_{\bar{m}}\left( Y_{s},\bar{X}_{s}^{\ast },s\right) }\left(
b(Y_{s},s)ds+dW_{s}\right) ,
\]%
with $X_{t}^{\ast }=x$ and, thus, 
\begin{equation} \label{X-average-dynamics}
d\bar{X}_{s}^{\ast }=\frac{f_{y}\left( Y_{s},\bar{X}_{s}^{\ast },s\right)
+c(Y_{s},s)}{1-f_{\bar{m}}\left( Y_{s},\bar{X}_{s}^{\ast },s\right) }\left(
b(Y_{s},s)ds+dW_{s}\right) \ \text{in} \ (t,T) \ \ \bar{X}_{t}^{\ast }=\bar{m}.\\
\end{equation}%
Applying It\^{o}'s formula to $f\left( Y_{s},%
\bar{X}_{s}^{\ast },,s\right) $ and using (\ref{f-expo-mean}) gives%
\bee
\dd f\left( Y_{s},\bar{X}_{s}^{\ast },s\right)=f_{\bar{m}}\left( Y_{s},\bar{X}_{s}^{\ast },s\right) d\bar{X}_{s}^{\ast}
+f_{y}\left( Y_{s},\bar{X}_{s}^{\ast },s\right) dY_{s}.
\eee

Using (\ref{X-average-dynamics}%
) and regrouping terms we obtain 
\[
\dd f\left( Y_{s},\bar{X}_{s}^{\ast },s\right) =b(Y_{s},s)\Big( \frac{%
f_{y}\left( \bar{X}_{s}^{\ast },Y_{s},s\right) +f_{\bar{m}}\left( \bar{X}%
_{s}^{\ast },Y_{s},s\right) c(Y_{s},s)}{1-f_{\bar{m}}\left( \bar{X}%
_{s}^{\ast },Y_{s},s\right) }\Big) ds
\]%
\[
+\frac{f_{y}\left( \bar{X}_{s}^{\ast },Y_{s},s\right) +f_{\bar{m}}\left( 
\bar{X}_{s}^{\ast },Y_{s},s\right) c(Y_{s},s)}{1-f_{\bar{m}}\left( \bar{X}%
_{s}^{\ast },Y_{s},s\right) }dW_{s}.
\]%
Therefore, with  the process $L$ as  in (\ref{L-intro}),%
\[
d(X_{s}^{x,\ast }-f\left( Y_{s},\bar{X}_{s}^{\ast },s\right) )=b(Y_{s},s)c(Y_{s},s)ds+c\left( Y,s\right) dW_{s}=dL_{t,s}.
\]%
It follows that 
\[
X_{s}^{\ast }-f\left( Y_{s},\bar{X}_{s}^{\ast },s\right) =x-f\left( y,\bar{m}%
,t\right) +L_{t,s},
\]%
and, in turn, 
\[
\bar{X}_{s}^{\ast }=G(Y_{s},\bar{m}-f\left( y,\bar{m},t\right) +L_{t,s},s).
\]%
To show (\ref{X-optimal-X-single-expo}), we use (\ref%
{E-nonlinear}) and recall (\ref{X-a-optimal-expo-Merton}).

\end{proof}

\subsection{General utility and couplings linear in the mean of peers' wealth%
}

We consider general utilities $J$ satisfying \eqref{utility} and linear couplings, that is, for each $m\in \mathcal{P}_2$ and 
$\theta \in (0,1)$, 
\begin{equation} \label{C-linear}
C(m)=\theta \bar{m}.
\end{equation}%
First, we observe that  
\begin{equation}
f\left( y,\bar{m},t\right) =\theta \bar{m}, 
 \label{f-linear}
\end{equation}%
is a smooth solution to (\ref{f-MFG-eqn}) and, since  $1-f_{\bar{m}}(y,m,t)=1-\theta >0$ in 
$\mathbb{R\times R\times }\left[ 0,T\right] ,$  it is the unique
smooth solution. 
\vs
Since  the optimality condition (\ref{pi-f}) reduces to%
\[
\pi ^{\ast }(x,y,m,t)-\theta \int \pi ^{\ast }(z,y,m,t)dm(z)=\alpha ^{\ast
}(x-\theta \bar{m},y,t), 
\]%
with $\alpha ^{\ast }(x,y,t)$ as in \eqref{a-R-thm}, it follows that 
\[
\int \pi ^{\ast }(z,y,m,t)dm(z)=\frac{1}{1-\theta }\int \alpha ^{\ast
}(z-\theta \bar{m},y,t)dm(z), 
\]%
and, thus, for each $(x,y,m,t) \in \R\times \R\times \mathcal{P}_2 \times \left[ 0,T\right]$,
\begin{equation}
\pi ^{\ast }\left( x,y,m,t\right) =\alpha ^{\ast }\left( x-\theta \bar{m},y,t\right) +\frac{\theta }{1-\theta }%
\int \alpha ^{\ast }\left( z-\theta \bar{m},y,t\right) dm\left( z\right). 
\label{pi-linear-feedback}
\end{equation}%
Then,   with $u$ as in (\ref{u-DFN}) and using  (\ref{U-u-f}) and (\ref{f-linear}), we obtain%
\begin{equation}
U(x,y,m,t)=u\left( x-\theta \bar{m},y,t\right).  \label{U-linear}
\end{equation}%

To construct the optimal mean field and portfolio processes, we note that it follows 
from (\ref{MFG-optimal-wealth}) and (\ref{pi-linear-feedback}), 
that $X^{\ast }$ solves 
\[
dX_{s}^{\ast }=b(Y_{s},s)\left( \alpha ^{\ast }(X_{s}^{\ast }-\theta \bar{X}%
_{s}^{\ast },Y_{s},s)+\frac{\theta }{1-\theta }\int \alpha ^{\ast }(z-\theta 
\bar{X}_{s}^{\ast },Y_{s},s)dm_{s}^{\ast }(z)\right) ds 
\]%
\[
+\left( \alpha ^{\ast }(X_{s}^{\ast }-\theta \bar{X}_{s}^{\ast },Y_{s},s)+%
\frac{\theta }{1-\theta }\int \alpha ^{\ast }(z-\theta \bar{X}_{s}^{\ast
},Y_{s},s)dm_{s}^{\ast }(z)\right) dW_{s},\text{ \ } 
\]%
with $X_{t}^{\ast }=x$ and $\bar{X}^{\ast }$ as in (\ref{MFG-average}).
Therefore,%
\[
d\bar{X}_{s}^{\ast }=\frac{1}{1-\theta }b(Y_{s},s)\left( \int \alpha ^{\ast
}(z-\theta \bar{X}_{s}^{\ast },Y_{s},s)dm_{s}^{\ast }(z)\right) ds 
\]%
\[
+\frac{\theta }{1-\theta }\left( \int \alpha ^{\ast }(z-\theta \bar{X}%
_{s}^{\ast },Y_{s},s)dm_{s}^{\ast }(z)\right) dW_{s},\text{ \ } 
\]%
with \ $\bar{X}_{t}^{\ast }=\bar{m}$. 
\vs
It follows that $X_{t}^{\ast }-\theta \bar{X}_{t}^{\ast }=x-\theta \bar{m}$ and, for $0\leq t\leq s\leq T$,%
\bee\begin{split}
&\dd \left( X_{s}^{\ast }-\theta \bar{X}_{s}^{\ast }\right) \\
&=b(Y_{s},s)\alpha ^{\ast }(X_{s}^{\ast }-\theta \bar{X}_{s}^{\ast
},Y_{s},s)ds+\alpha ^{\ast }(X_{s}^{\ast }-\theta \bar{X}_{s}^{\ast
},Y_{s},s)dW_{s}.
\end{split}
\eee

Working as in the proof in Theorem~7, we obtain that the sde above, which is
autonomous in the argument $X^{\ast }-\theta \bar{X}^{\ast },$ has a unique
strong solution, given by 
\begin{equation}
X_{s}^{\ast }-\theta \bar{X}_{s}^{\ast }=H\left( H^{\left( -1\right) }\left(
x-\theta \bar{m},y,t\right) +L_{t,s},Y_{s},s\right),
\label{X-X(ave)}
\end{equation}%
with the process $L$ as in (\ref{L-intro}) and the function $H$ solving (\ref%
{H-intro}).
\vs
Next, averaging with regards to $m^{\ast }$ and using the separability of
the initial condition $x-\theta \bar{m}$ from the common noise in the process $%
X^{\ast }-\theta \bar{X}^{\ast }$ above, we obtain, with  $m_{t}^{\ast }=m$, that 

\begin{equation}
\bar{X}_{s}^{\ast }=\frac{1}{1-\theta }\int H\left( H^{\left( -1\right)
}\left( x-\theta \bar{m},y,t\right) +L_{t,s},Y_{s},s\right) dm(x). 
\label{X-ave-linear}
\end{equation}%
The representation (\ref{x-thm})\ for the
optimal wealth in the single agent optimization problem (\ref{u-DFN}) implies that, for 
$s\in [t,T]$,
\begin{equation}
\int H\left( H^{\left( -1\right) }\left( x-\theta \bar{m},y,t\right)
+L_{t,s},Y_{s},s\right) dm(x) =\dint \mathcal{X}_{s}^{x-\theta \bar{m},\ast }dm(x).
\label{X-X(ave)-reduction}
\end{equation}%
Combining \eqref{X-X(ave)}, (\ref{X-ave-linear}) and (\ref{X-X(ave)-reduction}%
), we obtain that the mean field equilibrium wealth process is given by 
\bee
\begin{split}
X_{s}^{\ast }&=H\left( H^{\left( -1\right) }\left( x-\theta \bar{m}%
,y,t\right) +L_{t,s},Y_{s},s\right)\\
&+\frac{\theta }{1-\theta }\int H\left( H^{\left( -1\right) }\left( x-\theta 
\bar{m},y,t\right) +L_{t,s},Y_{s},s\right) dm(x). 
\end{split}
\eee
In other words, reinstating the appropriate initial conditions we have, for \\ 
$0\leq t\leq s\leq
T$, the
decomposition
\begin{equation}
X_{s}^{x,\ast }=\mathcal{X}_{s}^{x-\theta \bar{m},\ast }+\theta \bar{X}_{s}^{%
\bar{m},\ast }=\mathcal{X}_{s}^{x-\theta \bar{m},\ast }+\frac{\theta }{1-\theta }\dint 
\mathcal{X}_{s}^{x-\theta \bar{m},\ast }dm(x). \label{X-MFG-process}
\end{equation}%
To construct the associated mean field equilibrium control process, we
observe that, for $0\leq t\leq s\leq
T$, %
\begin{equation}
\begin{split}
&\pi _{s}^{\ast }=\pi ^{\ast }\left( X_{s}^{\ast },Y_{s},m_{s}^{\ast
},s\right)\\
&=\alpha ^{\ast }\left( X_{s}^{\ast }-\theta \bar{X}_{s}^{\ast
},Y_{s},s\right) +\frac{\theta }{1-\theta }\int \alpha ^{\ast }\left(
z-\theta \bar{X}_{s}^{\ast },Y_{s},s\right) dm_{s}^{\ast }\left( z\right).%
 \label{pi-MFG-process-linear}
\end{split}
\end{equation}%
From (\ref{X-X(ave)})\ and the second equality in (\ref{a-process-thm}), we
obtain 
\[
\alpha ^{\ast }\left( X_{s}^{\ast }-\theta \bar{X}_{s}^{\ast
},Y_{s},s\right) =\alpha ^{\ast }\left( H\left( H^{\left( -1\right) }\left(
x-\theta \bar{m},y,t\right) +L_{t,s},Y_{s},s\right) ,Y_{s},s\right) 
\]%
\[
=\alpha ^{\ast }\left( \mathcal{X}_{s}^{x-\theta \bar{m},\ast
},Y_{s},s,\right) =\alpha _{s}^{x-\theta \bar{m},\ast }, 
\]%
the optimal process of the single agent problem (\ref{u-DFN})
starting at $\left( x-\theta \bar{m},y,t\right) $. 
\vs
Thus, 
\bee
\int \alpha ^{\ast }\left( z-\theta \bar{X}_{s}^{\ast },Y_{s},s\right)
dm_{s}^{\ast }\left( z\right) =\int \alpha ^{\ast }\left( \mathcal{X}%
_{s}^{x-\theta \bar{m},\ast },Y_{s},s\right) dm(x)=\int \alpha
_{s}^{x-\theta \bar{m},\ast }dm(x),
\eee
%
%
%
and conclude that the mean field equilibrium policy is represented as 
\begin{equation}
\pi _{s}^{x,\ast }=\alpha _{s}^{x-\theta \bar{m},\ast }+\frac{\theta }{%
1-\theta }\int \alpha _{s}^{x-\theta \bar{m},\ast }dm(x).
\label{pi-MFG-process}
\end{equation}


We can interpret the above optimal processes as
follows. At time $t,$ the representative agent starts at $x,$ and splits it
to $x_{1}=x-\theta \bar{m}$ and to $x_{2}=\theta \bar{m}$. Using $x_{1},$
she follows the optimal policy $\alpha ^{x_{1},\ast }$ as in  (\ref%
{u-DFN}), which generates the process 
\[
X_{1,s}^{x_{1},\ast }=H\left( H^{\left( -1\right) }\left( x_{1},y,t\right)
+L_{t,s},Y_{s},s\right). 
\]%
With the remaining $x_{2},$ she follows policy $\frac{\theta }{1-\theta }%
\int \alpha _{s}^{x-\theta \bar{m},\ast }dm(x),$ obtaining%
\[
X_{2,s}^{x_{2},\ast }=\frac{\theta }{1-\theta }\int \alpha ^{\ast }\left(
H\left( H^{\left( -1\right) }\left( x_{1},y,t\right) +L_{t,s},Y_{s},s\right)
,Y_{s},s\right) dm(x) 
\]%
\[
=\frac{\theta }{1-\theta }\dint \mathcal{X}_{s}^{x-\theta \bar{m},\ast
}dm(x). 
\]%
We note that $\frac{\theta }{1-\theta }\int \alpha _{s}^{x-\theta \bar{m}%
,\ast }dm(x)$ is a feasible policy but not optimal.

\subsection{Separable payoffs and connection with indifference valuation and
arbitrage-free pricing}

We conclude by relating and interpreting the results for separable payoffs
using elements from indifference valuation and  arbitrage-free pricing. For the
reader's convenience, we recall the notion of indifference price which, to
avoid cumbersome notation, we present somewhat informally (we refer the
reader to \cite{carmo} and \cite{Musiela-Z-indifferencePricing} for a general overview of the area). 
\vs
Let $Z_{T}$ be a given claim, represented as an $\mathcal{F}_{T}^{Y}-$%
measurable random variable and introduced at $t$. Then, its so-called
writer's value function is defined as 
\[
w^{Z}(x,y,t)=\sup_{\mathcal{A}}\mathbb{E}_{\mathbb{P}}\left[ \left. J(%
\mathcal{X}_{T}-Z_{T})\right \vert \mathcal{X}_{t}=x,Y_{t}=y,\mathcal{F}%
_{t}^{Y}\right] , 
\]%
with $\mathcal{X}$ and $Y$ solving (\ref{takis2000}) and (\ref%
{Y-reduced}). 
\vs
At time $t,$ the indifference price of $Z_{T}$, denoted by $%
p_{t}(Z_{T}),$ is defined as the spatial input $p_{t}(Z_{T})\in \mathcal{F}%
_{t}^{Y}$ such that, a.s. in $\mathbb{R\times
R\times }\left[ 0,T\right]$, 
\[
w^{0}(x,y,t)=w^{Z}(x+p_{t}(Z_{T}),y,t).
\]%
In other words, $p_{t}(Z_{T})$ makes the agent "indifferent" between
(i) investing optimally in the underlying market without the claim ($%
Z_{T}\equiv 0)$ and (ii) receiving compensation $p_{t}(Z_{T})$ at initiation
time $t,$ investing optimally in the same market and rendering $Z_{T}$ at
expiry time $T$. 
\vs
We are also interested in the indifference price process $%
p_{s}\left( Z_{T}\right) ,$ $0\leq t\leq s\leq T,$ and the sde\ satisfied by
it, with $p_{T}(Z_{T})=Z_{T}.$
\vs
In the general case of incomplete markets, calculating the indifference
price process is a taunting task as the process $p_{s}(Z_{T})$ is, in
general, path-dependent and incorporates in a complicated way both the
market dynamics and the writer's wealth.
\vs
If, however, the market is complete, as the one herein, and under mild
integrability assumptions on $Z_{T},$ it can be shown that its indifference
price process coincides with its so-called arbitrage-free price,
which is given, for $0\leq t\leq s\leq T$, by the martingale \ 
\be\label{arbitrage}
p_{s}(Z_{T})=\mathbb{E}_{\mathbb{Q}}[\left. Z_{T}\right \vert \mathcal{F}%
_{s}^{Y}]. 
\ee

The measure $\mathbb{Q}$ is the unique, absolutely continuous to the
(physical) measure $\mathbb{P}$, that makes the (discounted) stock price a
martingale\footnote{%
The reader familiar with arbitrage-free pricing may see the connection
between the pricing measure $\mathbb{Q}$ and the one appearing in the
dynamics \eqref{X-Y-tilda} of the auxiliary problem (\ref{u-tilda}).}. It then follows, from the
martingale representation theorem and the definition of $\mathbb{Q},$ that we can 
obtain the decomposition 
\[
dp_{s}(Z_{T})=h_{s}dW_{s}^{\mathbb{Q}}=bh_{s}ds+h_{s}dW_{s} \ \text{for} \ s\in(t,T)
\ \text{ and} \ 
p_{T}(Z_{T})=Z_{T},\]%
with $W$ as in (\ref{W-innovations}) and some process $h$, known as the
"indifference hedge". 
\vs
Furthermore, it can be shown that the above sde\
also coincides with the actual under $\mathbb{P}$ dynamics that generate
claim $Z_{T},$ that is, we have  $p_{s}(Z_{T})=Z_{s},$ or
equivalently, if the process $Z$ solves 
\[
dZ_{s}=b(Y_{s},s)h_{s}ds+h_{s}dW_{s},\text{ }Z_{t}=z,\text{ \ }0\leq t\leq
s\leq T,
\]%
then 
\[p_{s}(Z_{T})=Z_{s} \ \text{ in} \ (t, T) \ \text{and} \ p_{t}(Z_{T})=z.\]
We may now draw analogies with the quantities we obtain in the MFG\ problem
for the separable case (\ref{payoff separable}).
Viewing, at the optimum,
the random variable $C\left( m_{T}^{\ast }\right) $ as a claim, the value
function $u$ in (\ref{u-DFN})\ as $w^{0}$ and inspecting equality (\ref%
{U-u-f}), we write, with some abuse of notation, %
\begin{equation}\label{U-u-indifference}
\begin{split}
u(x-&f(y,m,t),y,t)=\mathbb{E}\left[ \left. J(\mathcal{X}_{T}^{x-f(y,m,t),\ast
})\right \vert \mathcal{X}^\ast_{t}=x-f(y,m,t),Y_{t}=y\right]\\[1.5mm]
&=\mathbb{E}\left[ \left. J(X_{T}^{x,\ast }-C(m_{T}^{\ast })\right \vert
X^\ast_{t}=x,Y_{t}=y,m_{t}^{\ast }=m\right] =U(x,y,m,t).
\end{split}
\ee
In other words, we can think of   the mean field game value $U$ as the
value of the representative agent who starts with wealth $x$ and "writes" 
claim $C(m_{T}^{\ast })$. 
\vs
In turn, we interpret the mean field equilibrium optimal processes $X^{\ast }
$ and $\pi ^{\ast }$ as follows. At time $t,$ the representative agent
splits the initial $x$ into $x-f(y,m,t)$ and $f(y,m,t).$ With initial wealth 
$x-f\left( y,m,t\right) $, she implements the optimal policy $\alpha
^{x-f(y,m,t),\ast }$, given by (\ref{a-process-thm}) and rewritten below for
convenience,%
\bee\begin{split}
\alpha _{s}^{x-f(y,m,t),\ast
}&=c(Y_{s},s)H_{z}(H^{(-1)}(x-f(y,m,t),y,t)+L_{t,s},Y_{s},s)\\
&+H_{y}(H^{(-1)}(x-f(y,m,t),y,t)+L_{t,s},Y_{s},s).
\end{split}
\eee

This generates process $\mathcal{X}^{x-f(y,m,t),\ast}$ and, in particular, the random variable $\mathcal{X}_{T}^{x-f(y,m,t),\ast
}$ at terminal horizon $T$. 
\vs
In parallel, using the  initial wealth $f\left(
y,m,t\right) ,$ the representative agent follows strategy $C^{\ast }$ which
generates $C(m_{T}^{\ast })$ at $T.$ In other words, the "claim"\ $%
C(m_{T}^{\ast })$ admits the decomposition 
\[
C(m_{T}^{\ast })=f(y,m,t)+\int_{t}^{T}b(Y_{s},s)C_{s}^{\ast
}ds+\int_{t}^{T}C_{s}^{\ast }dW_{s}.
\]%
Therefore, the net wealth generated at $T$, starting at $x$, following
policies $\alpha^{x-f(y,m,t),\ast }$ and $C^{\ast },$ and
fulfilling the liability $C(m_{T}^{\ast })$ at  $T$ is given by%
\[
X_{T}^{x,\ast }-C(m_{T}^{\ast })
\]%
\[
=(x-f\left( y,m,t\right) )+f\left( y,m,t\right)
+\int_{t}^{T}b(Y_{s},s)\left( \alpha _{s}^{x-f(y,m,t),\ast }+C_{s}^{\ast
}\right) ds
\]%
\[
+\int_{t}^{T}(\alpha _{s}^{x-f(y,m,t),\ast }+C_{s}^{\ast
})dW_{s}-C(m_{T}^{\ast })=\mathcal{X}_{T}^{x-f(y,m,t),\ast },
\]
and this is precisely what the left hand side in (\ref{U-u-indifference})\
expresses. 
\vs

One may erroneously think that the above arguments trivialize the MFG\
problem, in the sense that one could "guess" (\ref{U-u-indifference}) from
starts and bypass the entire analysis. This is, of course, not correct
since, contrary to the traditional indifference/arbitrage-free valuation
problem, the claim in consideration $C(m_{T}^{\ast })$ is not a priori
given. Rather, $C(m_{T}^{\ast })$ is being created by the dynamic
interaction of the agents in $\left[ t,T\right] $ and has initial value $%
f(y,m,t)$ that needs to be found by solving equation (\ref{f-MFG-eqn}) and (%
\ref{pi-f}), or the autonomous equation (\ref{f-expo-mean}) in subsection~5.2.
\vs

Finally, we comment that a Feynma-Kac-type representation of the function $f$
solving \eqref{f-MFG-eqn} is consistent with the arbitrage-free price of the claim
$C(m^\ast_T)$ in analogy to \eqref{arbitrage}.

\section{Conclusions}

We revisited the classical optimal portfolio choice problem with partial
information and studied related mean field games of relative performance,
with couplings depending on the law of peers' wealth. For the former
problem, we introduced an alternative solution approach and produced
regularity and closed-form represenations for the value function and the
optimal processes, for general utilities. 
\vs
For the MFG games, we allowed for general dependence of the couplings on the
law of peers' wealth, and produced a master system, comprised by the
master equation and an optimality/compatibility condition of the mean field
equilibrium feedback policy. 
\vs

For both problems, we studied representative
cases and interpreted the corresponding solutions.   
\vs
There are several extensions of this work. Firstly, the methodology
developed herein may be extended to complete market models with general
factors, beyond reduced models generated from partial observations. Such
models incorporate, for example, local volatility, predictable returns, and
others, and have been studied by various authors for homothetic utilities;
see, among others,  \cite{Kim-Omberg}, \cite{Wachter-local} and \cite%
{Zariphopoulou-MOR} for homothetic utilities. 
\vs
Secondly, intermediate consumption and multi-assets may be, also,
incorporated. It is expected that for separable payoffs, analogous
connections with the single agent problem will hold, but the form of
equation (\ref{f-MFG-eqn}) is not obvious. 
\vs
Thirdly, the closed-form representations for the optimal wealth and
portfolio processes may be useful in studying the effects of collective
investment behavior on asset returns. For example, one may study the
so-called crowding risk (\cite{Baltas}, \cite{Barrosso et al}, \cite%
{Chincarini}) which arises when similar investment activity across many
agents ends up diluting value and performance.
\vs
In a different direction, one may study MFG\ with relative performance
defined in a semi-finite domain, for example in $[0,\infty ).$ In the
absence of competition, this case corresponds to non-negative wealth
constraints. Generally speaking, there are two distinct ways to incorporate
competition, multiplicatively and additively. The former was firstly modeled
in \cite{Lacker-Z.} for the case of power utilities, where the
effects of peer performance was formulated as the geometric average of the
players' wealth. For such couplings, the non-negativity constraint is
implicitly satisfied throughout, and the homotheticity of preferences
allowed for explicit solutions that led to random (contant in time)
equilibria. This specific case, power utilities and multiplicative-type
couplings, was subsequently studied by many authors, with representative
references being listed in the bibliography. However, the case of general
utility and/or multiplicative couplings remains open. If, on the other hand,
the competition is additive, as the case herein, one still expects a
solution to be as in \eqref{U-master-intro}, that is, 
\begin{equation}
U(x,t)=u(x-f(y,m,t),y,t)  \label{U-semi-definite}
\end{equation}%
where $u$ is the solution of the single agent problem in $\left[ 0,\infty
\right) \times \left[ 0,T\right] $ and $f>0$ solving an equation  like \eqref{f-intro}. 
However, the nonnegativity of the wealth argument in the
right-hand-side above will result in $U(x,t)$ defined in a time-evolving
(admissibility) domain of the form $x\geq f(y,m,t),$ $y\in \mathbb{R}$, $m\in \mathcal{P}_2$, %
and $t\in \left[ 0,T\right]$.  If the equation satisfied by $f$ has a unique,
smooth enough solution, the
admissibility domain can be fully characterized and representation \eqref
{U-semi-definite}  is expected to hold.
\vs
In both semi-definite domain cases, with multiplicative or additive
couplings, it would be interesting to study the emerging indifference
valuation problems which, however,  are expected to be non-standard. Indeed, it is not
clear what kind of "claim"\ multiplicative couplings would result to, at
mean field equilibrium. For the additive case, the difficulties are entirely
different, as questions related to "super-hedging", wealth range and the dynamic 
price of the coupling at the optimum may be challenging even in the complete market herein.
The authors are
currently working on both these directions.

\vs
Finally, incorporating idiosyncratic noise remains a challenging question,
as the linearization steps will not hold. The authors are currently working
on the exponential case and general couplings, allowing for individual
stocks in addition to the common one. 


\end{document}